\newtheorem{theorem}{Theorem}[section]
\newtheorem*{theorem*}{Theorem}
\newtheorem{lemma}[theorem]{Lemma}
\newtheorem*{lemma*}{Lemma}
\newtheorem{corollary}[theorem]{Corollary}
\newtheorem{proposition}[theorem]{Proposition}
\newtheorem{remark}[theorem]{Remark}
\newtheorem{definition}[theorem]{Definition}
\newcommand{\J}{\mathcal{J}}
\newcommand{\HH}{\mathcal{H}}
\newcommand{\s}{\mathcal{S}}
\newcommand{\A}{\mathcal{A}}
\newcommand{\B}{\mathcal{B}}
\newcommand{\V}{\mathcal{V}}
\newcommand{\W}{\mathcal{W}}
\newcommand{\T}{\mathcal{T}}
\newcommand{\C}{\mathbb{C}}
\newcommand{\AT}{\mathcal{A}_{tail}}
\newcommand{\p}{{\bf P}}
\begin{document}
\title{A noncommutative  De Finetti theorem for boolean independence}

\author{Weihua Liu}
\maketitle
\begin{abstract}
We introduce a family of quantum semigroups and its natural coactions on  noncommutative polynomials. We define three invariance conditions for the joint distribution of sequences of selfadjoint noncommutative random variables associated with these coactions. For one of the invariance conditions, we show that the joint distribution of an infinite sequence of noncommutative random variables  satisfies it is equivalent to the fact that the sequence of the random variables is  identically distributed and boolean independent with respect to the conditional expectation onto  its tail algebra. This is a boolean analogue of  de Finetti theorem on exchangeable sequences. In the end of the paper, we also discuss the other two invariance conditions which lead to some trivial results.
\end{abstract}

\section{Introduction}
In classical probability, the study of random variables with distributional symmetries was started by the pioneering work of de Finetti on 2-point valued random variables. One of the most general versions of de Finetti's work states that an infinite sequence of random variables, whose joint distribution is invariant under all finite permutations, is conditionally independent and identically distributed.  One can see e.g. \cite{Ka} for an exposition on the classical de Finetti theorem for more details. Also, see \cite{HS}, Hewitt and Savage considered the distributional symmetries of random variables which are distributed on $X=E\times E\times E\times\cdots,$ where E is a compact Hausdorff space.  Later, in \cite{Sto}, an early noncommutative version of de Finetti theorem was  given by St\o rmer. His work focused on exchangeable states on the infinite reduced tensor product of $C^*$-algebras.
Roughly speaking, in noncommutative probability theory,  St\o rmer studied  symmetric states on  commuting noncommutative random variables. Recently, in \cite{Ko}, without the commuting relation, K\"ostler studied exchangeable sequences of noncommutative random variables  in $W^*-$ probability spaces with normal faithful states. In classical probability, if the second moment of a real valued random variable is 0, then the random variable is 0 a.e.. Faithfulness is a natural generalization of this property in noncommutative probability, readers are refered to  \cite{VDN}. K\"ostler  showed that exchangeable sequences of random variables possess some kind of factorization property, but the exchangeability does not imply any kind of universal relation. In other words, we can not expect to determine mixed moments of an exchangeable sequence of random variables in Speicher's universal sense \cite{Sp}. By strengthening  \lq\lq exchangeability" to invariance under a coaction of the free quantum permutations, in \cite{KS}, K\"ostler and  Speicher discovered that the de Finetti theorem has a natural analogue in Voiculescu's free probability theory(see \cite{VDN}).  Here, free quantum permutations refer to Wang's quantum groups $\A_s(n)$ in \cite{Wan}.

K\"ostler and  Speicher's work starts a systematic study of the probabilistic symmetries on noncommutative probability theory. Most of the further projects are developed by Banica, Curran and Speicher, see \cite{BCS},\cite{Cu},\cite{Cu2}. They showed their de Finetti type theorems in both of the classical(commutative) probability theory and the noncommutative probability theory under the invariance conditions of easy groups and easy quantum groups, respectively. All these works in noncommutative case are proceeded under the assumption that the state of the probability space is faithful. This is a natural assumption in free probability theory,  because in \cite{Dy}, Dykema showed that the free product of a family of $W^*$-probability spaces with normal faithful states is also a $W^*$-probability space with a normal faithful state. Thus the category of $W^*$-probability spaces with faithful states is closed under the free product construction. Since the family of $W^*$-probability spaces with normal faithful states is a  part of $W^*$-probability spaces with normal faithful states, one may ask what happens to probability spaces with states which are not necessarily faithful. More specific, what is the de Finetti type theorem  for more general noncommutative probability spaces?

Recall that in the noncommutative realm, besides the freeness and the classical independence, there are many other kinds of independence relations, e.g.  monotone independence \cite{Mu}, boolean independence\cite{SW}, type B independence \cite{BGN}  and more recently  two-face  freeness for pairs of random variables\cite{Vo}. All these types of independence are associated with certain products on probability spaces.  Among these products, in \cite{Sp}, Speicher showed that there are only two universal products on the unital noncommutative probability spaces, namely the tensor product and the free product. The corresponding independent relations associated with these two universal products are the classical independence and the free independence.  It was also showed in \cite{Sp} that there is a unique  universal product in the non-unital framework which is  called  boolean product. This non-unital universal product  provides a way to construct probability spaces with non-faithful states from probability spaces with faithful states.  The more general noncommutative probability spaces will be defined in section 6 which are called noncommutative probability spaces with non-degenerated states. We would expect that boolean independence plays the same role in noncommutative probability spaces with non-degenerated states as the classical independence and the freeness play in commutative probability spaces and noncommutative probability spaces with faithful states, respectively. The main purpose of this work is to give certain distributional symmetries  which can characterize conditionally boolean independence in  de Finetti theorem's form.

To proceed this work, we will construct a class of quantum semigroups $B_s(n)$'s and its sub quantum semigroups $\B_s(n)$'s. Then, we can define a coaction of $B_s(n)$ on the set of noncommutative polynomials with $n$-indeterminants. Unlike $B_s(n)$, there are two natural ways to define coactions of $\B_s(n)$ on the set of noncommutative polynomials. The first way considers the set of noncommutative polynomials as a linear space, the coaction of $\B_s(n)$  defined on the linear space will be called the linear coaction of $\B_s(n)$ on the set of noncommutative polynomials. The second way defines the  coaction of $\B_s(n)$  by considering the set of noncommutative polynomials as an algebra, the coaction of $\B_s(n)$ defined as a coaction on the algebra will be called the algebraic coaction of $\B_s(n)$ on the set of noncommutative polynomials. With these three  coactions of the quantum semigroups on the set of noncommutative polynomials with $n$-indeterminants, we can describe three invariance conditions for the joint distribution of any sequence of $n$ random variables $(x_1,..,x_n).$ We will show that the invariance conditions determined by the algebraic coaction of $\B_s(n)$ and the coaction of $B_s(n)$ are so strong such that if the joint distribution of the sequence of $n$ random variables $(x_1,...,x_n)$ satisfies one of the invariance conditions, then $x_1=x_2=\cdots=x_n$ or $x_1=x_2=\cdots=x_n=0$,respectively. In this paper, we are mainly concerned with the invariance conditions which are determined by the linear coactions of the quantum semigroups $\B_s(n)$'s.  Before proving the main theorems, we will study tail algebras in $W^*$ probability spaces with non-degenerated normal states.  There will be a brief discussion on why we should consider these more general spaces. Unlike $W^*$ probability spaces with faithful normal states, we will define two kinds of  tail algebras, one contains the unit of the original algebra  and the other may not.  As  K\"ostler did in  \cite{Ko}, we will define our conditional expectation by taking the WOT limit of \lq\lq shifts". One of the differences between our work and K\"ostler' result is that our tail algebra may not contain the unit of the original algebra.  Then, we will prove the following theorem for the two different cases (tail algebra with the unit of the original algebra or not):

\begin{theorem}
Let $(\A,\phi)$ be a $W^*$-probability space and $(x_i)_{i\in\mathbb{N}}$ be an infinite sequence of selfadjoint random variables which generate $\A$ as a von Neumann algebra and the unit of $\A$ is (not) contained in the WOT closure of the non unital algebra generated by $(x_i)_{i\in\mathbb{N}}$ . Then the following are equivalent:
\begin{itemize}
\item[a)] The joint distribution of $(x_i)_{i\in \mathbb{N}}$  satisfies the invariance condition associated with the linear coactions of the quantum semigroups $\B_s(n)$'s.

\item[b)]  The sequence $(x_i)_{i\in\mathbb{N}}$ is identically distributed and boolean independent with respect to the $\phi-$preserving conditional expectation $E$ onto the non unital(unital) tail algebra of the $(x_i)_{i\in\mathbb{N}}$
\end{itemize}
\end{theorem}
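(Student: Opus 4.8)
The plan is to prove the equivalence by establishing the two implications separately, with $(b)\Rightarrow(a)$ being the more computational direction and $(a)\Rightarrow(b)$ carrying the genuine de Finetti content. Throughout I would work with the $\phi$-preserving conditional expectation $E$ onto the tail algebra $\T$ that was constructed earlier as a WOT limit of shifts, and I would treat the two cases (unital versus non-unital tail algebra) in parallel, since the combinatorial heart of the argument is insensitive to whether $1_{\A}$ lies in $\T$; the unitality issue only affects how $E$ and the constant term of a polynomial interact, and the earlier structural results on the two kinds of tail algebras are invoked precisely to guarantee that $E$ is well defined and $\phi$-preserving in each case.

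For $(b)\Rightarrow(a)$, I would assume that $(x_i)$ is identically distributed and boolean independent over $E$ and verify directly that the joint distribution is fixed by the linear coaction of every $\B_s(n)$. The key point is that boolean independence over $E$ together with identical distribution yields a closed formula for an arbitrary mixed moment $\phi(x_{i_1}\cdots x_{i_k})$ as a sum governed by the interval structure of the index tuple $(i_1,\dots,i_k)$. I would then check that applying the coaction $x_i\mapsto\sum_j x_j\otimes u_{ji}$ followed by $\phi\otimes\mathrm{id}$ reproduces the same scalar times the unit of $\B_s(n)$. The defining relations of $\B_s(n)$ (the boolean analogue of the magic-unitary relations) are exactly what is needed to collapse the resulting sum, so this direction reduces to a bookkeeping check on interval partitions.

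For $(a)\Rightarrow(b)$, I would first extract identical distribution by applying the invariance condition to single variables, so that the coaction forces $\phi(x_i^m)=\phi(x_j^m)$ for all $i,j,m$ and, more generally, equality of the individual distributions. The substantial step is to promote invariance into the factorization that defines boolean independence over $E$, namely $E(a_1 a_2\cdots a_n)=E(a_1)E(a_2)\cdots E(a_n)$ for alternating products in which $a_\ell$ is drawn from the algebra generated by $x_{i_\ell}$ and consecutive indices are distinct; equivalently, I would show that the mixed operator-valued boolean cumulants vanish. I would do this by using invariance to rewrite the relevant mixed moments of $(x_i)$, letting the quantum-semigroup generators reshuffle indices, and then passing to the WOT limit defining $E$, so that the shift-invariance built into $E$ converts the combinatorial identities supplied by $\B_s(n)$ into the interval (boolean) factorization. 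The same framework then upgrades identical distribution to identical distribution relative to $\T$.

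The main obstacle I anticipate is precisely this last translation in $(a)\Rightarrow(b)$. Because the linear coaction does not respect the algebra structure of the polynomials, I cannot simply push products of the $x_i$ through it and must instead argue monomial by monomial while controlling how the coaction interacts with the conditional expectation obtained as a limit of shifts. The non-faithfulness of $\phi$ and the possibility that $\T$ omits the unit of $\A$ mean that several cancellations which are automatic in the free or classical settings must be re-derived by hand, and it is here that ensuring the well-definedness and $\phi$-preservation of $E$ in the non-unital case will be doing the heaviest lifting.
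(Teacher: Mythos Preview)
Your outline is correct and tracks the paper's strategy in both directions: $(b)\Rightarrow(a)$ is exactly the interval-partition computation carried out in Section~5, and for $(a)\Rightarrow(b)$ the paper likewise builds $E$ as a WOT limit of shifts and proves the boolean factorization $E[p_1(x_{i_1})\cdots p_n(x_{i_n})]=E[p_1(x_{i_1})]\cdots E[p_n(x_{i_n})]$ directly, never passing through cumulants.

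The one place your sketch is too vague to stand as a plan is precisely the step you flag as the obstacle. Saying you will ``let the quantum-semigroup generators reshuffle indices'' does not by itself extract any scalar identity from an equation that lives in $\B_s(n)$; you need a mechanism to evaluate that equation. The paper's device is to apply the explicit finite-dimensional representations of $B_s(n)$ on $\C^{2M}$ built in Section~3 (the projections $P_{i,j}$ and $P$ with $PP_{i,j}P=\tfrac{1}{M}P$), plug these into the invariance condition, and let $M\to\infty$; the leading term forces $\phi(x_{i_1}^{k_1}\cdots x_{i_n}^{k_n})$ to depend only on the interval-partition type of $(i_1,\dots,i_n)$. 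That single lemma is what underlies both the extension of $E$ to a normal map on all of $\A$ and the eventual factorization, so it is the concrete idea your proposal is missing. A minor organizational point: the paper does not run the unital and non-unital cases in parallel as you suggest, but proves the non-unital case first and then deduces the unital case from it via the $C^*$-unitalization construction of Section~4.
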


One can see the definitions of $A_s(n)$ and $\B_s(n)$ in section 2 and 3 for details. It should be mentioned here that Wang's quantum permutation group $A_s(n)$ is a quotient algebra of $\B_s(n)$ for each $n$. Moreover, both of the invariance conditions associated with the linear coactions and the algebraic coactions of the quantum semigroups $\B_s(n)$'s are stronger than the invariance condition associated with the quantum permutations $A_s(n)$'s

The paper is organized as follows: In Section 2, we recall the basic definitions and notation from the noncommutative probability theory, Wang's quantum groups and exchangeable sequence of random variables. In Section 3, we introduce our quantum semigroup $B_s(n)$ and its sub quantum semigroups $\B_s(n)$. Then, we introduce  a linear coaction of the quantum semigroup $\B_s(n)$ on the set of the noncommutative polynomials. We will define an invariance condition associated with the linear coaction of $\B_s(n)$.  In section 4, we have a brief discussion on the relation between freeness and boolean independence. We show that operator valued boolean independence implies operator valued freeness in some special cases. In section 5, we prove that the joint distribution of a finite sequence of $n$ boolean independent operator valued random variables  are invariant under the linear coaction of $\B_s(n)$. In section 6, we recall the properties of the tail algebra of any infinite exchangeable sequences of noncommutative variables and study the properties of the tail algebra under the boolean exchangeable condition. In section 7, we will prove the main theorems and provide some examples. In section 8, we define a coaction of $B_s(n)$ and an algebraic coaction of $\B_s(n)$ on the set of noncommutative polynomials in $n$ indeterminants. Then, we define the invariance conditions associated with these coactions. We will study the set of random variables $(x_1,...,x_n)$ whose joint distribution satisfies one of these  invariance conditions.

\section{Preliminaries and Notation}
\subsection{Noncommutative probability space}
We recall some necessary definitions and notation of noncommutative probability spaces. For further details, see texts \cite{KS}, \cite{NS}, \cite{BPV}, \cite{VDN}.
\begin{definition}\normalfont  A non-commutative probability space $(\A,\phi)$ consists of a unital algebra $\A$ and a linear functional $\phi:\A\rightarrow \C$. $(\A,\phi)$ is  called a $*$-probability space if $\A$ is a $*$-algebra and $\phi(xx^*)\geq 0$ for all $x\in\A$. $(\A,\phi)$ is called a  $W^*$-probability space if $\A$ is a $W^*$-algebra and $\phi$ is a normal state on it. We say $(\A,\phi)$ is tracial if
$$\phi(xy)=\phi(yx),\,\,\,\,\,\forall x,y\in\A.$$
The elements of $\A$ are called  random variables. Let $x\in\A$ be a random variable, then its distribution is a linear functional $\mu_x$ on $\C[X]$( the algebra of complex polynomials in one variable), defined by $\mu_x(P)=\phi(P(x))$.
\end{definition}

Note that we do not require the state on $W^*$-probability space to be tracial. We will specify the probability spaces we concern in section 6 and section 8. 

\begin{definition}\normalfont Let $I$ be an index set, the algebra of noncommutative polynomials in $|I|$ variables, $\C\langle X_i|i\in I\rangle$, is the  linear span of $1$ and noncommutative monomials of the form $X^{k_1}_{i_1}X^{k_2}_{i_2}\cdots X^{k_n}_{i_n}$ with $i_1\neq i_2\neq\cdots \neq i_n\in I$ and all $k_j$'s are positive integers. For convenience we will use $\C\langle X_i|i\in I \rangle_0$ to denote the set of noncommutative polynomials without constant term.

Let $(x_i)_{i\in I}$ be a family of random variables in a noncommutative probability space $(\A,\phi)$. Their  joint distribution is a linear functional $\mu:\C\langle X_i|i\in I \rangle\rightarrow \C$ defined by
$$\mu(X^{k_1}_{i_1}X^{k_2}_{i_2}\cdots X^{k_n}_{i_n})=\phi(x^{k_1}_{i_1}x^{k_2}_{i_2}\cdots x^{k_n}_{i_n}),$$
and $\mu(1)=1$.
\end{definition}

\begin{remark}\normalfont
In general, the joint distribution depends on the order of the random variables, e.g. let $I=\{1,2\}$, then $\mu_{x_1,x_2}$ may not equal $\mu_{x_2,x_1}$. According to our notation, $\mu_{x_1,x_2}(X_1X_2)=\phi(x_1x_2)$, but $\mu_{x_2,x_1}(X_1X_2)=\phi(x_2x_1)$.
\end{remark}

\begin{definition}\normalfont Let $(\A,\phi)$ be a noncommutative probability space, a family of unital subalgebras $(\A_i)_{i\in I}$ is said to be free if
$$\phi(a_1\cdots a_n)=0,$$
whenever $a_k\in\A_{i_k}$, $i_1\neq i_2\neq\cdots\neq i_n$ and $\phi(a_k)=0$ for all $k$. Let $(x_i)_{i\in I}$ be a family of random variables and $\A_i$'s be  the unital subalgebras generated by $x_i$'s, respectively. We say the family of  random variables $(x_i)_{i\in I}$ is free if the family of unital subalgebras $(\A_i)_{i\in I}$ is free.
\end{definition}

\begin{definition}\normalfont Let  $(\A,\phi)$ be a noncommutative probability space, a family of (usually non-unital) subalgebras $\{\A_i|i\in I \}$ of $\A$ is said to be boolean independent if
$$\phi(x_1x_2\cdots x_n)= \phi(x_1)\phi(x_2)\cdots \phi(x_n)$$
whenever $x_k\in\A_{i(k)}$ with $i(1)\neq i(2)\neq\cdots \neq i(n)$. A set of random variables $\{x_i\in\A|i\in I\}$ is said to be boolean independent if the family of non-unital subalgebras $\A_i$, which are generated by $x_i$ respectively, is boolean independent.
\end{definition}
One refers to \cite{Fr} for more details of boolean product of random variables. Since the framework for boolean independence is a non-unital algebra in general, we will not require  our operator valued probability spaces to be unital: 
\begin{definition} \normalfont An operator valued probability space $(\A,\B, E:\A\rightarrow \B)$ consists of an algebra  $\A$, a subalgebra $\B$ of $\A$ and a $\B-\B$ bimodule linear map $E:\A\rightarrow \B$ i.e.
 $$E[b_1ab_2]=b_1E[a]b_2,$$
for all $b_1,b_2\in\B$ and $a\in\A$. According to the definition in \cite{St}, we call $E$ a conditional expectation from $\A$ to $\B$ if $E$ is onto, i.e. $E[\A]=\B$. The elements of $\A$ are called random variables.
\end{definition}
In operator valued free probability theory, $\A$ and $\B$ are unital and  have the same unit 

\begin{definition}\normalfont Given an algebra $\B$, we denote by $\B\langle X\rangle$  the algebra which is freely generated by $\B$ and the indeterminant $X$. Let $1_X$ be the identity of $\C\langle X\rangle$, then $\B\langle X\rangle$ is set of linear combinations of the elements in $\B$ and the noncommutative monomials $b_0Xb_1Xb_2\cdots b_{n-1}Xb_n$ where $b_k\in\B\cup \{\C 1_X\}$ and $n\geq 1$. The elements in  $\B\langle X\rangle$ are called $\B$-polynomials. In addition,  $\B\langle X\rangle_0$ denotes the subalgebra of $\B\langle X\rangle$ which doesn't contain the constant term i.e. the linear span of the noncommutative monomials $b_0Xb_1Xb_2\cdots b_{n-1}Xb_n$ where $b_k\in\B\cup \{\C 1_X\}$ and $n\geq 1$. $\B\langle X\rangle_0$.
\end{definition}

Given an operator valued probability space $(\A,\B,E:\A\rightarrow \B)$ such that $\A$ and $\B $ are unital. A family of unital subalgebras $\{\A_i\supset \B \}_{i\in I}$  is said to be freely independent with respect to $E$ if 
$$E[a_1\cdots a_n]=0,$$
whenever $i_1\neq i_2\neq \cdots\neq i_n$, $a_k\in \A_{i_k}$ and $E[a_k]=0$ for all $k$. A family of $ (x_i)_{i\in I}$ is said to be free independent over $\B$, if the unital subalgebras $\{\A_i\}_{i\in I}$ which are generated by $x_i$ and $B$ respectively is free, or equivalently 
$$E[p_1(x_{i_1})p_2(x_{i_2})\cdots p_n(x_{i_n})]=0,$$
whenever $i_1\neq i_2\neq \cdots\neq i_n$, $p_1,...,p_n\in \B\langle X\rangle$ and $E[p_k(x_{i_k})]=0$ for all $k$.\\
Let  $\{x_i\}_{i\in I}$  be a family of random variables in an operator valued probability space $(\A,\B,E:\A\rightarrow \B)$. $\A$, $\B$ are not necessarily unital.  $\{x_i\}_{i\in I}$ is  said to be boolean independent over $\B$ if for all $i_1,...,i_n\in I$, with $i_1\neq i_2\neq\cdots\neq i_n$ and all $\B$-valued polynomials $p_1,...,p_n\in\B\langle X\rangle_0$ such that
$$E[p_1(x_{i_1})p_2(x_{i_2})\cdots p_n(x_{i_n})]=E[p_1(x_{i_1})]E[p_2(x_{i_2})]\cdots E[p_n(a_{i_n})].$$

\subsection{Wang's quantum permutation groups }

In \cite{Wan}, Wang introduced the following quantum groups $A_s(n)$'s.
\begin{definition} \normalfont $A_s(n)$ is defined as the universal unital $C^*$-algebra generated by elements $u_{ij}$ $(i,j=1,\cdots n)$ such that we have
\begin{itemize}
\item each $u_{ij}$ is an orthogonal projection, i.e. $u_{ij}^*=u_{ij}=u_{ij}^2$ for all $i,j=1,...,n$.
\item the elements in each row and column of $u=(u_{ij})_{i,j=1,...,n}$ form a partition of unit, i.e. are orthogonal and sum up to 1: for each $i=1,\cdots,n$ and $k\neq l$ we have
$$
\begin{array}{rcl}
u_{ik}u_{il}=0&\text{and}& u_{ki}u_{li}=0;\\
\end{array}$$
and for each $i=1,\cdots, n$ we have
$$\sum\limits_{k=1}^nu_{ik}=1=\sum\limits_{k=1}^nu_{ki}.$$
\end{itemize}
\end{definition}

$A_s(n)$ is a compact quantum group in the sense of Woronowicz \cite{Wo2}, with comultiplication, counit and antipode given by the formulas:
$$\Delta u_{ij}=\sum\limits_{k=1}^n u_{ik}\otimes u_{kj}$$
$$\epsilon (u_{ij})=\delta_{ij} $$
$$S(u_{ij})=u_{ji}.$$
The right coaction of $A_s(n)$ on $\C\langle X_1,...,X_n\rangle$ is a linear map $\alpha:\C\langle X_1,...,X_n\rangle\rightarrow \C\langle X_1,...,X_n\rangle\otimes A_s(n)$ given by:
$$\alpha (X_{i_1}X_{i_2}\cdots X_{i_m})=\sum\limits_{j_1,...,j_m=1}^n X_{j_1}X_{j_2}\cdots X_{j_m}\otimes u_{j_1,i_1}u_{j_2,i_2}\cdots u_{j_m,i_m},$$
where $\otimes$ denotes the algebraic tensor product.

In the earlier papers, $\alpha$ is defined as an algebraic homomorphism. We emphasis on the 
linearity here because we will define some coactions of our quantum semigroups on  noncommutative polynomials in a similar way.  The right coaction has the following property:
$$(\alpha\otimes id)\alpha=(id\otimes \Delta)\alpha.$$

Let $(x_i)_{i\in\mathbb{N}}$ be an infinite sequence of random variables in a noncommutative probability space $(\A,\phi)$, the sequence is said to be quantum exchangeable if their joint distribution is invariant under Wang's quantum permutation groups, i.e. for all $n$, we have 
$$\mu_{x_1,...x_n}(p)1_{A_s(n)}=\mu_{x_1,...,x_n}\otimes id_{A_s(n)}(\alpha (p)),$$  
where $\mu_{x_1,...,x_n}$ is the joint distribution of $x_1,...,x_n$ with respect to $\phi$ and $p\in \C\langle X_1,...,X_n\rangle $. For example, if $p=X_{i_1}X_{i_2}\cdots X_{i_m}$, then the equation above can be written as:
$$
\begin{array}{rcl}
\phi(x_{i_1}x_{i_2}\cdots x_{i_m})1_{A_s(n)}&=&\mu_{x_1,...x_n}((X_{i_1}X_{i_2}\cdots X_{i_m})1_{A_s(n)}\\
&=&\mu_{x_1,...,x_n}\otimes id_{A_s(n)}(\sum\limits_{j_1,...,j_m=1}^n X_{j_1}X_{j_2}\cdots X_{j_m}\otimes u_{j_1,i_1}u_{j_2,i_2}\cdots u_{j_m,i_m})\\
&=&\sum\limits_{j_1,...,j_m=1}^n \phi(x_{j_1}x_{j_2}\cdots x_{j_m})u_{j_1,i_1}u_{j_2,i_2}\cdots u_{j_m,i_m},
\end{array}
$$
whenever $i_1\neq i_2\neq \cdots\neq i_n$.\\
 Let $S_n$ be the permutation group on $\{1,...,n\}$. The joint distribution of $(x_i)_{i\in\mathbb{N}}$ is said be exchangeable if for all $n$, $\sigma\in S_n$, we have 
$$\mu_{x_1,...x_n}=\mu_{x_{\sigma(1)},...,x_{\sigma(n)}},$$ 
where $\mu_{x_1,...,x_n}$ is the joint distribution of $x_1,...,x_n$ with respect to $\phi$ . It is showed in \cite{KS} that quantum exchangeability implies classical exchangeability.

\section{Quantum semigroups}
Our probabilistic symmetries will be given by the invariance conditions associated with certain coactions of our quantum semigroups.  First, we recall the related definitions  and notation of quantum semigroups.

A quantum space is an object of the category dual to the category of $C^*$-algebras(\cite{Wo}). For any $C^*$-algebras $A$ and $B$, the set of  morphisms Mor$(A,B)$ consists of all $C^*$-algebra homomorphisms acting from $A$ to $M(B)$, where $M(B)$ is the multiplier algebra of $B$, such that $\phi(A)B$ is dense in $B$. If $A$ and $B$ are unital $C^*$-algebras, then all unital $C^*$- homomorphisms from $A$ to $B$ are in Mor(A,B). In \cite{So}, 

\begin{definition}
By a quantum semigroup we mean a $C^*$-algebra $\A$ endowed with an additional structure  described by a morphism $\Delta\in Mor(\A,\A\otimes\A)$ such that
     $$(\Delta\otimes id_{\A})\Delta=(id_{\A}\otimes\Delta)\Delta.$$

\end{definition}
In other words, $\Delta$ defines a comultiplication on $\A$. Here the tensor product $\otimes $ denotes the minimal tensor product $\otimes_{min}$.\\
Now, we turn to introduce our quantum semigroups:

\noindent{\bf Quantum semigroups ($B_s(n)$, $\Delta$):} The algebra $B_s(n)$ is defined as the universal unital $C^*$-algebra generated by elements $u_{i,j}$ $(i,j=1,\cdots n)$ and a projection ${\bf P}$ such that we have
\begin{itemize}
\item each $u_{i,j}$ is an orthogonal projection, i.e. $u_{i,j}^*=u_{i,j}=u_{i,j}^2$ for all $i,j=1,\cdots, n$,
\item $$
\begin{array}{rcl}
u_{i,k}u_{i,l}=0&\text{and}& u_{k,i}u_{l,i}=0,\\
\end{array}
$$
whenever $k\neq l$
\item For all $1\leq i\leq n$,  ${\bf P}=\sum\limits_{k=1}^{n}u_{k,i}{\bf P}$.
\end{itemize}
We will denote the unite of $B_s(n)$ by $I$, the projection ${\bf P}$ is called the invariant projection of $B_s(n)$. \\
On this unital $C^*$-algebra, we can define a unital $C^*$-homomorphism $$\Delta:B_s(n)\rightarrow B_s(n)\otimes B_s(n)$$ by the following formulas:

$$\Delta u_{i,j}=\sum\limits_{k=1}^nu_{i,k}\otimes u_{k,j}$$
and $$\Delta{\bf P}={\bf P}\otimes {\bf P},\,\,\,\, \Delta I=I\otimes I.$$

We will see that $(B_s(n),\Delta)$ is a quantum semigroup. To show this we need to check that $\Delta$ defines a unital $C^*$-homomorphism from $B_s(n)$ to $B_s(n)\otimes B_s(n)$ and satisfies the comultiplication condition :\\
First, $\Delta u_{i,j}=\sum\limits_{k=1}^nu_{i,k}\otimes u_{k,j}$ is a projection because  $u_{i,k}$, $u_{k,j}$ are projections and $u_{i,k}u_{i,l}=0$ if $k\neq l$,  $u_{i,k}\otimes u_{k,j}$'s are orthogonal to each other. Also, $\Delta{\bf P}={\bf P}\otimes {\bf P}$ is a projection. Let $l\neq m$, then
$$
\begin{array}{rcl}
\Delta(u_{i,l})\Delta{u_{i,m}}&=&(\sum\limits_{k=1}^n u_{i,k}\otimes u_{k,l})(\sum\limits_{j=1}^nu_{i,j}\otimes u_{j,m})\\
&=&\sum\limits_{k,j=1}^nu_{i,k}u_{i,j}\otimes u_{k,l}u_{j,m}\\
&=&\sum\limits_{k=1}^nu_{i,k}\otimes u_{k,l}u_{k,m}\\
&=&0.\\
\end{array}
$$
The same, we have $ \Delta(u_{l,i})\Delta{u_{m,i}}=0$, for $m\neq l$. Moreover, we have  
$$
\begin{array}{rcl}
\Delta(\sum\limits_{l=1}^nu_{l,i})\Delta{\bf P}&=&(\sum\limits_{l,k=1}^nu_{l,k}\otimes u_{k,i}){\bf P}\otimes {\bf P}\\
&=&\sum\limits_{l,k=1}^nu_{l,k}{\bf P}\otimes u_{k,i}{\bf P}\\
&=&\sum\limits_{k=1}^n{\bf P}\otimes u_{k,i}{\bf P}\\
&=&{\bf P}\otimes {\bf P}.\\
\end{array}
$$
and
$\Delta$ sends the unit of $B_s(n)$ to the unit of $B_s(n)\otimes B_s(n)$.
Therefore, $\Delta$ defines a unital $C^*$-homomorphism on $B_s(n)$ by the universality of $B_s(n)$.\\

The comultiplication condition holds, because on the generators we have:
$$(\Delta\otimes id_{\A})\Delta u_{i,j}=\sum\limits_{k,l=1}^nu_{ik}\otimes u_{k,l}\otimes u_{l,j}=(id_{\A}\otimes\Delta)\Delta u_{i,j}$$
$$(\Delta\otimes id_{\A})\Delta{\bf P}={ \bf P}\otimes{\bf P}\otimes{\bf P}=(id_{\A}\otimes\Delta)\Delta{\bf P}$$
$$(\Delta\otimes id_{\A})\Delta I=I\otimes I\otimes I=(id_{\A}\otimes\Delta)\Delta I. $$
Therefore, $(B_s(n),\Delta)$ is a quantum semigroup.

\begin{remark}
If we let the invariant projection to  be  the identity, then we get Wang's free quantum permutation group. Therefore, $A_s(n)$ is a quotient $C^*$-algebra of $B_s(n)$, i.e. there exists a unital $C^*$-homomorphism $\beta:B_s(n)\rightarrow A_s(n)$ such that $\beta$ is surjective.
\end{remark}
\noindent Now, we provide some nontrivial representations of $B_s(n)$'s:\\
Let $\mathbb{C}^6$ be the standard  6-dimensional complex Hilbert space with orthonormal basis $v_1,...,v_6$. Let
$$
\begin{array}{ccc}
P_{11}=P_{v_1+v_2},\,\,\,&P_{21}=P_{v_3+v_4},\,\,\,&P_{13}=P_{v_5+v_6},\\
P_{21}=P_{v_3+v_6},\,\,\,&P_{22}=P_{v_5+v_2},\,\,\,&P_{23}=P_{v_1+v_4},\\
P_{31}=P_{v_4+v_5},\,\,\,&P_{32}=P_{v_1+v_6},\,\,\,&P_{33}=P_{v_2+v_3}.
\end{array}
$$
and ${P}=P_{v_1+v_2+v_3+v_4+v_5+v_6}$, where $P_v$ denotes the  one dimensional orthogonal projection onto the subspace spaned by $v$. Then the unital algebra generated by $P_{i,j}$ and $P$ gives a representation $\pi$ of $B_s(3)$ on $\C^6$ by the following formulas on the generators of $\B_s(3)$:
$$\pi(I)=I_{\C^6},\,\,\,\pi(u_{i,j})=P_{ij},\,\,\,\pi(\p)=P.$$
$\pi$ is well defined by the universality of $B_s(3).$\\
Moreover, the matrix form for $P_{1,1}$ and $P$ with respect to the basis are $$
P_{11}=1/2\left(
\begin{array}{llllll}
1&1&0&0&0&0\\
1&1&0&0&0&0\\
0&0&0&0&0&0\\
0&0&0&0&0&0\\
0&0&0&0&0&0\\
0&0&0&0&0&0
\end{array}
\right)
 \text{and }
P=1/6\left(
\begin{array}{llllll}
1&1&1&1&1&1\\
1&1&1&1&1&1\\
1&1&1&1&1&1\\
1&1&1&1&1&1\\
1&1&1&1&1&1\\
1&1&1&1&1&1
\end{array}
\right),$$
then we have
$$PP_{11}P=1/18\left(
\begin{array}{llllll}
1&1&1&1&1&1\\
1&1&1&1&1&1\\
1&1&1&1&1&1\\
1&1&1&1&1&1\\
1&1&1&1&1&1\\
1&1&1&1&1&1
\end{array}
\right)=1/3P.$$
In general,we have
\begin{lemma}\label{4.1}
Let $v_1,..v_{2n}$ be an orthonormal basis of the standard $2n-$dimensional Hilbert space $\mathbb{C}^{2n}$, and let $v_k=v_{k+2n}$ for all $k\in\mathbb{Z}$, let
$$P_{i,j}=P_{v_{2(i-j)+1}+v_{2(j-i)+2}},$$
where $P_v$ is the orthogonal projection the one dimensional subspace generated by the  vector $v$ and $P=P_{v_1+v_2+\cdots+v_{2n}}$, ${\bf 1}$ is the identity of $B(\C^{2n})$ . Then $\{P_{i,j}\}_{i,j=1,...,n}$, $P$ and ${\bf 1}$ satisfy the  defining conditions of the algebra $B_s(n)$,
\end{lemma}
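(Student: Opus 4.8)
The plan is to verify directly the three defining relations of $B_s(n)$ for the operators $P_{i,j}$, $P$ and $\mathbf{1}$, after which the universal-property argument already used for $B_s(3)$ applies verbatim to yield a representation of $B_s(n)$ on $\mathbb{C}^{2n}$. Throughout I would write $w_{i,j}:=v_{2(i-j)+1}+v_{2(j-i)+2}$ (indices read modulo $2n$ via $v_k=v_{k+2n}$) and $s:=v_1+v_2+\cdots+v_{2n}$, so that $P_{i,j}=P_{w_{i,j}}$ and $P=P_s$, with the convention $P_w=|w\rangle\langle w|/\|w\|^2$. The first relation, that each $P_{i,j}$ is an orthogonal projection, is immediate once one observes that the two basis vectors entering $w_{i,j}$ are genuinely distinct: the index $2(i-j)+1$ is odd while $2(j-i)+2$ is even, so they can never coincide modulo $2n$. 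Hence $w_{i,j}$ is a nonzero sum of two orthonormal vectors and $P_{w_{i,j}}$ is a bona fide rank-one projection.

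For the row and column orthogonality relations I would reduce $P_{i,k}P_{i,l}=0$ (resp. $P_{k,i}P_{l,i}=0$) to orthogonality of the defining vectors, since for rank-one projections $P_w P_{w'}=0$ exactly when $\langle w,w'\rangle=0$. It therefore suffices to show that $w_{i,k}$ and $w_{i,l}$ (resp. $w_{k,i}$ and $w_{l,i}$) share no common basis vector whenever $k\neq l$. Matching the odd indices forces $2(i-k)\equiv 2(i-l)\pmod{2n}$, i.e. $k\equiv l\pmod n$, and matching the even indices yields the same congruence; since $k,l\in\{1,\dots,n\}$ are distinct we have $k\not\equiv l\pmod n$, so no coincidence can occur and the products vanish. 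The column case is identical after interchanging the roles of the two arguments.

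The substantive step, and the one I expect to be the main obstacle, is the invariant-projection relation $\sum_{k=1}^n P_{k,i}P=P$ for each $i$. Writing $P_{k,i}=\tfrac12|w_{k,i}\rangle\langle w_{k,i}|$ and $P=\tfrac1{2n}|s\rangle\langle s|$, I would first note that $\langle w_{k,i},s\rangle=2$ for every $k$, since each of the two basis vectors in $w_{k,i}$ contributes $1$ to the inner product with the all-ones vector $s$; consequently $P_{k,i}P=\tfrac1{2n}|w_{k,i}\rangle\langle s|$. Summing, the whole relation collapses to the combinatorial identity $\sum_{k=1}^n w_{k,i}=s$, and this is precisely where the modular bookkeeping does the work: as $k$ runs over $\{1,\dots,n\}$ the quantity $k-i$ runs through a complete residue system modulo $n$, so the odd indices $2(k-i)+1$ exhaust $\{1,3,\dots,2n-1\}$ while simultaneously the even indices $2(i-k)+2$ exhaust $\{2,4,\dots,2n\}$. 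Hence $\sum_k w_{k,i}$ collects every basis vector exactly once and equals $s$, giving $\sum_k P_{k,i}P=\tfrac1{2n}|s\rangle\langle s|=P$. With the three relations in hand the verification is complete.
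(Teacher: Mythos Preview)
Your proof is correct and follows essentially the same approach as the paper's. The paper's argument is terser: it records the inner product $\langle w_{i,j},w_{i,k}\rangle=2\delta_{j,k}$ in one line, and for the invariant-projection relation it simply observes that $s=v_1+\cdots+v_{2n}$ lies in $\operatorname{span}\{w_{i,j}:j=1,\dots,n\}$, so that the sum of the mutually orthogonal rank-one projections fixes $s$; your explicit verification that $\sum_k w_{k,i}=s$ by the modular bookkeeping is exactly the content of that span claim, just unpacked.
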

\begin{proof}
It is easy to see that  the inner product
$$\langle v_{2(i-j)+1}+v_{2(j-i)+2},v_{2(i-k)+1}+v_{2(k-i)+2}\rangle=2\delta_{j,k},$$
so $P_{ik}P_{ij}=0$ if $j\neq k$. The same $P_{ki}P_{ji}=0$ if $k\neq j$. Fix $i$, we see that $v_1+v_2+\cdots+v_{2n}\in span\{v_{2(i-j)+1}+v_{2(j-i)+2}|j=1,...n\}$, so $\sum\limits_{k=1}^n P_{ik}P=P.$
\end{proof}
Therefore, by lemma \ref{4.1}, there exists a representation $\pi$ of $B_s(n)$ on $\C^{2n}$ which is defined by the following formulas:
$$\pi(1_{B_s(n)})={\bf 1},\,\,\,\, \pi(\p)=P$$
and $$\pi(u_{i,j})=P_{i,j},$$
for all $i,j=1,...,n$.

\noindent Now, we turn to introduce a sub  quantum semigroup of $(B_s(n), \Delta)$. Since $\p\neq I$ is a projection in $B_s(n)$, $\B_s(n)=\p B_s(n)\p$ is a $C^*$-algebra with identity $\p$ and generators $$ \{\p u_{i_1,j_1}\cdots u_{i_k,j_k}\p|i_1,j_1,...i_k,j_k\in\{1,...n\},k\geq0\}.$$
 If we restrict the  comultiplication $\Delta$ onto $\B_s(n)$, then we have
$$\Delta(\p u_{i_1,j_1}\cdots u_{i_k,j_k}\p)=(\p\otimes\p)(\sum\limits_{l_1,...l_k=1}^nu_{i_1,l_1}\cdots u_{i_k,l_k}\otimes u_{l_1,j_1}\cdots u_{l_k,j_k})(\p\otimes\p),$$
which is contained in $\B_s(n)\otimes\B_s(n)$. Therefore, $(\B_s(n),\Delta)$ is also a  quantum semigroup and $\p$ is the identity of $\B_s(n)$. We will call $\B_s(n)$ the boolean permutation quantum semigroup of $n$. 
\begin{remark}
If we require $\p u_{i,j}=u_{i,j}\p$ for all  $i,j=1,...,n$, then the universal algebra  we constructed in the above way is exactly Wang's quantum permutation group. Therefore, $A_s(n)$ is also a quotient algebra of $\B_s(n)$.
\end{remark}

In the following definition, $\otimes$ denotes the tensor product for  linear spaces:
\begin{definition}
Let $\s=(\A,\Delta)$ be a quantum semigroup and $\V$ be a complex vector space, by a (right) coaction of the quantum group $\s$ on $\V$ we mean a linear map $\L:\V\rightarrow\V\otimes \A$ such that
$$(\L\otimes id)\L=(id\otimes \Delta)\L.$$
We say a linear functional $\omega: \V\rightarrow\C $ is invariant under $\L$ if
$$(\omega\otimes id)\L(v)=\omega(v)I_{\A}, $$
where $I_{\A}$ is the identity of $\A$.\\
Given a complex vector space $\W$, We say a linear map $T:\V\rightarrow\W$ is invariant under $\L$ if
$$(T\otimes id)\L(v)=T(v)\otimes I_{\A}. $$
\end{definition}

\begin{remark}
This definition is about  coactions on linear spaces but not  coactions on algebras.
\end{remark}

Let $\C\langle X_1,...,X_n\rangle$ be the set of noncommutative polynomials in $n$ indeterminants, which is a linear space over $\C$ with basis $X_{i_1}\cdots X_{i_k}$ for all integer $k\geq0$ and $i_1,...,i_k\in\{1,...n\}$.\\
Now, we define a right coaction $\L_n$ of $\B_s(n)$ on $\C\langle X_1,...,X_n\rangle$ as follows:
$$\L_n(X_{i_1}\cdots X_{i_k})=\sum\limits_{j_1,...j_k=1}^nX_{j_1}\cdots X_{j_k}\otimes \p u_{j_1,i_1}\cdots u_{j_n,i_n}\p.$$
It is a well defined coaction of $\B_s(n)$ on $\C\langle X_1,...,X_n\rangle$, because:

$$
\begin{array}{rl}
&(\L_n\otimes id)\L_n(X_{i_1}\cdots X_{i_k})\\
=&(\L_n\otimes id)\sum\limits_{j_1,...j_k=1}^nX_{j_1}\cdots X_{j_k}\otimes \p u_{j_1,i_1}\cdots u_{j_n,i_n}\p\\

=&\sum\limits_{j_1,...j_k=1}^n\sum\limits_{l_1,...l_k=1}^nX_{l_1}\cdots X_{l_k}\otimes\p u_{l_1,j_1}\cdots u_{l_n,j_n}\p \otimes \p u_{j_1,i_1}\cdots u_{j_n,i_n}\p\\

=&\sum\limits_{l_1,...l_k=1}^n X_{l_1}\cdots X_{l_k}\otimes(\sum\limits_{j_1,...j_k=1}^n\p u_{l_1,j_1}\cdots u_{l_n,j_n}\p \otimes \p u_{j_1,i_1}\cdots u_{j_n,i_n}\p)\\

=&\sum\limits_{l_1,...l_k=1}^n X_{l_1}\cdots X_{l_k}\otimes(\Delta\p u_{l_1,i_1}\cdots u_{l_n,i_n}\p)\\

=&(id\otimes \Delta)\sum\limits_{j_1,...j_k=1}^nX_{l_1}\cdots X_{l_k}\otimes(\p u_{l_1,i_1}\cdots u_{l_n,i_n}\p)\\
=&(id\otimes \Delta)\L_n(X_{i_1}\cdots X_{i_k}).\\
\end{array}
$$

\noindent We will call $\L_n$ the linear coaction of $\B_s(n)$ on $\C\langle X_1,...,X_n\rangle.$ The algebraic coaction will  be defined in section 7.

\begin{lemma} 
Let $\L_n$ be the linear coaction of $\B_s(n)$ on $\C\langle X_1,...,X_n\rangle$, $\{u_{i,j}\}_{i,j=1,...n}$ and $\p$ be the standard generators of $\B_s(n)$. Then,
$$\L_n(p_1(X_{i_1})\cdots p_k(X_{i_k}))=\sum\limits_{j_1,...j_k=1}^n p_1(X_{j_1})\cdots  p_k(X_{j_k})\otimes \p u_{j_1,i_1}\cdots u_{j_k,i_k}\p,$$
for all $i_1\neq i_2\neq \cdots \neq i_k$ and $p_1,...p_k\in \C\langle X\rangle.$
\end{lemma}
\begin{proof}
Since the map is linear, it suffices to show that the equation holds by assuming  $p_l(X)=X^{t_l}$ where $t_l\geq 1 $ for all $l=1,...k$. Then, we have  
$$
\begin{array}{rcl}
& &\L_n(\underbrace{x_{i_1}\cdots x_{i_1}}_{t_1\,\text{ times}}\cdots \underbrace{x_{i_1}\cdots x_{i_k}}_{t_k\,\text{ times}})  \\
&=& \sum\limits_{j_{1,1},...j_{1,t_1},...,j_{k,1},...j_{k,t_k}=1}^n x_{j_{1,1}}\cdots x_{j_{1,t_1}}\cdots x_{j_{k,1}}\cdots x_{j_{k,t_k}}\otimes \p u_{j_{1,1}i_1}\cdots u_{j_{1,t_1}i_1}\cdots\p.
\end{array}
$$ 
Notice that $ u_{j_{m,s}i_m}u_{j_{m,s+1} i_m}=\delta_{j_{m,s},j_{m,s+1}}u_{j_{m,s}i_m}$, the right hand side of the above equation becomes
$$\sum\limits_{j_1,...,j_k=1}^n x_{j_1}^{t_1}\cdots x_{j_k}^{t_k}\otimes \p u_{j_1 i_1}\cdots u_{j_k,j_k}\p.$$
The proof is now completed
\end{proof}

We will be using  the following invariance condition to characterize conditionally boolean independence.

\begin{definition}
Let  $(\A,\phi)$ be a noncommutative probability space and $(x_i)_{i
\in \mathbb{N}}$ be an infinite sequence of random variables in $\A$, we say the joint distribution satisfies the invariance conditions associated with the linear coactions of the boolean quantum permutation semigroups $\B_s(n)$ if for all $n$, we have 
$$\mu_{x_1,...,x_n}(p)\p=\mu_{x_1,...,x_n}\otimes id_{\B_s(n)}(\L_n p ) $$
for all $p\in \C\langle X_1,...,X_n\rangle$, where $\mu_{x_1,...x_n}$ is the joint distribution of $x_1,...,x_n$.
\end{definition}

Let $\{\bar u_{ij}\}_{i,j=1,..,n}$ be the standard generators of $\A_s(n)$, and $\{ u_{ij}\}_{i,j=1,..,n}\cup\{\p\}$ be the standard generators of $B_s(n)$, then there exists  a $C^*$-homomorphism $\beta:B_s(n)\rightarrow\A_s(n)$ such that:
$$\beta(u_{ij})=\bar u_{ij},\,\,\,\,\,\beta(\p)=1_{\A_s(n)}.$$ 
The $C^*$-homomorphism is well defined because of the universality of $B_s(n)$. Let $p=X_{i_1}\cdots X_{i_k}\in \C\langle X_1,...,X_n\rangle$, then 
$$\mu_{x_1,...,x_n}(p)\p=\mu_{x_1,...,x_n}\otimes id_{\B_s(n)}(\L_n p )$$ implies
$$\mu_{x_1,...,x_n}(p)\p=\mu_{x_1,...,x_n}\otimes id_{\B_s(n)}(\L_n p ) $$
$$\mu_{x_1,...,x_n}(X_{i_1}\cdots X_{i_k})\p=\sum\limits_{j_1,...j_k=1}^n(\mu_{x_1,...,x_n}\otimes id_{\B_s(n)})(X_{j_1}\cdots X_{j_k}\otimes \p u_{j_1,i_1}\cdots u_{j_n,i_n}\p).$$
Now, apply $\beta$ on both sides of the above equation, we get
$$\mu_{x_1,...,x_n}(X_{i_1}\cdots X_{i_k})1_{\A_s(n)}=\sum\limits_{j_1,...j_k=1}^n(\mu_{x_1,...,x_n}\otimes id_{\A_s(n)})(X_{j_1}\cdots X_{j_k}\otimes \bar u_{j_1,i_1}\cdots \bar u_{j_n,i_n}),$$
which is the free quantum invariance condition. Since $p$ is arbitrary, we have the following:
\begin{proposition}
Let  $(\A,\phi)$ be a noncommutative probability space and $(x_i)_{i=1,...,n}$ be a sequence of random variables in $\A$, the joint distribution of $(x_i)_{i=1,...,n}$ is invariant under the free quantum permutations $\A_s(n)$ if it satisfies the invariance condition associated with the linear coaction of the boolean quantum permutation semigroup $\B_s(n)$.

\end{proposition}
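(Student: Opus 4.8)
The plan is to use the surjective $C^*$-homomorphism $\beta : B_s(n)\to\A_s(n)$ already constructed above, which sends $u_{i,j}\mapsto\bar u_{i,j}$ and collapses the invariant projection $\p$ to the unit $1_{\A_s(n)}$, and to show that it transports the linear coaction $\L_n$ of $\B_s(n)$ onto the coaction $\alpha$ of $\A_s(n)$. First I would record that $\beta$ is genuinely well defined: the generators $\bar u_{i,j}$ of $\A_s(n)$ are orthogonal projections with $\bar u_{i,k}\bar u_{i,l}=0=\bar u_{k,i}\bar u_{l,i}$ for $k\neq l$, and the column relation $\sum_{k}\bar u_{k,i}=1_{\A_s(n)}$ shows that the family $\{\bar u_{i,j}\}$ together with $1_{\A_s(n)}$ in the role of $\p$ satisfies every defining relation of $B_s(n)$; hence $\beta$ exists by the universality of $B_s(n)$. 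Since $\beta(\p)=1_{\A_s(n)}$, its restriction to the corner $\B_s(n)=\p B_s(n)\p$ is a unital $C^*$-homomorphism $\beta:\B_s(n)\to\A_s(n)$.

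The heart of the argument is the intertwining identity $(id\otimes\beta)\circ\L_n=\alpha$. Applying $id\otimes\beta$ to
$$\L_n(X_{i_1}\cdots X_{i_k})=\sum\limits_{j_1,\ldots,j_k=1}^n X_{j_1}\cdots X_{j_k}\otimes\p u_{j_1,i_1}\cdots u_{j_k,i_k}\p,$$
and using that $\beta$ is multiplicative with $\beta(\p)=1_{\A_s(n)}$, so that $\beta(\p u_{j_1,i_1}\cdots u_{j_k,i_k}\p)=\bar u_{j_1,i_1}\cdots\bar u_{j_k,i_k}$, reproduces exactly $\alpha(X_{i_1}\cdots X_{i_k})$. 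By linearity of $\L_n$, $\alpha$ and $\beta$ this identity holds for every $p\in\C\langle X_1,\ldots,X_n\rangle$.

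With these two observations the conclusion follows formally. Writing $\mu=\mu_{x_1,\ldots,x_n}$ and starting from the assumed $\B_s(n)$-invariance $\mu(p)\p=(\mu\otimes id_{\B_s(n)})(\L_n p)$, I would apply $\beta$ to both sides, which are elements of $\B_s(n)$. The left side becomes $\mu(p)\,1_{\A_s(n)}$, while the right side becomes
$$\beta\big((\mu\otimes id_{\B_s(n)})(\L_n p)\big)=(\mu\otimes id_{\A_s(n)})\big((id\otimes\beta)\L_n p\big)=(\mu\otimes id_{\A_s(n)})(\alpha p),$$
which is precisely the invariance condition under $\A_s(n)$. There is no serious obstacle here: the computation is purely formal once $\beta$ is in place. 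The only points demanding care are the verification of the defining relations needed for the universality argument and the bookkeeping that $\beta$ acts on the quantum-semigroup leg after the functional $\mu$ has been applied to the polynomial leg; it is enough to run everything on monomials $p=X_{i_1}\cdots X_{i_k}$ and extend by linearity.
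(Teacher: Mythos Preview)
Your proposal is correct and follows essentially the same route as the paper: construct the quotient map $\beta:B_s(n)\to A_s(n)$ sending $u_{i,j}\mapsto\bar u_{i,j}$ and $\p\mapsto 1_{A_s(n)}$, then apply $\beta$ to both sides of the $\B_s(n)$-invariance equation on monomials to obtain the $A_s(n)$-invariance. The paper's argument is slightly terser (it does not spell out the intertwining identity $(id\otimes\beta)\circ\L_n=\alpha$ explicitly), but the content is identical.
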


\section{Boolean independence and freeness}
In this section, we will show that operator valued boolean independent variables are  sometimes operator valued free independent. Therefore, we should not be surprised that the joint distribution of any sequence of identically boolean independent random variables is invariant under the coaction of the free quantum permutations. Especially, in section 7, operator valued boolean independent variables are  always operator valued free independent when we construct our conditional expectation in the  unital-tail algebra case. The properties are related to the $C^*-$ algebra unitalization. We provide a brief review here:\\
To every $C^*$ algebra $\A$ one can associate a unital $C^*$ algebra $\bar\A$ which contains $\A$ as a two-sided ideal and with the property that the quotient $C^*$-algebra $\bar\A/\A$ is isomorphic to $\C$. Actually, $\bar\A=\{x\bar I+a|x\in\C,a\in\A\}$, where $\bar I$ is the unit of $\bar\A$. We will denote $x\bar I+a$ by $(x,a)$ where $x\in\C$ and $a\in \A$, then we have 
$$(x,a)+(y,b)=(x+y,a+b),\,\,\,\,(x,a)(y,b)=(xy,ab+a+b),\,\,\,\,(x,a)^*=(\bar x,a^*).$$

 Let $(\A,\B,E)$ be an operator-valued probability space where $\A$ and $\B$ are not necessarily unital.  Let $\bar{\A}$ and $\bar{\B}$ be the unitalization defined above, then we can extend $\rho$ to $\bar{\rho}$ s.t $(\bar{\A},\bar{\B},\bar{E})$ is also an operator-valued probability space where $\bar{E}$ is a conditional expectation on $\bar{\A}$.\\
It is natural to define $\bar{E}$ as 
$$\bar{E}[x,a]=(x,E[a]).$$
$\bar{E}[(1,0)]=(1,0)$, so $\bar{E}$ is unital. The linear property is easy to check.\\
Take $(x_1,b_1), (x_2,b_2)\in\bar{\B}$ and $(y,a)\in\bar{\A}$, we have 
$$
\begin{array}{rcl}
\bar{E}[(x_1,b_1)(y,a)(x_2,b_2)]&=&\bar{E}[x_1yx_2,x_1x_2a+yx_2b+x_2b_1a+x_1b_2+yb_1b_2+b_1ab_2]\\
&=&(x_1yx_2,E[x_1x_2a+yx_2b+x_2b_1a+x_1b_2+yb_1b_2+b_1ab_2)]\\
&=&(x_1yx_2,x_1x_2E[a]+yx_2b+x_2b_1E[a]+x_1b_2+yb_1b_2+b_1E[a]b_2)\\
&=&(x_1,b_1)(y,E[a])(x_2,b_2)\\
&=&(x_1,b_1)\bar{E}[(y,a)](x_2,b_2).
\end{array}
$$
It is obvious that $\bar E^2=\bar E$. Hence, $\bar{E}$ is a $\bar{\B}$-$\bar\B$ bimodule from the unital algebra $\bar\A$ to the unital subalgebra $\bar\B$,  i.e. a conditional expectation.

\begin{proposition}
Let  $(\A,\B, E):\A\rightarrow \B$ be an operator valued probability space, $\{\A_i\}_{i\in I}$ be a $\B$-boolean independent family of sub-algebras and $\B\subset \A_i$ for all $i$. Then, in  the unitalization operator probability space $(\bar\A,\bar\B, \bar E)$,
 $\{\bar {\A}_i\}_{i\in I}$ is a $\bar\B$-free independent family of sub-algebras.
\end{proposition}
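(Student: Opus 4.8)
The plan is to verify the defining relation of $\bar\B$-free independence directly and to reduce it, through the structure of the unitalization, to the $\B$-boolean multiplicativity of $E$ already available in $\A$. First I would record the relevant structure of the unitalized space: writing elements of $\bar\A$ as pairs $(x,a)$ with $x\in\C$ and $a\in\A$, we have $\bar\A_i=\{(x,a):a\in\A_i\}$ and $\bar\B=\{(x,b):b\in\B\}$, and since $\B\subset\A_i$ the inclusion $\bar\B\subset\bar\A_i$ holds, with all the $\bar\A_i$ sharing the common unit $\bar I=(1,0)$. Thus free independence over $\bar\B$ is well posed for the family $\{\bar\A_i\}_{i\in I}$, and by definition it suffices to show that $\bar E[a_1\cdots a_n]=0$ whenever $i_1\neq i_2\neq\cdots\neq i_n$, $a_k\in\bar\A_{i_k}$, and $\bar E[a_k]=0$ for all $k$.

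The key reduction is the observation that centering with respect to $\bar E$ annihilates the scalar component. Writing $a_k=(\lambda_k,c_k)$ with $c_k\in\A_{i_k}$, the formula $\bar E[(x,a)]=(x,E[a])$ gives $\bar E[a_k]=(\lambda_k,E[c_k])$, so $\bar E[a_k]=0$ forces $\lambda_k=0$ and $E[c_k]=0$. Hence every $\bar E$-centered element of $\bar\A_{i_k}$ has the form $(0,c_k)$ with $c_k\in\A_{i_k}$ and $E[c_k]=0$; that is, the $\bar E$-centered elements of $\bar\A_{i_k}$ are exactly the images of the $E$-centered elements of $\A_{i_k}$, the adjoined unit being used up by the centering. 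Since $(0,c_1)(0,c_2)\cdots(0,c_n)=(0,c_1c_2\cdots c_n)$, the scalar parts never contribute to the product, so $\bar E[a_1\cdots a_n]=(0,E[c_1c_2\cdots c_n])$. In this way the free independence relation is reduced to the single assertion that $E[c_1c_2\cdots c_n]=0$ for alternating indices and $E$-centered $c_k\in\A_{i_k}$.

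Finally I would deduce this assertion from $\B$-boolean independence: because the $c_k$ lie in the boolean independent subalgebras $\A_{i_k}$ with alternating indices, the boolean multiplicativity of $E$ gives $E[c_1c_2\cdots c_n]=E[c_1]E[c_2]\cdots E[c_n]=0$, each factor vanishing by construction. The point requiring care, and the main obstacle, is that boolean independence is primitively formulated on the $\B\langle X\rangle_0$-polynomials in the generators, whereas the $c_k$ are arbitrary elements of $\A_{i_k}$ which, because $\B\subset\A_i$, carry genuine $\B$-valued constant parts. Decomposing $c_k=\gamma_k+p_k(x_{i_k})$ with $\gamma_k\in\B$ and $p_k\in\B\langle X\rangle_0$, one must expand the product and absorb each scalar factor $\gamma_k$ into an adjacent polynomial factor while tracking the alternation of indices (merging any two factors that land on a common index after absorption), so that the boolean factorization on $\B\langle X\rangle_0$ applies term by term and the whole sum collapses to $\prod_k E[c_k]$. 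This bookkeeping is precisely where the hypotheses $\B\subset\A_i$ and $E|_\B=\mathrm{id}$ (equivalently, the idempotence of $\bar E$ on $\bar\B$) are used: a $\B$-valued factor sitting between two factors of a common index must be read as an element of the algebra carrying the intermediate index, which is legitimate exactly because $\B$ is contained in every $\A_i$.
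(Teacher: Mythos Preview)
Your argument is correct and essentially identical to the paper's: both observe that $\bar E[(x,a)]=0$ forces $x=0$ and $E[a]=0$, multiply the resulting elements $(0,c_k)$ to obtain $(0,c_1\cdots c_n)$, and then invoke boolean independence to factor $E[c_1\cdots c_n]=E[c_1]\cdots E[c_n]=0$.

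Your final paragraph, however, manufactures an obstacle that is not present. The hypothesis is that the \emph{subalgebras} $\{\A_i\}$ are $\B$-boolean independent, and the paper (consistently with its scalar-valued Definition~2.5) reads this as the factorization $E[c_1\cdots c_n]=E[c_1]\cdots E[c_n]$ holding directly for arbitrary $c_k\in\A_{i_k}$ with alternating indices; there are no generators in the statement and no need to pass through $\B\langle X\rangle_0$. Your decomposition $c_k=\gamma_k+p_k(x_{i_k})$ and the ensuing bookkeeping are therefore unnecessary, and the paper's proof simply applies the factorization in one line.
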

\begin{proof}
Let $(x,a)\in \bar\A$, where $a\in \A$ and $x$ is a complex number, then $\bar E[(x,a)]=(x,E[a])$, thus $\bar E[(x,a)]=0$ iff $x=0$ and $E[a]=0$. \\
Now, we can check the freeness directly. Let $(x_k,a_k)\in\bar\A_{i_k}$, i.e $a_k\in \A_{i_k}$ and $x_i$'s are complex numbers, for $k=1,\cdots,n$ and $\bar E[x_k,a_k]=0$ and $i_1\neq i_2\neq\cdots\neq i_n$, then we have  $x_k=0$ for all $k=1,\cdots,n$ and 

$$
\begin{array}{rcl}
\bar{E}[(x_1,a_1)(x_2,a_2)\cdots(x_n,a_n)]&=&\bar{E}[(0,a_1)(0,a_2)\cdots(0,a_n)]\\
&=&\bar E[(0,a_1a_2\cdot a_n)]\\
&=&(0,E[a_1a_2\cdots a_n)]\\
&=&(0,E[a_1]E[a_2]\cdots E[a_n])\\
&=&(0,0)=0.
\end{array}
$$
and $\bar \B\subset\bar\A_i$ for all $i$.
\end{proof}

The examples for this proposition will be given in section 7.3. By checking the conditions for operator valued freeness directly as we did in the above theorem, we have  
\begin{corollary}
Let  $(\A,\B, E):\A\rightarrow \B$ be an operator valued probability space, $\{\B\subset\A_i\}_{i\in I}$ be a $\B$-free independent family of sub-algebras. Then, in
 their unitalization operator probability space $(\bar\A,\bar\B, \bar E)$,
 $\{\bar {\A}_i\}_{i\in I}$ is a $\bar\B$-free independent family of sub-algebras.
\end{corollary}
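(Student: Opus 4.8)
The plan is to run the argument of the preceding Proposition almost verbatim, the only change being which relation on $\A$ is invoked at the final step. First I would record the two structural facts that let the operator-valued freeness definition apply in the unitalized space: since $\B\subset\A_i$ for every $i$, we have $\bar\B\subset\bar\A_i$, and all the $\bar\A_i$ share the common unit $\bar I$ of $\bar\A$. Thus $\{\bar\A_i\}_{i\in I}$ is a family of unital subalgebras each containing $\bar\B$, so that it is meaningful to ask whether they are $\bar\B$-free and the defining relation of freeness can be tested on them.

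Next I would use the explicit description of $\bar E$. Writing a general element of $\bar\A_i$ as $(x,a)$ with $x\in\C$ and $a\in\A_i$, the formula $\bar E[(x,a)]=(x,E[a])$ shows that $\bar E[(x,a)]=0$ if and only if $x=0$ and $E[a]=0$. Hence, given $\bar a_k=(x_k,a_k)\in\bar\A_{i_k}$ with $\bar E[\bar a_k]=0$ and $i_1\neq i_2\neq\cdots\neq i_n$, each $\bar a_k$ is automatically of the centered, scalar-free form $(0,a_k)$ with $a_k\in\A_{i_k}$ and $E[a_k]=0$. Since $(0,a)(0,b)=(0,ab)$, the product collapses to $\bar a_1\cdots\bar a_n=(0,a_1a_2\cdots a_n)$, and therefore $\bar E[\bar a_1\cdots\bar a_n]=(0,E[a_1a_2\cdots a_n])$.

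The final step is where the hypothesis enters, and it is the one place where my argument diverges from the Proposition. There one used boolean independence to write $E[a_1\cdots a_n]=E[a_1]\cdots E[a_n]=0$; here I instead invoke the freeness of $\{\A_i\}_{i\in I}$ over $\B$ directly, since $a_k\in\A_{i_k}$, $E[a_k]=0$ and $i_1\neq\cdots\neq i_n$ are exactly the hypotheses of the defining relation of $\B$-freeness, which yields $E[a_1a_2\cdots a_n]=0$ at once. Consequently $\bar E[\bar a_1\cdots\bar a_n]=(0,0)=0$, which is precisely the freeness condition for $\{\bar\A_i\}_{i\in I}$ over $\bar\B$. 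I do not anticipate any genuine obstacle: the whole content is that the non-unital free relation on $\A$ is already stated in exactly the form needed to conclude freeness on $\bar\A$, so the only care required is the bookkeeping verifying that centering under $\bar E$ forces the scalar components to vanish and that products of such elements remain in the scalar-free part $\{0\}\times\A$.
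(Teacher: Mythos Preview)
Your proposal is correct and follows exactly the approach the paper indicates: the paper's own proof is simply the remark ``By checking the conditions for operator valued freeness directly as we did in the above theorem,'' and you have carried out precisely that direct check, replacing the boolean factorization step by the $\B$-freeness hypothesis at the end.
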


\section{operator valued Boolean random variables are invariant under Boolean quantum permutations}

Let $\B_s(n)$ be the boolean permutation quantum  semigroup of $n$ with standard generators $\{u_{i,j}\}_{i,j=1,\cdots,n}$ and $\p$.  In this section, we prove that the joint distribution of $n$ boolean independent operator valued random variables are invariant under the linear coactions of $\B_s(n)$. The following equality is the key to the proof of the statement:

\noindent Fix $k$ and $1\leq i_1,\cdots ,i_k\leq n$, we have
$$
\begin{array}{rcl}
&&\sum\limits_{j_1,\cdots,j_k=1}^{n}\p u_{i_1,j_1}\cdots u_{i_k,j_k}\p\\
&=&\sum\limits_{j_1,\cdots,j_{k-1}=1}^{n}\p u_{i_1,j_1}\cdots u_{i_{k-1},j_k-1}(\sum\limits_{j_{k}=1}^{n}u_{i_k,j_k}\p)\\
&=&\sum\limits_{j_1,\cdots,j_{k-1}=1}^{n}\p u_{i_1,j_1}\cdots u_{i_{k-1},j_k-1}\p\\
&=&\cdots=\p.\\

\end{array}
$$
According to the definition of $\B_s(n)$, it follows that the product $u_{i_1,j_1}\cdots u_{i_k,j_k}$ is not vanishing only if it satisfies that $i_t\neq i_{t+1}$ whenever $j_t\neq j_{t+1}$ for all $1\leq t\leq k-1$.

Given a set $S$, a collection of disjoint nonempty sets $P=\{V_i|i\in I\}$ is called a partition of $S$ if $\bigcup\limits_{i\in I}V_i=S$, $V_i\in P$ is called a block of the partition $P$. Let $S$ be a finite ordered set, then all the partitions of $S$ have finite blocks. A partition $P=\{V_1,\cdots V_r\}$ of $S$ is interval if there are no two distinct blocks $V_i$ and $V_j$ and elements $a, c\in V_i$ and $b, d\in V_j$ s.t. $a < b < c$ or $b<c < d$. An interval partition $P=\{W_s|1\leq s\leq r\}$ is ordered if  $a<b$ for all $a\in W_s$, $b\in W_t$ and $s<t$. We denote by $P_I(S)$ the collection of ordered interval partitions of $S$.

Let $I$ be an index set, $[k]=\{1,\cdots,k\}$ is an ordered set with the natural order. Let $I^k=I\times I\times\cdots\times I$ be the $k$-fold Cartesian product of the index set $I$. A sequence of indices $(i_m)_{m=1,\cdots,k}\in I^k $ is said to be compatible with an ordered interval partition $P=\{W_1,\cdots,W_r\}\in P_I([k])$ if $i_a=i_b$ whenever $a,b$ are in the same block and $i_a\neq i_b$ whenever $a,b$ are in two consecutive blocks, i.e. $W_s$ and $W_{s+1}$ for some $1\leq s\leq r$.  One should pay attention that $i_a=i_b$ is allowed for $a\in W_s$ and $b\in W_{s+2}$ for some $1\leq s\leq r$ 

Now, we define an equivalent relation $\sim_{P_I([k])}$ on $I^k$: two sequences of indices $$(i_m)_{m=1,\cdots,k}\sim_{P_I([k])}(j_m)_{m=1,\cdots,k}$$ if the two sequences are both compatible with an ordered interval partition $P\in P_I([k])$.\\

Let $\J=(i_m)_{m=1,\cdots,k},\J'=(j_m)_{m=1,\cdots,k}\in \{1,...,n\}^k$, we denote $\p u_{i_1,j_1}u_{i_2,j_2}\cdots u_{i_k,j_k}\p$ by $U_{\J,\J'}$.

\begin{lemma}
Fix $k\in \mathbb{N}$, let $\B_s(n)$ be the boolean permutation quantum  semigroup  with standard generators $\{u_{i,j}\}_{i,j=1,\cdots,n}$ and $\p$. Let $\J_1=(i_1,\cdots,i_k),\J_2=(j_1,\cdots,j_k)\in[n]^k$ be two sequences if indices. Then, the product $U_{J_1,\J_2}$ is not vanishing if $\J_1\sim_{P_I([k])}\J_2$
\end{lemma}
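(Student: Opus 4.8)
The plan is to reduce $U_{\J_1,\J_2}$ to a short ``reduced word'' and then to exhibit a single $*$-representation of $B_s(n)$ in which that reduced word has nonzero image; by the universal property of $B_s(n)$ this will force $U_{\J_1,\J_2}\neq 0$. First I would carry out the reduction. Since $\J_1\sim_{P_I([k])}\J_2$, fix a common compatible ordered interval partition $P=\{W_1,\dots,W_r\}$; on each block $W_s$ the sequence $\J_1$ is constant, say $a_s$, and $\J_2$ is constant, say $b_s$, and compatibility forces $a_s\neq a_{s+1}$, $b_s\neq b_{s+1}$. Because the blocks of an interval partition are genuine intervals, the letters of $\p u_{i_1,j_1}\cdots u_{i_k,j_k}\p$ coming from one block are adjacent and all equal to $u_{a_s,b_s}$; using $u_{a_s,b_s}^2=u_{a_s,b_s}$ they collapse, so
$$U_{\J_1,\J_2}=\p\, u_{a_1,b_1}u_{a_2,b_2}\cdots u_{a_r,b_r}\,\p,$$
a word in which consecutive factors differ in \emph{both} the row and the column index. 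It therefore suffices to show this reduced word is nonzero.

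Next I would build the representation. I look for unit vectors $\{\xi_{a,b}\}_{a,b=1}^n$ and $\eta$ in a Hilbert space $H$ such that (i) for each fixed row $a$ the family $\{\xi_{a,b}\}_b$ is orthonormal, (ii) for each fixed column $b$ the family $\{\xi_{a,b}\}_a$ is orthonormal, and (iii) $\eta$ lies in $V_b:=\mathrm{span}\{\xi_{a,b}:a\}$ for every $b$. Setting $\rho(u_{a,b})=P_{\xi_{a,b}}$ (rank-one projection onto $\C\xi_{a,b}$) and $\rho(\p)=P_\eta$, condition (i) gives the row relations $u_{a,b}u_{a,b'}=0$, (ii) the column relations, and (iii) gives $\sum_a\rho(u_{a,b})\rho(\p)=P_{V_b}P_\eta=P_\eta=\rho(\p)$; hence all defining relations of $B_s(n)$ hold and $\rho$ extends to a $*$-representation. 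In it the reduced word telescopes, as a product of rank-one projections, to
$$\rho\big(\p u_{a_1,b_1}\cdots u_{a_r,b_r}\p\big)=\langle\eta,\xi_{a_1,b_1}\rangle\Big(\prod_{s=1}^{r-1}\langle\xi_{a_s,b_s},\xi_{a_{s+1},b_{s+1}}\rangle\Big)\langle\xi_{a_r,b_r},\eta\rangle\, P_\eta,$$
which is nonzero precisely when each of these $r+1$ inner products is nonzero.

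The heart of the argument, and the step I expect to be the main obstacle, is producing a configuration $(\{\xi_{a,b}\},\eta)$ obeying (i)--(iii) in which the overlaps appearing above are all nonzero. These are exactly the overlaps \emph{not} forced to vanish by (i)--(ii), since consecutive letters of the reduced word share neither a row nor a column. The danger is that if $\dim H=n$ the columns become complete orthonormal bases, and then column-orthogonality rigidly links different columns (for instance, for $n=3$ no zero-diagonal unitary can have all off-diagonal entries nonzero), which can force the very overlaps I need to be zero. I would circumvent this by taking $\dim H$ large, so each column spans only an $n$-dimensional $V_b\ni\eta$ and distinct $V_b$ meet generically along $\C\eta$ alone; the row-orthogonality equations on the components of $\xi_{a,b}$ orthogonal to $\eta$ can then be solved with every $\eta$-component nonzero, and a generic solution makes all required overlaps nonzero. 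I would finish either by a dimension/transversality count showing the ``good'' configurations are dense in the solution variety of (i)--(iii), or by exhibiting one explicit such configuration. Carrying out this existence argument cleanly is the only nontrivial point; everything else is the bookkeeping above.
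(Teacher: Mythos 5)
You have proved the converse of the implication this lemma actually asserts. Despite the literal wording (``not vanishing if''), the intended statement is ``not vanishing \emph{only} if'': this is how it is phrased in the sentence preceding the lemma, it is what the paper's own proof establishes, and it is the direction needed in Theorem \ref{4.5}, where the sum $\sum_{j_1,\dots,j_k} E[x_{j_1}b_1\cdots b_{k-1}x_{j_k}]\otimes \p u_{i_1,j_1}\cdots u_{i_k,j_k}\p$ is restricted to indices with $(j_s)\sim_{P_I}(i_t)$. That restriction requires precisely $\J_1\not\sim_{P_I([k])}\J_2\Rightarrow U_{\J_1,\J_2}=0$, which your proposal never addresses. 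The paper's proof of this is elementary: each sequence is compatible with a \emph{unique} ordered interval partition (its maximal constant runs), so $\J_1\not\sim\J_2$ means these two partitions differ; looking at the first block where they disagree produces a position $q$ with $i_q\neq i_{q+1}$ but $j_q=j_{q+1}$ (or symmetrically), and then the adjacent pair $u_{i_q,j_q}u_{i_{q+1},j_{q+1}}$ is killed by the relation $u_{k,i}u_{l,i}=0$ for $k\neq l$. Knowing that compatible pairs yield nonvanishing words gives no license to discard the incompatible terms from the sum, so the proposal does not prove the lemma as used.

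Moreover, even judged as a proof of the converse (which the paper nowhere needs), your argument is incomplete at exactly the point you flag: the existence of a configuration $(\{\xi_{a,b}\},\eta)$ satisfying (i)--(iii) with all $r+1$ overlaps nonzero is delegated to an unexecuted genericity/transversality argument. The danger you mention is real, not hypothetical: the paper's own representation from Lemma \ref{4.1} satisfies your (i)--(iii), with $\xi_{a,b}$ proportional to $v_{2(a-b)+1}+v_{2(b-a)+2}$ and $\eta$ proportional to $v_1+\cdots+v_{2n}$, and yet there $\langle\xi_{a,b},\xi_{a',b'}\rangle=0$ whenever $a-b\not\equiv a'-b'\pmod n$, so that representation annihilates most reduced words with consecutive letters differing in both indices. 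Hence an explicit construction or a genuine density argument would still be required to close your gap; as written, the proposal settles neither the direction the paper proves nor the direction you attempt.
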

\begin{proof}
Suppose $\J_i$ is compatible with an ordered interval partition $P_i$ for $i=1,2$. Let $P_1=\{W_1,\cdots, W_{r_1}\}$ and $P_2=\{W'_1,\cdots,W'_{r_2}\}$, then $P_1\neq P_2$ implies that there exists a $t$ such that $W_t\neq W'_t$ for some $1\leq t\leq\min\{r_1,r_2\}$. Take the smallest $t$, then $W_s=W'_s$ whenever $s<t$ and $W_t\neq W'_t$. Then, these two intervals begin with the same number but end with different numbers, in other words , we have either $W_t\subsetneqq W'_t$ or $W'_t\subsetneqq W_t$. Without loss of generality, we assume $W_t\subsetneqq W'_t$, then there is a number $q$ s.t $q\in W_t$ but $q+1\not\in W_t$ and $q,q+1\in W_t'$. Now, we have $i_q\neq i_{q+1}$ and $j_{q}=j_{q+1}$,  thus
$$U_{\J_1,\J_2}=\p u_{i_1,j_1}\cdots u_{i_q,j_q}u_{i_{q+1},j_{q+1}}\cdots u_{i_k,j_k}\p=0.$$
\end{proof}

\begin{lemma} Let $(\A,\B,E:\A\rightarrow\B)$ be an operator valued probability space. Let $(x_i)_{i=1,...,n}$ be a sequence of $n$ random variables  which are identically distributed  and boolean independent with respect to $E$. Given two sequences of indices $\J=(i_q)_{q=1,\cdots,k}$, $\J'=(j_q)_{q=1,\cdots,k}\in [n]^k$ and $\J\sim_{P_I([n])}\J'$, then $$E[x_{i_1}b_1x_{i_2}b_2\cdots b_{k-1}x_{i_k})]=E[x_{j_1}b_1x_{j_2}b_2\cdots b_{k-1}x_{j_k}],$$ where $b_1,\cdots,b_{k-1}\in \B\cup\{I_{\A}\}$.
\end{lemma}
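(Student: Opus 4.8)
The plan is to exploit the block structure of the common ordered interval partition to rewrite both words as products of single-variable $\B$-polynomials, and then to peel these apart using boolean independence over $\B$ together with identical distribution. Since $\J\sim_{P_I([k])}\J'$, both sequences are compatible with one ordered interval partition $P=\{W_1,\cdots,W_r\}$. Write $a_s$ (resp.\ $a_s'$) for the common value taken by $(i_q)$ (resp.\ $(j_q)$) on the block $W_s$; compatibility guarantees $a_s\neq a_{s+1}$ and $a_s'\neq a_{s+1}'$ for each $s$, while $a_s=a_{s+2}$ is permitted and harmless.

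First I would regroup the word $x_{i_1}b_1x_{i_2}\cdots b_{k-1}x_{i_k}$ according to the blocks of $P$, attaching each inter-block separator $b_m$ (the one joining the last letter of $W_s$ to the first letter of $W_{s+1}$) to the preceding block. This produces a factorization
$$x_{i_1}b_1x_{i_2}\cdots b_{k-1}x_{i_k}=p_1(x_{a_1})p_2(x_{a_2})\cdots p_r(x_{a_r}),$$
where each $p_s$ lies in $\B\langle X\rangle_0$, since its block contains at least one letter and hence at least one occurrence of $X$. The key observation is that the formal polynomials $p_1,\dots,p_r$ depend only on $P$ and on the fixed coefficients $b_1,\dots,b_{k-1}$, not on the actual index values; hence the identical regrouping of the $\J'$-word reads $p_1(x_{a_1'})\cdots p_r(x_{a_r'})$ with the \emph{same} $p_s$. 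Separators equal to $I_{\A}$ cause no trouble, since the formal unit $1_X\in\B\cup\{\C 1_X\}$ is an admissible coefficient in $\B\langle X\rangle_0$.

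Next I would apply boolean independence over $\B$. Because $a_1\neq a_2\neq\cdots\neq a_r$ and each $p_s\in\B\langle X\rangle_0$, independence gives
$$E[p_1(x_{a_1})\cdots p_r(x_{a_r})]=E[p_1(x_{a_1})]E[p_2(x_{a_2})]\cdots E[p_r(x_{a_r})],$$
and likewise for the $\J'$-word with $a_s'$ in place of $a_s$. Finally, since the $x_i$ are identically distributed with respect to $E$, each single-variable expectation depends only on the polynomial and not on which variable it is evaluated at, so $E[p_s(x_{a_s})]=E[p_s(x_{a_s'})]$ for every $s$. Multiplying these equalities back together yields $E[x_{i_1}b_1\cdots x_{i_k}]=E[x_{j_1}b_1\cdots x_{j_k}]$, as desired.

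The only genuine obstacle is the bookkeeping in the regrouping step: one must verify carefully that the separators can be absorbed so that every block factor lands in $\B\langle X\rangle_0$ and that the resulting polynomials really coincide for $\J$ and $\J'$. Once the factorization is set up correctly, the remaining two steps are immediate applications of the definitions of boolean independence over $\B$ and of identical distribution.
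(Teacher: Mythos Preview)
Your proposal is correct and follows essentially the same route as the paper: both arguments regroup the word according to the common ordered interval partition (the paper appends a dummy $b_k=I_{\A}$ so that every block factor has the uniform shape $\prod x_{i_t}b_t$, which is exactly your device of attaching each inter-block separator to the preceding block), then apply boolean independence over $\B$ to split the expectation block by block, and finally invoke identical distribution on each single-variable factor. Your observation that the resulting polynomials $p_s\in\B\langle X\rangle_0$ depend only on $P$ and the $b_m$'s, not on the index values, is precisely the point that makes the comparison go through, and it is implicit in the paper's chain of equalities.
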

\begin{proof}
Suppose that $\J$ and $\J'$ are compatible with an ordered interval partition $P=\{W_1,\cdots,W_r\}$. Assume that $W_1=\{1,\cdots,k_1\}$, $W_2\{k_1+1,\cdots, k_2\}$,...,$W_r=\{k_{r-1}+1,\cdots,k)\}$, then $i_{k_t}\neq i_{k_t+1}$ and $j_{k_t}\neq j_{k_t+1}$ for $t=1,...,r$. For convenience, we let $k_r=k$, $k_0=0$ and $b_k=I_{\A}$, we have

$$
\begin{array}{rcl}
& &E[x_{i_1}b_1x_{i_2}b_2\cdots b_{k-1}x_{i_k}]\\
&=&E[x_{i_1}b_1x_{i_2}b_2\cdots b_{n-1}x_{i_k}b_k]\\
&=&E[\prod\limits_{s=1}^r(\prod\limits_{t=n_{s-1}+1}^{n_s}x_{i_t}b_t)]\\
&=&\prod\limits_{s=1}^r E[\prod\limits_{t=n_{s-1}+1}^{n_s}x_{i_t}b_t)]\\
&=&\prod\limits_{s=1}^r E[\prod\limits_{t=n_{s-1}+1}^{n_s}x_{j_t}b_t]\\
&=&E[\prod\limits_{s=1}^r\prod\limits_{t=n_{s-1}+1}^{n_s}x_{j_t}b_t]\\
&=&E[x_{j_1}b_1x_{j_2}b_2\cdots b_{k-1}x_{j_k}].\\
\end{array}$$

\end{proof}

We will write $\sim_{P_I}$ short for $\sim_{P_I([k])}$ when there is no confusion.

\begin{theorem}\label{4.5} Let $(\A,\B,E:\A\rightarrow\B)$ be an operator valued probability space, $\A$ be unital and $\{x_i\}_{i=1,...,n}$  be a sequence of $n$ random variables in $\A$ which is identically distributed  and boolean independent with respect to $E$. Let  $\phi$ be a linear functional on $\B$ and $\bar\phi$ is a linear functional on $\A$ where $\bar\phi(\cdot)=\phi(E[\cdot])$.  Then, the joint distribution of the sequence $\{x_i\}_{i=1,...,n}$ with respect to $\bar \phi$ is invariant under the linear coaction of the boolean permutation quantum semigroup $\B_s(n)$.
\end{theorem}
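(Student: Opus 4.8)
The plan is to verify the defining equality
$$\mu_{x_1,\dots,x_n}(p)\,\p=(\mu_{x_1,\dots,x_n}\otimes id_{\B_s(n)})(\L_n p)$$
(with $\mu_{x_1,\dots,x_n}$ taken with respect to $\bar\phi$) on a spanning set of $\C\langle X_1,\dots,X_n\rangle$ and then extend by linearity, since both $p\mapsto(\mu_{x_1,\dots,x_n}\otimes id)\L_n p$ and $p\mapsto\mu_{x_1,\dots,x_n}(p)\p$ are linear. The constant monomial gives $\L_n 1=1\otimes\p$, for which both sides equal $\mu_{x_1,\dots,x_n}(1)\,\p$, so it suffices to treat a nontrivial basis monomial, which I write in the reduced alternating form $p=X_{i_1}^{t_1}\cdots X_{i_k}^{t_k}$ with $i_1\neq i_2\neq\cdots\neq i_k$ and each $t_l\geq1$. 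Applying the preceding lemma on $\L_n$ (with $p_l(X)=X^{t_l}$) expands
$$\L_n p=\sum_{j_1,\dots,j_k=1}^{n}X_{j_1}^{t_1}\cdots X_{j_k}^{t_k}\otimes\p u_{j_1,i_1}\cdots u_{j_k,i_k}\p.$$

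Next I apply $\bar\phi\otimes id$ to this expansion, writing $\J=(i_1,\dots,i_k)$ and $\J'=(j_1,\dots,j_k)$, so that the coefficient of the summand indexed by $\J'$ is $U_{\J',\J}=\p u_{j_1,i_1}\cdots u_{j_k,i_k}\p$. By the vanishing lemma, $U_{\J',\J}=0$ whenever $\J'\not\sim_{P_I}\J$; since the string $\J$ has all consecutive entries distinct, the only ordered interval partition compatible with $\J$ is the partition into singletons, so $\J'\sim_{P_I}\J$ forces $j_1\neq j_2\neq\cdots\neq j_k$ as well. Hence only the summands over alternating strings survive, and
$$(\bar\phi\otimes id)(\L_n p)=\sum_{j_1\neq\cdots\neq j_k}\bar\phi\bigl(x_{j_1}^{t_1}\cdots x_{j_k}^{t_k}\bigr)\,\p u_{j_1,i_1}\cdots u_{j_k,i_k}\p.$$

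The remaining point is that every surviving coefficient is the same scalar. For an alternating $\J'$, boolean independence over $\B$ (applied to the polynomials $X^{t_l}\in\B\langle X\rangle_0$) together with the identical distribution hypothesis gives, via the preceding $E$-lemma,
$$E\bigl[x_{j_1}^{t_1}\cdots x_{j_k}^{t_k}\bigr]=E[x_{j_1}^{t_1}]\cdots E[x_{j_k}^{t_k}]=E[x_{i_1}^{t_1}]\cdots E[x_{i_k}^{t_k}]=E\bigl[x_{i_1}^{t_1}\cdots x_{i_k}^{t_k}\bigr],$$
and applying $\phi$ yields $\bar\phi(x_{j_1}^{t_1}\cdots x_{j_k}^{t_k})=\bar\phi(x_{i_1}^{t_1}\cdots x_{i_k}^{t_k})=\mu_{x_1,\dots,x_n}(p)$. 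Factoring out this common scalar and re-inserting the vanishing terms at no cost, the sum becomes $\mu_{x_1,\dots,x_n}(p)\sum_{j_1,\dots,j_k=1}^{n}\p u_{j_1,i_1}\cdots u_{j_k,i_k}\p$, and the telescoping identity from the beginning of this section (repeatedly using $\sum_{l=1}^n u_{l,i}\p=\p$ from the rightmost factor inward) collapses the remaining sum to $\p$. This yields $(\bar\phi\otimes id)(\L_n p)=\mu_{x_1,\dots,x_n}(p)\,\p$, which is exactly the invariance condition.

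The step I expect to require the most care is the bookkeeping that links the three ingredients: reading off from the vanishing lemma that the admissible $\J'$ are precisely the alternating strings, and then invoking identical distribution and boolean independence in a way valid for every such $\J'$ (including those with an index repeating non-consecutively, such as $\J'=(1,2,1)$, where boolean independence still applies because only consecutive distinctness is needed), so that all admissible coefficients genuinely collapse to the single value $\mu_{x_1,\dots,x_n}(p)$. Once that constancy is established the rest is formal, the telescoping being forced by the defining relation of $\B_s(n)$.
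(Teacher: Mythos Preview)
Your proof is correct and follows essentially the same approach as the paper: restrict the sum via the vanishing lemma to index strings $\J'$ equivalent to $\J$ under $\sim_{P_I}$, use identical distribution plus boolean independence to see that all surviving $E$-values (hence $\bar\phi$-values) coincide, factor out that common scalar, reinstate the zero terms, and collapse the remaining sum to $\p$ by the telescoping identity. The only cosmetic difference is that the paper first proves the identity at the level of $E$ with arbitrary $b_1,\dots,b_{k-1}\in\B\cup\{1_\A\}$ interleaved and then applies $\phi$ with $b_l=1_\A$, whereas you specialize immediately to pure monomials and work directly with $\bar\phi$; the logical skeleton is identical.
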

\begin{proof} Fix $k \in \mathbb{N}$, and  indices $1\leq i_1,\cdots, i_k\leq n$, and $b_1,\cdots,b_{k-1}\in \B\cup \{I_{\A}\}$, where $I_{\A}$ is the unit of $\A$, by the two lemmas above we have

$$
\begin{array}{rcl}
& &\sum\limits_{j_1,j_2,\cdots,j_k=1}^n E[x_{j_1}b_1x_{j_2}b_2\cdots b_{k-1}x_{j_k}]\otimes \p u_{i_1,j_1}\cdots u_{i_k,j_k}{\bf P}\\
&=&\sum\limits_{\substack{j_1,j_2,\cdots,j_k=1\\ (j_s)_{s=1,...,k}\sim_{P_I}(i_t)_{t=1,...,k}}}^n E[x_{j_1}b_1x_{j_2}b_2\cdots b_{k-1}x_{j_k}]\otimes \p u_{i_1,j_1}\cdots u_{i_k,j_k}{\bf P}\\
&=&\sum\limits_{\substack{j_1,j_2,\cdots,j_k=1\\ (j_s)_{s=1,...,k}\sim_{P_I}(i_t)_{t=1,...,k}}}^n E[x_{i_1}b_1x_{i_2}b_2\cdots b_{k-1}x_{i_k}]\otimes \p u_{i_1,j_1}\cdots u_{i_k,j_k}{\bf P}\\
&=&\sum\limits_{j_1,j_2,\cdots,j_n=1}^k E[x_{i_1}b_1x_{i_2}b_2\cdots b_{k-1}x_{i_k}]\otimes \p u_{i_1,j_1}\cdots u_{i_k,j_k}{\bf P}\\
&=&E[x_{i_1}b_1x_{i_2}b_2\cdots b_{k-1}x_{i_k}]\otimes{\bf P}.
\end{array}
$$

Let $b_1,...,b_{k-1}=1_{\A}$ and  let $\phi\otimes id_{\B_s(n)}$ act on the two sides of the above equation then we have 
$$
\begin{array}{rcl}

& &\bar\phi(x_{i_1}x_{i_2}\cdots x_{i_k}){\bf P}\\
&=&\bar\phi(x_{i_1}x_{i_2}\cdots x_{i_k}){\bf P}\\
&=&\sum\limits_{j_1,j_2,\cdots,j_k=1}^n \bar\phi(x_{j_1}x_{j_2}\cdots x_{j_n})\p u_{i_1,j_1}\cdots u_{i_k,j_k}{\bf P},\\
\end{array}
$$
which is our desired conclusion.
\end{proof}

\section{Properties of Tail Algebra for Boolean Independence}
In order to study  boolean exchangeable sequences of random variables, we need to choose a suitable kind of noncommutative probability spaces. It is pointed by Hasebe \cite{Ha} that the $W^*$-probability with faithful normal states does not contain boolean independent random variables with Bernoulli law. Therefore, in our work,I it is necessary to consider $W^*$ probability spaces with more general states rather than faithful states:
\begin{definition} Let  $\A$ be a  von Neumann algebra, a normal state $\phi$ on $\A$ is said to be non-degenerated if  $x=0$ whenever $\phi(axb)=0$ for all $a,b\in \A.$  
\end{definition}

\begin{remark} By proposition 7.1.15 in \cite{KR},  if $\phi$ is a non-degenerated normal state on $\A$ then the GNS representation associated to $\phi$ is faithful. A  faithful normal state on $\A$ is  faithful on all $\A$'s subalgebras but a non-degenerated normal state on $\A$ may not be necessarily  non-degenerated on $\A$'s subalgebras. 
\end{remark}

Let $(\A,\phi)$ be a $W^*$-probability space with a non-degenerated normal state $\phi$. Suppose $\A$ is generated by an infinite sequence of random variables $\{x_i\}_{i\in\mathbb{N}}$, whose joint distribution is invariant under the linear coaction of the quantum semigroups $\B_s(n)$. Let $\A_0$ be the non-unital algebra over $\C$ generated by $\{x_i\}_{i\in\mathbb{N}}$. In this section, we assume that the unit $1_{\A}$ of $\A$ is contained in the weak closure of $\A_0$. We will denote the GNS construction associated to $\phi$ by $(\HH,\xi,\pi)$, then there is a linear map $\hat\cdot: \A_0\rightarrow \HH$ such that $\hat a=\pi(a)\xi$ for all $a\in{\A_0}$.  In the usual sense, the tail algebra $\AT$ of  $\{x_i\}_{i\in\mathbb{N}}$ is defined by:
$$\AT=\bigcap\limits_{n=1}^\infty vN\{x_k|k\geq n\},$$
where $vN\{x_k|k\geq n\}$ is the von Neumann  algebra generated by $\{x_k|k\geq n\}$. We will call $\AT$ unital tail algebra in this paper. In this section, the range algebra we  use is a "non-unital tail algebra" $\T$. The non-unital tail algebra  $\T$ of $\{x_i\}_{i\in\mathbb{N}}$ is given by the follows:
$$\T=\bigcap\limits_{n=1}^\infty W^*\{x_k|k\geq n\},$$
where $W^*\{x_k|k\geq n\}$ is the WOT closure of the non-unital  algebra generated by $\{x_k|k\geq n\}$.  If the unit of $\A$ is contained in $\T$, then $\T$ is also the unital tail-algebra of $\{x_i\}_{i\in\mathbb{N}}$. For convenience, we denote $\A_n$ by the non-unital algebra generated by $\{x_k|k> n\}$. Now, we turn to define our $T$-linear map, the method comes from \cite{KS}. Because we are dealing with von Neumann algebras with non-degenerated normal states which are more general than the faithful states, it is necessary to provide a complete construction here. In \cite{Ko},the normal conditional expectation K\"{o}stler constructed via the shift of the random variables requires the sequence only to be spreadable.  But in our situation, the existence of the normal linear map relies on the invariance under the quantum semigroups $\B_s(n)$'s.\\

\begin{lemma}\label{E}
Let $\A$ be a von Neumann algebra generated by an infinite sequence of selfadjoint random variables $(x_i)_{i\in\mathbb{N}}$, $\phi$ be a non-degenerated normal state on $\A$. If the sequence $(x_i)_{i\in\mathbb{N}}$ is exchangeable in $(\A,\phi)$, then there is a $C^*-$isomorphism  $\alpha: \A_0^{\|\cdot\|}\rightarrow \A_1^{\|\cdot\|}$ such that,
$$\alpha(x_i)=x_{i+1},$$
for all $i\in\mathbb{N}$, where $\A_i^{\|\cdot\|}$ is the $C^*-$algebra generated by $\A_i$.
\end{lemma}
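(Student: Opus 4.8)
The plan is to realize the index shift $x_i\mapsto x_{i+1}$ as an isometric $*$-isomorphism, the only genuine content being that it preserves the $C^*$-norm. Since $\phi$ need not be faithful, one cannot recover $\|a\|$ from the moments $\phi(a^k)$, so norm-invariance of the shift is \emph{not} automatic from exchangeability; this is the main obstacle. I would circumvent it by implementing \emph{finite} permutations (rather than the one-sided shift itself) by \emph{unitaries} on the GNS space. Writing $(\HH,\xi,\pi)$ for the GNS data of $\phi$, let a permutation $\sigma$ of $\{1,\dots,m\}$ (fixing all larger indices) act on the non-unital algebra $\A_0$ by permuting generators, $\sigma\cdot x_i=x_{\sigma(i)}$, and define $V_\sigma\hat a=\widehat{\sigma\cdot a}$ for $a\in\A_0$.

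First I would check $V_\sigma$ is a well-defined isometry: for $a,b\in\A_0$ one has $\langle V_\sigma\hat a,V_\sigma\hat b\rangle=\phi((\sigma\cdot b)^*(\sigma\cdot a))=\phi(\sigma\cdot(b^*a))$, and exchangeability says exactly that permuting the generators leaves every $\phi$-moment unchanged, so this equals $\phi(b^*a)=\langle\hat a,\hat b\rangle$. Because $1_\A$ lies in the WOT-closure of $\A_0$, the algebra $\A_0$ is WOT-dense in $\A$, and hence (using normality of $\phi$ and convexity) $\{\hat a:a\in\A_0\}$ is dense in $\HH$; thus each $V_\sigma$ extends to an isometry of all of $\HH$. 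Since $\sigma\mapsto V_\sigma$ is multiplicative, $V_\sigma V_\tau=V_{\sigma\tau}$, with $V_{\mathrm{id}}=I$, and $\sigma$ has finite order, we get $V_\sigma^{|\sigma|}=I$, so each $V_\sigma$ is a surjective isometry, i.e. a \emph{unitary}. A direct computation on the vectors $\hat a$ then gives the intertwining $V_\sigma\,\pi(a)\,V_\sigma^*=\pi(\sigma\cdot a)$ for all $a\in\A_0$; as $V_\sigma$ is unitary and $\pi$ is faithful (because $\phi$ is non-degenerate), this yields $\|\sigma\cdot a\|=\|a\|$.

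With norm-invariance in hand the rest is routine. Given a polynomial $p$ in $x_1,\dots,x_N$, the cyclic permutation $\sigma=(1\,2\,\cdots\,N{+}1)$ sends $p(x_1,\dots,x_N)$ to $p(x_2,\dots,x_{N+1})$, so $\|p(x_2,\dots,x_{N+1})\|=\|p(x_1,\dots,x_N)\|$; as every element of $\A_0$ involves only finitely many $x_i$, the shift $\alpha_0:\A_0\to\A_1$, $\alpha_0(p(x_1,x_2,\dots))=p(x_2,x_3,\dots)$, is isometric. In particular it is well defined (if $p(x_1,x_2,\dots)=0$ then $\|p(x_2,x_3,\dots)\|=0$) and is a $*$-homomorphism, since the shift of generators respects products and adjoints at the level of polynomials. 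Being isometric, it extends to a $*$-isomorphism $\alpha$ of $\A_0^{\|\cdot\|}$ onto its closed image; since $\alpha_0(\A_0)=\A_1$, that image is the norm closure $\A_1^{\|\cdot\|}$, and $\alpha(x_i)=x_{i+1}$ by construction, giving the desired $C^*$-isomorphism.

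I expect the delicate point to be precisely the passage from moment-invariance to norm-invariance. The naive one-sided shift is only an isometry of $\HH$ and in general fails to be onto (as already happens for boolean-independent sequences), so it does not by itself force $\|p(x_2,\dots)\|=\|p(x_1,\dots)\|$; it is the finite-order trick for genuine permutations that supplies the reverse inequality. One must also take care to justify that $1_\A\in\overline{\A_0}^{\,\mathrm{WOT}}$ upgrades to density of $\{\hat a\}$ in $\HH$, as this is exactly what makes the $V_\sigma$ unitaries on all of $\HH$ rather than on a proper subspace.
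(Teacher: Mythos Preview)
Your argument is correct and is a genuinely different (and cleaner) route than the paper's. The paper implements the one-sided shift directly as an isometry $U:\HH\to\HH_1$ onto the closed span of $\{\hat a:a\in\A_1\}$, and then has to work to show that the induced GNS representation $\pi'$ of $\A_1^{\|\cdot\|}$ on $\HH_1$ is \emph{faithful}: it first argues that $\xi\in\HH_1$ (using the standing hypothesis $1_\A\in\overline{\A_0}^{\mathrm{WOT}}$ together with exchangeability), and then, for $x\in\A_1$ and $y\in\A_0$, uses an ad hoc permutation of indices to rewrite $\|\pi(x)\hat y\|$ as $\|\pi'(x)\hat{y'}\|$ with $y'\in\A_1$, concluding $\|\pi(x)\|\le\|\pi'(x)\|$. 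Only then does conjugation by $U$ produce a norm-preserving $*$-homomorphism. You bypass this by observing that finite permutations act by \emph{unitaries} $V_\sigma$ on $\HH$ (the finite-order relation $V_\sigma^{|\sigma|}=I$ forcing surjectivity), so $\|\sigma\cdot a\|=\|\pi(\sigma\cdot a)\|=\|V_\sigma\pi(a)V_\sigma^*\|=\|a\|$ is immediate from faithfulness of $\pi$; then a single cyclic permutation $(1\,2\,\cdots\,N{+}1)$ realizes the shift on any polynomial in $x_1,\dots,x_N$.

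What each approach buys: the paper's argument simultaneously establishes that the restricted GNS representation of $\A_1^{\|\cdot\|}$ is faithful, a fact of some independent interest, and its isometry $U$ is conceptually close to the shift used in the subsequent construction of the conditional expectation. Your approach is more elementary and more transparent about \emph{why} non-degeneracy plus exchangeability forces norm-invariance of the shift; it also makes explicit the point you flag at the end, namely that the one-sided shift isometry alone cannot give the reverse norm inequality and that finite permutations are what supply it. Both proofs use the standing assumption $1_\A\in\overline{\A_0}^{\mathrm{WOT}}$ in the same place: to get density of $\{\hat a:a\in\A_0\}$ in $\HH$ (via Kaplansky), so that the isometries are defined on all of $\HH$.
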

\begin{proof}
Let $(\HH,\xi,\pi)$ be the GNS construction associated to $\phi$, it follows that $\{\hat a|a\in \A_0\}$ is dense in $\HH$. For each $n\in\mathbb{N}$, denote by $A_{[n]}$ the non-unital algebra generated by $\{x_i|i\leq n\}$. Then $\bigcup\limits_{n=1}^{\infty}\{\pi(a)\xi|a\in A_{[n]}\}$ is dense in $\HH$. 
Given $y\in\bigcup\limits_{n=1}^{\infty} A_{[n]}$, there exists $N\in\mathbb{N}$ such that $y\in A_{[N]}$. We can assume $y=p(x_1,...,x_N)$ for some $p\in\C\langle X_1,...,X_N\rangle_0$, then we have 
$$
\begin{array}{rcl}
&&\|\pi(p(x_1,...,x_N))\xi\|^2=\phi (\pi(p(x_1,...,x_N)^*(p(x_1,...,x_N)))\\
&=&\phi (p(x_2,...,x_{N+1})^*(p(x_2,...,x_{N+1}))\\
&=&\|\pi(p(x_2,...,x_{N+1}))\xi\|^2
\end{array}
$$

We can define an isometry $U$ from $\HH$ to its subspace $\HH_1$ which is generated by $\{\hat{a}|a\in A_{1}\}$ by the following formula:
$$U\pi(x_{i_1}\cdots x_{i_k})\xi=\pi(x_{i_1+1}\cdots x_{i_k+1})\xi,$$
for all $i_1,...,i_k\in\mathbb{N}$.\\
Since  $\phi$ gives a faithful representation to $\A$, it gives a faithful representation to $\A_0^{\|\cdot\|}$. For all $y\in\A_1$, according to the faithfulness, we have 
$$\|y\|^2=\sup\{\frac{\langle y^*y\hat a,\hat a \rangle}{\langle \hat a,\hat a \rangle}|a\in\A_0, \hat a\neq 0\}=\sup\{\frac{\phi(a^*y^*ya)}{\phi(a^*a)}|a\in\A_0, \phi(a^*a)\neq 0\}.$$

Denote by $(\HH',\xi',\pi')$ the GNS representation of $\A_1$ associated to $\phi$. Indeed, $\HH'$ can treated as $\HH_1$. Because the  identity of $\A$ is contained in the weak$^*$-closure  of the non unital algebra generated by $(x_i)_{i\in\mathbb{N}}$, by the Kaplansky density theorem, there exists a bounded sequence $\{y_i|\|y_i\|\leq 1\}\in \bigcup\limits_{n=1}^{\infty}A_{[n]}$ such that $y_i$ converges to $1_{\A}$ in WOT. Therefore, $\pi(y_i)\xi$ converges to $\xi$ in norm. Again, by the exchangeability of $(x_i)_{i\in\mathbb{N}}$ and $U\pi(y_i)\xi\in\{\hat b|b\in\A_1\}$ for all $i$,  we have 
$$\|U\pi(y_i)\xi\|=\|\pi(y_i)\xi\|\leq 1$$
and $$\langle U\pi(y_i)\xi,\xi\rangle=\langle\pi(y_i)\xi,\xi\rangle\rightarrow 1. $$
Therefore, $U\pi(y_i)\xi$ converges to $\xi$ in norm, namely, $\xi\in\HH_1.$

Let $x\in\A_1$, then $x=p(x_2,...x_{N+1})$ for some $N$ and $p\in\C\langle X_1,...,X_N\rangle_0$. For every $y\in\A_0$ there exists an $M$, such that $y=p'(x_1,...,x_M)$ for some $p'\in\C\langle X_1,...,X_M\rangle_0$. By the exchangeability, we send $x_1$ to $x_{N+M}$. Then 
$$
\begin{array}{rcl}
\|\pi(x)\hat{y}\|_{\HH}^2&=&\phi(p'(x_1,...,x_M)^*p(x_2,...x_{N+1})^*p(x_2,...x_{N+1})p'(x_1,...,x_M))\\
&=&\phi(p'(x_{M+N},...,x_M)^*p(x_2,...x_{N+1})^*p(x_2,...x_{N+1})p'(x_{N+M},x_2,x_3...,x_M))\\
&=&\|\pi'(x)\widehat{p'(x_{M+N},x_2...,x_M))}\|_{\HH'}^2
\end{array}
$$
and $$\|\widehat{p'(x_1,...,x_M))}\|_{\HH}=\|\widehat{p'(x_{M+N},x_2...,x_M))}\|_{\HH'}.$$
Therefore, we get 
$$\{\frac{\|\pi(x)\hat{a}\|_{\HH}}{\|\hat{a}\|_{\HH}}|a\in\A_0, \hat a\neq 0\}\subseteq\{\frac{\|\pi'(x)\hat{a}\|_{\HH'}}{\|\hat{a}\|_{\HH'}}|a\in\A_1, \hat a\neq 0\},$$ which implies
$$\|x\|=\|\pi(x)\|=\sup\{\frac{\|\pi{x}\hat{a}\|_{\HH}}{\|\hat{a}\|_{\HH}}|a\in\A_0, \hat a\neq 0\}\leq\sup\{\frac{\|\pi'(x)\hat{a}\|_{\HH'}}{\|\hat{a}\|_{\HH'}}|a\in\A_1, \hat a\neq 0\}=\|\pi'(x)\|.$$ 
It follows that $\|x\|=\|\pi'(x)\|$ for all $x\in\A_1$. By taking  the norm limit, we have $\|x\|=\|\pi'(x)\|$ for all $x\in \A_1^{\|\cdot\|}$, so the GNS representation of $\A_1^{\|\cdot\|}$ associated to $\phi$ is faithful. \\
Now, we turn to define our $C^*$-isomorphism $\alpha$:\\
Since $U$ is an isometric isomorphism from $\HH$ to $\HH'$, we define a homomorphism $\alpha': \pi(\A_0)\rightarrow B(\HH')$ by the following formula
$$\alpha'(y)=UyU^*,$$
for $y\in \pi(\A_0)$. Let $y\in\pi(\A_{[n]})$, then $y=\pi(p(x_1,...,x_n))$ for some $p\in\C\langle X_1,...,X_n\rangle_0$. For all $v\in\bigcup\limits_{n=2}^{\infty}\{\pi(a)\xi|a\in A_{[n]}\subset\HH'$, there exists $N\in \mathbb{N}$ and $p_1\in\C\langle X_1,...X_N\rangle_0$  such that $v=\pi(p_1(x_2,...,x_{N+1}))\xi$. We have
$$
\begin{array}{rcl}
\alpha'(y)v&=&U\pi(p(x_1,...,x_n)U^*\pi(p_1(x_2,...,x_{N+1}))\xi\\
&=&U\pi(p(x_1,...,x_n)\pi(p_1(x_1,...,x_{N}))\xi\\
&=&U\pi(p(x_1,...,x_n)p_1(x_1,...,x_{N}))\xi\\
&=&\pi(p(x_2,...,x_{n+1})p_1(x_1,...,x_{N+1}))\xi
\end{array}.
$$
Since $\bigcup\limits_{n=2}^{\infty}\{\pi(a)\xi|a\in A_{[n]}$ is dense in $\HH_1$, we get $\alpha'(\pi(p(x_1,...,x_n))=\pi(p(x_2,...,x_{n+1}))$.  Because $(\HH,\xi,\pi)$ and $(\HH',\xi',\pi')$ are faithful GNS representations for $\A_0$ and $\A_1$ respectively, there is a well defined norm preserving homomorphism $\alpha:\A_0\rightarrow \A_1$, such that $\alpha(x_i)=x_{i+1}$ for all $i\in\mathbb{N}$. Therefore, $\alpha$ extends to a $C^*$-isomorphism from $\A_0^{\|\cdot\|}$ to $\A_1^{\|\cdot\|}$.
\end{proof}

Since $W^*\{x_k|k\geq n\}$'s are WOT closed, their intersection is a WOT closed subset of $\A$. Following the proof of proposition 4.2 in \cite{KS}, we have

\begin{lemma}
For each $a\in\A_0$, $\{\alpha^n(a)\}_{n\in\mathbb{N}}$ is a bounded WOT convergent sequence. Therefore, there exists a well defined $\phi$-preserving linear map $E:\A_0\rightarrow \T$ by the following formula:
$$E[a]=w^*-\lim\limits_{n\infty}\alpha^n(a)$$
for $a\in\A_0$
\end{lemma}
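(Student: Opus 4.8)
The plan is to establish the three properties in turn---uniform boundedness of $\{\alpha^n(a)\}$, its WOT convergence, and the membership of the limit in $\T$ together with the $\phi$-preserving and linearity properties of $E$---reducing everything to the ordinary exchangeability of $(x_i)_{i\in\mathbb{N}}$, which is available here since the standing invariance under the $\B_s(n)$'s implies invariance under $A_s(n)$ and hence classical exchangeability (and is in any case the hypothesis of Lemma \ref{E}). Boundedness is immediate: by Lemma \ref{E} the shift $\alpha$ is a $C^*$-isomorphism, so $\|\alpha^n(a)\|=\|a\|$ for every $n$, and the sequence is bounded by $\|a\|$. Because the sequence is uniformly bounded, to prove WOT convergence it suffices to prove that the matrix coefficients $\langle\pi(\alpha^n(a))\eta,\zeta\rangle$ converge for $\eta,\zeta$ ranging over the dense set $\{\pi(b)\xi\mid b\in\bigcup_{n}A_{[n]}\}$ of $\HH$; for such vectors $\eta=\pi(b)\xi$ and $\zeta=\pi(c)\xi$ one has $\langle\pi(\alpha^n(a))\pi(b)\xi,\pi(c)\xi\rangle=\phi(c^*\alpha^n(a)b)$.

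First I would prove that these scalars stabilize. By linearity it is enough to treat a single monomial $a=x_{i_1}\cdots x_{i_k}$, whose indices are bounded by some $m$, and polynomials $b,c$ whose indices are bounded by some $M$. For $n\ge M$ the indices occurring in $\alpha^n(a)=x_{i_1+n}\cdots x_{i_k+n}$ all exceed $M$, hence are disjoint from those of $b$ and $c$. The key step is to exhibit a finite permutation $\sigma$ that fixes $\{1,\dots,M\}$ and sends each index $i_j+n$ of $\alpha^n(a)$ to $i_j+n+1$: concretely one takes the cyclic shift $t\mapsto t+1$ on the interval $\{n+1,\dots,n+m+1\}$ (with $n+m+1\mapsto n+1$) and the identity elsewhere. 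Applying $\sigma$ to the word $c^*\alpha^n(a)b$ leaves $b$ and $c$ untouched and turns $\alpha^n(a)$ into $\alpha^{n+1}(a)$, so exchangeability gives $\phi(c^*\alpha^n(a)b)=\phi(c^*\alpha^{n+1}(a)b)$ for all $n\ge M$. Hence the scalar sequence is eventually constant, in particular convergent; note that no Cesàro averaging is needed.

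Combining the two previous paragraphs, the matrix coefficients of $\pi(\alpha^n(a))$ converge on a dense set of vectors, and uniform boundedness upgrades this (by a standard $\varepsilon/3$ estimate) to convergence of $\langle\pi(\alpha^n(a))\eta,\zeta\rangle$ for all $\eta,\zeta\in\HH$; the limit is a bounded sesquilinear form of norm at most $\|a\|$, hence defines $T\in B(\HH)$ with $\pi(\alpha^n(a))\to T$ in WOT. Since $\pi$ is faithful (the state $\phi$ being non-degenerated) and $\pi(\A)$ is WOT-closed, $T=\pi(E[a])$ for a unique $E[a]\in\A$. To locate the limit in $\T$, fix $m$; for every $n\ge m$ one has $\alpha^n(a)\in\A_n\subset W^*\{x_k\mid k\ge m\}$, and the latter set is WOT-closed, so $E[a]\in W^*\{x_k\mid k\ge m\}$; intersecting over $m$ gives $E[a]\in\T$. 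Finally, $E$ is $\phi$-preserving because $\phi(\alpha^n(a))=\phi(a)$ by exchangeability and $\phi$ is normal, whence $\phi(E[a])=\lim_n\phi(\alpha^n(a))=\phi(a)$; and $E$ is linear since each $\alpha^n$ and the WOT limit are linear. The main obstacle I anticipate is not the combinatorial stabilization itself but the careful passage from convergence on a dense set of vectors to genuine WOT convergence of the operators, together with the verification that the resulting limit lands in the \emph{non-unital} tail algebra $\T$ rather than merely in $\A$.
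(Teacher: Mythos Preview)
Your proposal is correct and follows essentially the same route as the paper: show $\|\alpha^n(a)\|=\|a\|$ from Lemma~\ref{E}, then use exchangeability to see that $\phi(c^*\alpha^n(a)b)$ is eventually constant once the shifted indices of $a$ are disjoint from those of $b,c$, and pass from this dense-set convergence plus uniform boundedness to WOT convergence with limit in $\T$. Your write-up is in fact more explicit than the paper's (you exhibit the cyclic permutation, spell out the $\varepsilon/3$ step, and verify $E[a]\in\T$ and $\phi$-preservation separately), but the underlying argument is identical.
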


\begin{proof}
By lemma \ref{E}, there is a norm preserving endomorphism $\alpha$ of $\A_0$ such that 
   $$\phi\circ\alpha=\phi\,\,\,\,\,\text{and}\,\,\,\alpha(x_i)=x_{i+1}.$$
For $I\subset\mathbb {N}$, denote by $\A_I$  the non-unital algebra generated by $ \{x_i|i\in I\}$. Suppose $a,b,c\in \bigcup\limits_{|I|<\infty}A_I$,  we can assume $a\in \A_I$,$b\in\A_J$ and $c\in\A_K$ for some finite sets $I,J,K\subset\mathbb{N}$. Because $I,J,K$ are finite, there exists  an $N$ such that $(I\cup K)\cap(J+n)=\emptyset,$ for all $n>N$. We infer from the exchangeability that
  $\phi(a\alpha^n(b)c)=\phi(a\alpha^{n+1}(b)c)$ for all $n>N$. This establishes the limit $$\lim\limits_{n\rightarrow \infty} \phi(a\alpha^n(b)c)$$ 
on the weak$^*$-dense algebra $\bigcup\limits_{|I|<\infty}A_I$. We conclude from this and $\{\alpha^n(b)\}_{n\in\mathbb{N}}$ is bounded  that the pointwise limit of the sequence $\alpha$ defines a linear map $E:\A_0\rightarrow \A$ such that $E(\A_0)\subset \T.$\\
\end{proof}

To extend $E$ to the $W^*-$algebra $\A$, we need to make use of the  boolean invariance conditions. 

\begin{lemma}\label{6.3} Let $(\A,\phi)$ be a noncommutative probability space, $\{x_i\}_{i\in\mathbb{N}}\subset \A$ be an infinite sequence of random variables whose joint distribution is invariant under the linear coactions of the quantum semigroups $\B_s(k)$'s, then 
$$\phi(x_{i_1}^{k_1}x_{i_2}^{k_2}\cdots x_{i_n}^{k_n})=\phi(x_{1}^{k_1}x_{2}^{k_2}\cdots x_{n}^{k_n}),$$
whenever $i_1\neq i_2\neq\cdots \neq i_n,$ and $k_1,...,k_n\in\mathbb{N}$
\end{lemma}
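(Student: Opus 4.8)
The plan is to feed the single monomial $p=X_{i_1}^{k_1}\cdots X_{i_n}^{k_n}$ into the invariance condition and read off one algebraic identity in $\B_s(N)$, with $N$ chosen larger than $n$ and than all the $i_t$. Writing $\J=(j_1,\dots,j_n)$, $\mathcal I=(i_1,\dots,i_n)$ and $U_{\J,\mathcal I}=\p u_{j_1,i_1}\cdots u_{j_n,i_n}\p$, the formula for $\L_n$ on products of powers (valid because $\mathcal I$ is consecutively distinct) together with the invariance condition gives
$$\phi(x_{i_1}^{k_1}\cdots x_{i_n}^{k_n})\,\p=\sum_{j_1,\dots,j_n=1}^{N}\phi(x_{j_1}^{k_1}\cdots x_{j_n}^{k_n})\,U_{\J,\mathcal I}.$$
Since $\mathcal I$ is consecutively distinct, the relations $u_{j,k}u_{j,l}=0$ ($k\neq l$) make $U_{\J,\mathcal I}=0$ unless $\J$ is consecutively distinct as well, so only such $\J$ survive. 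Summing the last index via $\sum_k u_{k,i_n}\p=\p$ lowers the word length by one, and iterating down to the empty word yields $\sum_{\J}U_{\J,\mathcal I}=\p$.

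First I would subtract $\phi(x_{i_1}^{k_1}\cdots x_{i_n}^{k_n})$ times this last identity from the displayed equation, obtaining
$$\sum_{\J\,\mathrm{c.d.}}\bigl(\phi(x_{j_1}^{k_1}\cdots x_{j_n}^{k_n})-\phi(x_{i_1}^{k_1}\cdots x_{i_n}^{k_n})\bigr)\,U_{\J,\mathcal I}=0,$$
the sum being over consecutively distinct $\J\in\{1,\dots,N\}^n$. If the elements $\{U_{\J,\mathcal I}:\J\ \text{consec.\ distinct}\}$ are linearly independent in $\B_s(N)$, this forces $\phi(x_{j_1}^{k_1}\cdots x_{j_n}^{k_n})=\phi(x_{i_1}^{k_1}\cdots x_{i_n}^{k_n})$ for \emph{every} consecutively distinct $\J$; specializing to $\J=(1,2,\dots,n)$, which is consecutively distinct and lies in the index range, gives exactly the assertion. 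I note that this route does not use exchangeability at all: it is the rigidity of the coefficient system, not any symmetry of $\phi$, that collapses all the distinct-consecutive index patterns simultaneously.

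Thus the whole statement reduces to one structural fact about the corner algebra $\B_s(N)=\p B_s(N)\p$, namely linear independence of the reduced words $U_{\J,\mathcal I}$ for fixed consecutively distinct $\mathcal I$ and varying consecutively distinct $\J$. This is the hard part. The representations of Lemma \ref{4.1} are not enough: there $u_{i,j}$ depends only on $i-j$, so $U_{\J,\mathcal I}$ is nonzero only when $\J$ is a constant shift of $\mathcal I$, and such representations can never separate two consecutively distinct sequences of different repetition pattern (e.g.\ $(1,2,1)$ from $(1,2,3)$). So I would instead construct genuinely new finite dimensional representations: take $u_{i,j}=P_{f_{i,j}}$ of rank one with the array $(f_{i,j})$ arranged so that every row and every column is orthonormal and a prescribed unit vector $\zeta$ lies in each column span, and put $\pi(\p)=P_\zeta$; the defining relation $\sum_i u_{i,j}\p=\p$ then holds automatically. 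Choosing the $f_{i,j}$ so that a designated ``middle'' entry itself realizes $\zeta$ while overlapping the letters occurring in a target word lets one arrange $\langle\pi(U_{\J_0,\mathcal I})\zeta,\zeta\rangle\neq0$ for a prescribed $\J_0$ — already a small model on $\C^4$ yields $\p u_{1,1}u_{2,2}u_{1,3}\p\neq0$, showing that cross-pattern words do not vanish.

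The remaining, and main, obstacle is to assemble enough such representations (or a single faithful ``Fock type'' representation of $B_s(N)$) to certify that no nontrivial linear combination of the $U_{\J,\mathcal I}$ vanishes. I expect this to follow from a normal form / length filtration argument: the idempotent and orthogonality relations only annihilate words having a repeated consecutive first index, while the relations $\sum_k u_{k,i}\p=\p$ are length lowering, so they express shorter words in terms of length-$n$ partial sums rather than producing any relation internal to the length-$n$, fixed-$\mathcal I$ family (in particular $\sum_{\J}U_{\J,\mathcal I}=\p$ merely exhibits $\p$ in terms of the $U_{\J,\mathcal I}$ and does not interfere with the subtraction above). Turning this heuristic into a genuine separation of the $U_{\J,\mathcal I}$ by representations is where the real work of the proof lies.
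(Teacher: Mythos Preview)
Your reduction to the identity
\[
\sum_{\J\ \mathrm{c.d.}}\bigl(\phi(x_{j_1}^{k_1}\cdots x_{j_n}^{k_n})-\phi(x_{i_1}^{k_1}\cdots x_{i_n}^{k_n})\bigr)\,U_{\J,\mathcal I}=0
\]
is correct, but the proof then rests entirely on the linear independence of $\{U_{\J,\mathcal I}:\J\text{ consec.\ distinct}\}$ in $\B_s(N)$, and this is never established. You yourself flag it as ``where the real work of the proof lies,'' and the sketch you give --- building rank-one $u_{i,j}=P_{f_{i,j}}$ with a common column vector $\zeta$, plus a vague normal-form/length-filtration heuristic --- is not a proof. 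Producing one representation with $\pi(U_{\J_0,\mathcal I})\neq 0$ is far from separating the whole family linearly, and the relations $\sum_k u_{k,i}\p=\p$ do create length-mixing that your filtration remark does not control. As written, the argument is a reformulation of the problem into a harder structural question about a universal $C^*$-algebra, not a solution.

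The paper avoids this entirely. It first observes that invariance under $\B_s(k)$ implies invariance under $A_s(k)$, hence classical exchangeability; this immediately handles the case where $i_1,\dots,i_n$ are pairwise distinct. For the general consecutively-distinct case, instead of proving any linear independence, the paper builds for each $M$ a concrete representation $\pi_M$ of $\B_s(M+N)$ that fixes the first $N$ coordinates and uses the $\C^{2M}$ model of Lemma~\ref{4.1} on the rest, where $PP_{i,j}P=\frac{1}{M}P$. Applying $\pi_M$ to the invariance identity turns it into a scalar equation; the terms where the repeated indices are spread apart contribute $\frac{M(M-1)\cdots(M-m+1)}{M^m}$ times a single (exchangeability-normalized) moment, while the remaining terms are $O(1-\frac{M(M-1)\cdots}{M^m})$ and vanish as $M\to\infty$. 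One limit, not a linear-independence theorem, does the job. Your remark that ``this route does not use exchangeability at all'' is precisely the point of divergence: exchangeability is free here and is the tool that makes the paper's argument short.
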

\begin{proof}
If $i_l\neq i_m$ for all $l\neq m$, then the statement holds by the exchangeability of the sequence. Suppose the number $i_l$ appears $m$ times in the sequence, which are $\{i_{l                                                                                                                                          _j}\}_j=1,...,m$ such that $i_{l_j}=i_{l}$ and $l_1< l_2<\cdots<l_m$. Since the sequence is finite, with out losing generality, we can assume that $i_1,...,i_n\leq N+1$ and $i_{l_j}=N+1$ for some $N$  by the exchangeability.\\

For each $M\in\mathbb{N}$, by lemma 4.2, we have the following representation $\pi_M$ of the quantum semigroup $\B_s(M+N)$: 
$$\pi_M(u_{i,j})=\left\{
\begin{array}{lr}
P_{i-N,j-N},\,\,\,\,\,\,\,&\text{if}\,\,\,\, \min\{i,j\}>N \\
\delta_{i,j}P,\,\,\,\,\,\,\,&\text{if}\,\,\,\,\min\{i,j\}\leq N \\
\end{array}
\right.,
$$
and $\pi(\p)=P$, where $p_{i,j}$ and $p$ are projections in $B(\C^{2M})$given by lemma 4.2. Then we have 
$$PP_{i,j}P=\frac{1}{M}P,$$
for $1\leq i,j\leq N$.\\
According to the boolean invariance condition, we have:

$$
\begin{array}{rcl}
& &\phi(x_{i_1}^{k_1}x_{i_2}^{k_2}\cdots x_{i_n}^{k_n})P\\

&=&\sum\limits_{j_1,j_2,...j_n=1}^{M+N} \phi(x_{i_1}^{k_1}x_{i_2}^{k_2}\cdots x_{i_n}^{k_n}) Pu_{j_1,i_1}\cdots u_{j_n,i_n}P \\

&=&\sum\limits_{j_{l_1},j_{l_2},...j_{l_m}=1}^{N} \phi(x_{i_1}^{k_1}\cdots x_{j_{l_1}}^{k_{l_1}}\cdots x_{j_{l_2}}^{k_{l_2}}\cdots x_{i_n}^{k_n}) PP_{j_{l_1},i_{l_1}}PP_{j_{l_2},i_{l_2}}P\cdots u_{j_{l_m},i_{l_m}}P \\

&=&\frac{1}{M^m}\sum\limits_{j_{l_1},j_{l_2},...j_{l_m}=1}^{N} \phi(x_{i_1}^{k_1}\cdots x_{j_{l_1}}^{k_{l_1}}\cdots x_{j_{l_2}}^{k_{l_2}}\cdots x_{i_n}^{k_n})P \\

&=&\frac{1}{M^m}[\sum\limits_{j_{l_s}\neq j_{l_t}\, \text{if}\, s\neq t}^{N} \phi(x_{i_1}^{k_1}\cdots x_{j_{l_1}}^{k_{l_1}}\cdots x_{j_{l_2}}^{k_{l_2}}\cdots x_{i_n}^{k_n})P + \sum\limits_{j_{l_s}=j_{l_t}\, \text{for some}\, s\neq t}^{N} \phi(x_{i_1}^{k_1}\cdots x_{j_{l_1}}^{k_{l_1}}\cdots x_{j_{l_2}}^{k_{l_2}}\cdots x_{i_n}^{k_n})P].\\
\end{array}
$$
In the first part of the sum, by the exchangeability, it follows that

$$\phi(x_{i_1}^{k_1}\cdots x_{j_{l_1}}^{k_{l_1}}\cdots x_{j_{l_2}}^{k_{l_2}}\cdots x_{i_n}^{k_n})=\phi(x_{i_1}^{k_1}\cdots x_{N+1}^{k_{l_1}}\cdots x_{N+2}^{k_{l_2}}\cdots x_{i_n}^{k_n}),$$
where we sent $j_{l_s}$ to $N+s$. Then, we have 
$$
\frac{1}{M^m}\sum\limits_{j_{l_s}\neq j_{l_t}\, \text{if}\, s\neq t}^{N} \phi(x_{i_1}^{k_1}\cdots x_{j_{l_1}}^{k_{l_1}}\cdots x_{j_{l_2}}^{k_{l_2}}\cdots x_{i_n}^{k_n})P=
\frac{\prod\limits_{s=0}^{m-1}(M-s)}{M^m}\phi(x_{i_1}^{k_1}\cdots x_{N+1}^{k_{l_1}}\cdots x_{N+2}^{k_{l_2}}\cdots x_{i_n}^{k_n})P,
$$
which converges to $\phi(x_{i_1}^{k_1}\cdots x_{N+1}^{k_{l_1}}\cdots x_{N+2}^{k_{l_2}}\cdots x_{i_n}^{k_n})P$ as $M$ goes to $\infty$.\\
To the second part of the sum, we have 
$$\phi(x_{i_1}^{k_1}\cdots x_{j_{l_1}}^{k_{l_1}}\cdots x_{j_{l_2}}^{k_{l_2}}\cdots x_{i_n}^{k_n})\leq\|x_{i_1}^{k_1}\cdots x_{j_{l_1}}^{k_{l_1}}\cdots x_{j_{l_2}}^{k_{l_2}}\cdots x_{i_n}^{k_n}\|\leq\|x_1^{k_1+\cdots+k_n}\|,$$
which is bounded, therefore,
$$
|\frac{1}{M^m}\sum\limits_{j_{l_s}=j_{l_t}\, \text{for some}\, s\neq t}^{N} \phi(x_{i_1}^{k_1}\cdots x_{j_{l_1}}^{k_{l_1}}\cdots x_{j_{l_2}}^{k_{l_2}}\cdots x_{i_n}^{k_n})|\leq (1-\frac{\prod\limits_{s=0}^{m-1}(M-s)}{M^m})\|x_1^{k_1+\cdots+k_n}\|
$$
goes to $0$ as $M$ goes to $\infty$.
By now, we have showed that if there are indices $i_s=i_t$ for $s\neq t$ in the the sequence, we can, with out changing the value of the mixed moments, change them to two different large numbers $j_s,j_t$ such that $j_s$, $j_t$ differ the other indices. After a finite steps, we will have
$$\phi(x_{i_1}^{k_1}x_{i_2}^{k_2}\cdots x_{i_n}^{k_n})=\phi(x_{j_1}^{k_1}x_{j_2}^{k_2}\cdots x_{j_n}^{k_n}),$$
such that all the $j_l$' are not equal to any of the other indices. By the exchangeability, the proof is complete. 
\end{proof}
\begin{corollary}\label{c63}
Let $\{x_i\}_{i\in\mathbb{N}}\subset (\A,\phi)$ be an infinite sequence of random variables whose joint distribution is invariant under the linear coactions of the quantum semigroups $\B_s(k)$'s, then 
$$\phi(x_{i_1}^{k_1}x_{i_2}^{k_2}\cdots x_{i_n}^{k_n})= \phi(x_{j_1}^{k_1}x_{j_2}^{k_2}\cdots x_{j_n}^{k_n}),$$
whenever $i_1\neq i_2\neq\cdots \neq i_n,$, $j_1\neq j_2\neq\cdots \neq j_n,$ $k_1,...,k_n,j_1,...j_n\in\mathbb{N}$. Moreover, we have 
$$\phi(ax_{i_1}^{k_1}x_{i_2}^{k_2}\cdots x_{i_n}^{k_n}b)= \phi(ax_{j_1}^{k_1}x_{j_2}^{k_2}\cdots x_{j_n}^{k_n}b),$$ 
whenever
$i_1\neq i_2\neq\cdots \neq i_n$, $j_1\neq j_2\neq\cdots \neq j_n$, $k_1,...,k_n$, $j_1,...j_n>M$ and $a,b\in \A_{[M]}$ for some $M\in\mathbb{N}$.
\end{corollary}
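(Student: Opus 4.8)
The plan is to derive both identities directly from Lemma~\ref{6.3}, which records that any mixed moment $\phi(x_{i_1}^{k_1}\cdots x_{i_n}^{k_n})$ with $i_1\neq\cdots\neq i_n$ depends only on the exponent sequence $(k_1,\dots,k_n)$, namely it equals the canonical moment $\phi(x_1^{k_1}\cdots x_n^{k_n})$. The first displayed identity is then immediate: both $(i_1,\dots,i_n)$ and $(j_1,\dots,j_n)$ satisfy the distinct-consecutive hypothesis, so Lemma~\ref{6.3} rewrites each side as $\phi(x_1^{k_1}\cdots x_n^{k_n})$, and the two sides coincide.

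For the second identity I would first reduce to the case where $a$ and $b$ are monomials, using that both sides are linear in $a$ and in $b$ separately. Writing such a monomial in normal form, by collecting adjacent equal generators, I may assume $a=x_{s_1}^{m_1}\cdots x_{s_p}^{m_p}$ with $s_1\neq\cdots\neq s_p$ and $b=x_{r_1}^{m_1'}\cdots x_{r_q}^{m_q'}$ with $r_1\neq\cdots\neq r_q$, all frame indices $s_\bullet,r_\bullet$ being $\leq M$. The key observation is that the full word $a\,x_{i_1}^{k_1}\cdots x_{i_n}^{k_n}\,b$ is itself a product of powers of generators whose index sequence $(s_1,\dots,s_p,i_1,\dots,i_n,r_1,\dots,r_q)$ is distinct-consecutive: within the middle block this is the hypothesis $i_1\neq\cdots\neq i_n$, while at the two junctions one has $s_p\leq M<i_1$ and $i_n>M\geq r_1$, so no two adjacent indices collide.

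Therefore Lemma~\ref{6.3}, applied with $n$ replaced by $p+n+q$, yields
$$\phi(a\,x_{i_1}^{k_1}\cdots x_{i_n}^{k_n}\,b)=\phi\big(x_1^{m_1}\cdots x_p^{m_p}\,x_{p+1}^{k_1}\cdots x_{p+n}^{k_n}\,x_{p+n+1}^{m_1'}\cdots x_{p+n+q}^{m_q'}\big).$$
Running the same argument on $a\,x_{j_1}^{k_1}\cdots x_{j_n}^{k_n}\,b$ — where the hypotheses $j_1\neq\cdots\neq j_n$ and $j_\bullet>M$ guarantee the identical distinct-consecutive property — produces exactly the same right-hand side, since the normal forms of $a,b$ and the exponents $k_1,\dots,k_n$ are unchanged. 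Comparing the two gives the asserted equality, and undoing the reduction to monomials by linearity finishes the proof.

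I expect the only delicate point to be the normal-form bookkeeping: one must check that passing $a$ and $b$ to normal form does not merge letters across the junctions with the middle block, which is precisely where the strict inequalities $i_t,j_t>M\geq s_\bullet,r_\bullet$ are used, and that the two canonical words obtained from the $i$- and $j$-sequences are \emph{literally} the same monomial. Once this is in place, no analytic input beyond Lemma~\ref{6.3} is needed; in particular the averaging-and-limit argument of that lemma does not have to be repeated for the framed moment.
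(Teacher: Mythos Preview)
Your proof is correct and is precisely the argument the paper intends: the corollary is stated immediately after Lemma~\ref{6.3} with no separate proof, and the derivation you give---applying Lemma~\ref{6.3} directly for the first identity, and for the second reducing to monomials, putting $a,b$ in normal form, and using the separation $s_p\le M<i_1$, $i_n>M\ge r_1$ at the junctions to verify the consecutive-distinct hypothesis on the concatenated word---is the natural and intended one. Your remark that the only point to check is that no merging occurs across the junctions is exactly right, and you handle it correctly.
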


\begin{lemma} \label{6.4}
For all $a,b,y\in\A_0$, we have $$\langle E(y)\hat a, \hat b \rangle =\langle y\widehat{E(a)}, \widehat{E[b]}\rangle.$$
\end{lemma}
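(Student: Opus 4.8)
The plan is to collapse both sides of the identity to ordinary scalar moments and then invoke the index-separation property of Lemma \ref{6.3}. First I would observe that, since $E$ and $\widehat{\,\cdot\,}$ are linear, both sides are linear in $y$, linear in $a$ and conjugate-linear in $b$; hence it suffices to treat the case where $a,b,y$ are monomials in the $x_i$, and I may fix $M$ with $a,b,y\in\A_{[M]}$. Using $E(y)=w^*\text{-}\lim_n\alpha^n(y)$ and pairing the WOT-convergent sequence against the fixed vectors $\hat a,\hat b$ gives
$$\langle E(y)\hat a,\hat b\rangle=\lim_n\phi(b^*\alpha^n(y)a).$$
On the other side, WOT-convergence of $\alpha^m(a)$ and of $\alpha^{m'}(b)$ yields weak convergence $\widehat{\alpha^m(a)}\to\widehat{E(a)}$ and $\widehat{\alpha^{m'}(b)}\to\widehat{E(b)}$ in $\HH$; substituting these in turn (the functionals $\eta\mapsto\langle\pi(y)\eta,\widehat{E(b)}\rangle$ and $\zeta\mapsto\langle\widehat{y\alpha^m(a)},\zeta\rangle$ being weakly continuous) and using $\alpha^{m'}(b)^*=\alpha^{m'}(b^*)$ produces the iterated limit
$$\langle y\,\widehat{E(a)},\widehat{E(b)}\rangle=\lim_m\lim_{m'}\phi\bigl(\alpha^{m'}(b^*)\,y\,\alpha^m(a)\bigr).$$

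The crux is then the remark that, once the shifts are pushed far enough, the two words appearing inside $\phi$ are the same monomial up to a relabelling of indices. For $n\ge M$ the word $b^*\alpha^n(y)a$ is a product of powers of the $x_i$ with a \emph{consecutive-distinct} index string: within each block it is consecutive-distinct by definition of a monomial, and at the two junctions the low indices ($\le M$) of $b^*$ and $a$ differ from the high indices ($>M$) of $\alpha^n(y)$. Exactly the same holds for $\alpha^{m'}(b^*)\,y\,\alpha^m(a)$ provided $m,m'>M$ and $|m-m'|>M$, so that the three blocks occupy pairwise disjoint index sets and no junction creates an adjacency of equal indices. Crucially, both words carry the identical exponent string, namely the exponents of $b^*$, then those of $y$, then those of $a$. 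Lemma \ref{6.3} erases all dependence of such a moment on the actual indices, identifying each of the two moments with $\phi$ of the single canonical monomial $x_1^{q_1}x_2^{q_2}\cdots$ attached to that common exponent string; call this number $\kappa$. Thus $\phi(b^*\alpha^n(y)a)=\kappa$ for all $n\ge M$, and $\phi(\alpha^{m'}(b^*)y\alpha^m(a))=\kappa$ for all large, well-separated $m,m'$, so both limits equal $\kappa$ and the two sides agree.

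I expect the genuine combinatorial content to be painless—Lemma \ref{6.3} does all the work, and the only thing to verify with care is that the block junctions never produce equal adjacent indices, which is precisely what the separations $n\ge M$ and $m,m'>M$, $|m-m'|>M$ guarantee. The main obstacle, such as it is, will be the analytic bookkeeping around the limits: confirming that WOT-convergence of the operators $\alpha^n(\cdot)$ transfers to weak convergence of the GNS vectors $\widehat{\alpha^n(\cdot)}$, and that the iterated limit on the right is legitimate because the doubly-indexed family is eventually constant (equal to $\kappa$) on the region where the three index blocks are disjoint and bounded above $M$. Once these routine points are dispatched, the equality $\langle E(y)\hat a,\hat b\rangle=\langle y\,\widehat{E(a)},\widehat{E(b)}\rangle$ follows immediately.
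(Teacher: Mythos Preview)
Your proposal is correct and follows essentially the same approach as the paper: reduce to monomials, unwind both sides to scalar moments via the shift limits defining $E$, and invoke Lemma~\ref{6.3} (the paper uses its Corollary~\ref{c63}) to see that these moments depend only on the common exponent string. The only cosmetic difference is that the paper chains the left side through the canonical monomial $\phi(x_1^{r_1}\cdots x_{l+m+n}^{t_n})$ and back into a shifted expression before taking limits, whereas you match both sides to a single constant $\kappa$; and the paper passes the final limit via normality of $\phi(\cdot\,yE[a])$ rather than via weak convergence of GNS vectors. Your separation hypothesis $|m-m'|>M$ is harmless but unnecessary, since Lemma~\ref{6.3} only requires \emph{consecutive} indices to differ, which already follows from $m,m'\ge M$.
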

\begin{proof}
Because an element in $\A_0$ is a finite linear combination of the noncommutative monomials, it suffices to show the property in the case:  $b^*=x^{r_1}_{i_1}\cdots x^{r_l}_{i_l}$, $y=x^{s_1}_{j_1}\cdots x^{s_m}_{j_m}$, $a=x^{t_1}_{k_1}\cdots x^{t_n}_{k_n}$, where $i_1\neq i_2\neq \cdots\neq i_l$, $j_1\neq...\neq j_m$, $k_1\neq...\neq k_n$ and  all the power indices are positive integers. Let $N=\max\{i_1,...,i_l,j_1,...,j_m,k_1,...,k_n\}$,
for all $L>N$, we have $i_l\neq j_1+L$ and $j_m+L\neq k_1$. Therefore, we have 
$$
\begin{array}{rcl}
\langle E(y)\hat a, \hat b \rangle&=& \lim\limits_{M\rightarrow \infty}\langle \alpha^M(y)\hat a, \hat b \rangle\\
&=& \langle \alpha^L(y)\hat a, \hat b \rangle\\
&=&\phi(x^{r_1}_{i_1}\cdots x^{r_1}_{i_l}x^{s_1}_{j_1+L}\cdots x^{s_m}_{j_m+L}x^{t_1}_{k_1}\cdots x^{t_n}_{k_n}),\\
\end{array}
$$

by  corollary \ref{c63},
$$
\begin{array}{rcl}
&=&\phi(x^{r_1}_{1}\cdots x^{r_l}_{l}x^{s_1}_{l+1}\cdots x^{s_m}_{l+m}x^{t_1}_{l+m+1}\cdots x^{t_n}_{l+m+n} )\\
&=&\phi(x^{r_1}_{1}\cdots x^{r_l}_{l}x^{s_1}_{l+1}\cdots x^{s_m}_{l+m}x^{t_1}_{l+m+1}\cdots x^{t_n}_{l+m+n} )\\
&=&\phi(x^{r_1}_{i_1+L}\cdots x^{r_1}_{i_l+L}x^{s_1}_{j_1}\cdots x^{s_m}_{j_m}x^{t_1}_{k_1+2L}\cdots x^{t_n}_{k_n+2L})\\
&=&\phi(\alpha^L(x^{r_1}_{i_1}\cdots x^{r_1}_{i_l})x^{s_1}_{j_1}\cdots x^{s_m}_{j_m}\alpha^{2L}(x^{t_1}_{k_1}\cdots x^{t_n}_{k_n}))\\
&=&\lim\limits_{M\rightarrow \infty}\phi(\alpha^N(b^*)y\alpha^{2L+M}(a))\\
&=& \phi(\alpha^L(b^*)yE[a]).\\
\end{array}
$$
Notice that $\{\alpha^L(b)|L\leq N\}$ is a bounded sequence of random variables which converges to $E[b^*]$ in WOT and $\phi(\cdot yE[a])$ is a normal linear functional on $\A$, we have 
$$
\begin{array}{rcl}
\phi(\alpha^L(b^*)yE[a])&=&\lim\limits_{M\rightarrow \infty}\phi(\alpha^M(b^*)yE[a])\\
&=&\phi(E[b]^*yE[a])\\
&=&\langle y\widehat{E[a]}, \widehat{E[b]}\rangle.\\
\end{array}
$$

\end{proof}

\begin{lemma} 
Let $\{y_n\}_{n\in\mathbb{N}}\subset \A_0$ be a bounded sequence of random variables such that $w^*-\lim y_n=0$, then $w^*-\lim E[y_n]=0.$
\end{lemma}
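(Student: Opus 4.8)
The plan is to reduce the weak$^*$ convergence of $E[y_n]$ to the hypothesis on $y_n$ by means of the identity established in Lemma~\ref{6.4}. First I would record that $E$ is norm-contractive on $\A_0$: since the shift $\alpha$ is norm-preserving (Lemma~\ref{E}) and $E[y]=w^*\text{-}\lim_m\alpha^m(y)$, lower semicontinuity of the operator norm under the weak$^*$ topology gives $\|E[y]\|\le\|y\|$. Consequently, if $\{y_n\}$ is bounded by a constant $C$, then $\{E[y_n]\}$ is bounded by $C$ as well, and each $E[y_n]$ lies in $\A\subset B(\HH)$.

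Next I would exploit that $\{\hat a\mid a\in\A_0\}$ is dense in $\HH$. On the bounded set $\{E[y_n]\}$ the weak$^*$ topology agrees with the weak operator topology, so it suffices to show $\langle E[y_n]\eta,\zeta\rangle\to 0$ for all $\eta,\zeta\in\HH$; and because $\{E[y_n]\}$ is uniformly bounded, a standard $\varepsilon/3$ argument reduces this further to checking that $\langle E[y_n]\hat a,\hat b\rangle\to 0$ for every pair $a,b\in\A_0$. The uniform bound is exactly what licenses the passage from the dense set of vectors $\hat a$ to arbitrary vectors of $\HH$.

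Now for fixed $a,b\in\A_0$, Lemma~\ref{6.4} rewrites the matrix coefficient as
$$\langle E[y_n]\hat a,\hat b\rangle=\langle y_n\,\widehat{E[a]},\,\widehat{E[b]}\rangle,$$
where $\widehat{E[a]}$ and $\widehat{E[b]}$ are two fixed vectors of $\HH$ independent of $n$. Since $w^*\text{-}\lim y_n=0$ means precisely that $\langle y_n\eta,\zeta\rangle\to 0$ for all $\eta,\zeta\in\HH$, specializing $\eta=\widehat{E[a]}$ and $\zeta=\widehat{E[b]}$ gives $\langle E[y_n]\hat a,\hat b\rangle\to 0$. As this holds for every $a,b\in\A_0$, combining it with the boundedness and the density reduction of the previous step yields $w^*\text{-}\lim E[y_n]=0$.

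The argument is short once Lemma~\ref{6.4} is available, and its content is almost entirely packaged in that identity. The only point that genuinely requires care is the reduction to the dense family of vectors $\hat a$: testing weak$^*$ convergence on a dense set of vectors is legitimate \emph{only} because $\{E[y_n]\}$ is uniformly bounded. Thus the first step, establishing that $E$ is contractive, is not a formality but the very hypothesis that makes the density reduction valid; this is the step I expect to be the main (if modest) obstacle.
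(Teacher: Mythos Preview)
Your proof is correct and follows essentially the same route as the paper: apply Lemma~\ref{6.4} to rewrite $\langle E[y_n]\hat a,\hat b\rangle$ as $\langle y_n\widehat{E[a]},\widehat{E[b]}\rangle$, use the hypothesis $w^*\text{-}\lim y_n=0$, and extend from the dense family $\{\hat a:a\in\A_0\}$ to all of $\HH$. The paper's proof omits the explicit verification that $\{E[y_n]\}$ is bounded, which you correctly identify as the step that licenses the density reduction; your argument via norm-preservation of $\alpha$ and weak$^*$ lower semicontinuity of the norm fills that gap cleanly.
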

\begin{proof}
For all $a, b\in\A_0$, we have 
$$
\lim\limits_{n}\langle E[y_n]\hat a,\widehat{E[b]}\rangle=\lim\limits_{n}\langle y_n\widehat{E[a]},\widehat{E[b]} \rangle=0.
$$
Since $\{\hat a|a\in\A_0\} $ is dense in $\HH_\xi$, we get our desired conclusion.
\end{proof}

Let $y\in\A$ and $\{y_n\}_{n\in\mathbb{N}}\subset \A_0$ be a bounded sequence such that $y_n$ converges to $y$ in WOT. For all $a, b\in\A_0$, we have 
$$
\lim\limits_{n}\langle E[y_n]\hat a,\hat b \rangle=\lim\limits_{n}\langle y_n\widehat{E[a]},\widehat{E[b]} \rangle=\langle y\widehat{E[a]},\widehat{E[b]} \rangle.
$$
Therefore, $\{E[y_n]\}_{n\in\mathbb{N}}$ converges to an element $y'$ in pointwise weak topology, by the lemma above, we see that $y'$ is independent of the choice of $\{y_n\}_{n\in\mathbb{N}}$. Since $\{E[y_n]\}_{n\in\mathbb{N}}\subset\T$, we have $y'\in\T$. By  now, we have defined a linear map $E:\A\rightarrow\T$ and  we have

\begin{lemma}
$E$ is normal.
\end{lemma}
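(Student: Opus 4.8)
The plan is to read off normality of $E$ directly from the identity
$$\langle E[y]\hat a,\hat b\rangle=\langle y\,\widehat{E[a]},\widehat{E[b]}\rangle,$$
which the discussion preceding the statement establishes for every $y\in\A$ and all $a,b\in\A_0$. The key observation is that, for \emph{fixed} $a,b\in\A_0$, the right-hand side is the vector functional $y\mapsto\langle y\,\widehat{E[a]},\widehat{E[b]}\rangle$, which is manifestly a normal functional on $\A$. Thus the identity already says that $y\mapsto\langle E[y]\hat a,\hat b\rangle$ is normal whenever $a,b\in\A_0$, and it remains only to propagate this through a density argument and then to $E$ itself.

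First I would record that $E$ is contractive. On $\A_0$ we have $E[a]=w^*\text{-}\lim_n\alpha^n(a)$ with each $\alpha^n$ norm preserving, so by weak$^*$ lower semicontinuity of the norm, $\|E[a]\|\le\liminf_n\|\alpha^n(a)\|=\|a\|$. Approximating a general $y\in\A$ by a bounded sequence $\{y_n\}\subset\A_0$ with $\|y_n\|\le\|y\|$ (possible by Kaplansky density) and using the same lower semicontinuity gives $\|E[y]\|\le\|y\|$ for all $y\in\A$. In particular $E$ sends bounded sets to bounded sets.

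Next I would verify WOT--WOT continuity of $E$ on bounded sets, which is one of the standard equivalent formulations of normality for a bounded map between von Neumann algebras. Let $(y_\lambda)$ be a bounded net in $\A$ with $y_\lambda\to y$ in WOT. For fixed $a,b\in\A_0$ the identity gives
$$\langle E[y_\lambda]\hat a,\hat b\rangle=\langle y_\lambda\,\widehat{E[a]},\widehat{E[b]}\rangle\longrightarrow\langle y\,\widehat{E[a]},\widehat{E[b]}\rangle=\langle E[y]\hat a,\hat b\rangle,$$
since WOT convergence of $y_\lambda$ tests in particular against the fixed vectors $\widehat{E[a]},\widehat{E[b]}$. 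Because $\{\hat a\mid a\in\A_0\}$ is dense in $\HH$ and the net $(E[y_\lambda])$ is uniformly bounded by the previous step, a routine three-$\varepsilon$ approximation upgrades this to $\langle E[y_\lambda]\eta,\zeta\rangle\to\langle E[y]\eta,\zeta\rangle$ for all $\eta,\zeta\in\HH$, i.e. $E[y_\lambda]\to E[y]$ in WOT. As $\T$ is WOT closed this is convergence inside $\T$, and normality of $E$ follows.

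I expect the only genuine content to be the uniform boundedness step together with this density upgrade; equivalently one may phrase the conclusion by saying that for arbitrary $\eta,\zeta\in\HH$ the functional $y\mapsto\langle E[y]\eta,\zeta\rangle$ is a norm limit, uniformly on the unit ball of $\A$ using $\|E\|\le1$, of the normal functionals $y\mapsto\langle E[y]\hat a,\hat b\rangle$, hence itself normal because the predual is norm closed in $\A^*$; this route yields normality of $E$ without invoking nets. The one point to watch is that the key identity, proved a priori only for test vectors arising from $\A_0$, is exactly strong enough to reach all of $\HH$ after these soft functional-analytic steps.
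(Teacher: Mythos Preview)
Your proof is correct and follows the same line as the paper's: both exploit the identity $\langle E[y]\hat a,\hat b\rangle=\langle y\,\widehat{E[a]},\widehat{E[b]}\rangle$ to test WOT convergence of $E[y_n]$ against the dense set $\{\hat a:a\in\A_0\}$. The paper's version is terser---it works with bounded sequences rather than nets and simply writes the chain of equalities and declares ``Therefore, $E$ is normal''---whereas you make explicit the contractivity of $E$ and the three-$\varepsilon$ density upgrade that the paper leaves implicit; your added care is justified but does not constitute a different approach.
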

\begin{proof}
Let $\{y_n\}_{n\in\mathbb{N}}\subset\A$ be a bounded  WOT convergent sequence of random variables such that $w^*-\lim\limits_{n\rightarrow \infty} y_n= y$. Then, we have 
$$\lim\limits_{n\rightarrow \infty}\langle E[y_n]\hat{a},\hat{b}\rangle=\lim\limits_{n\rightarrow \infty}\langle y_n\widehat{E[a]},\widehat{E[b]}\rangle=\langle y\widehat{E[a]},\widehat{E[b]}\rangle=\langle E[y]\hat{a},\hat{b}\rangle,$$
for all $a,b\in\A_0$. Therefore, $E$ is normal.
\end{proof}
Now, we can turn to show that $E$ is a conditional expectation from $\A$ to $\phi$:
\begin{lemma}\label{6.10} $E[a]=a$ for all $a\in \T$.
\end{lemma}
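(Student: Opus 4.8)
The plan is to test the desired identity $E[a]=a$ against the dense set of vectors $\{\hat b : b\in\A_0\}$ and to reduce everything to the moment rigidity encoded in Lemma \ref{6.3}. First I would record the consequence of Lemma \ref{6.4} that was already extended to all $y\in\A$ just before the normality lemma, namely
$$\langle E[y]\hat b,\hat c\rangle=\langle y\,\widehat{E[b]},\widehat{E[c]}\rangle,\qquad b,c\in\A_0,\ y\in\A.$$
Applying this with $y=a\in\T$, and using that $\{\hat b\}$ is dense in $\HH$ while $E[a]$ and $a$ are bounded operators, the assertion $E[a]=a$ is equivalent to the scalar identity
$$\phi(c^*\,a\,b)=\phi\big(E[c]^*\,a\,E[b]\big)\qquad\text{for all } b,c\in\A_0.$$
Thus the entire lemma is reduced to proving this one equality.

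Next I would prove the scalar identity by approximation together with a shift. By linearity and normality of $\phi$ it suffices to take $b,c$ to be reduced monomials in $x_1,\dots,x_m$ and, using $\T\subseteq\overline{\A_m}^{\,\mathrm{WOT}}$ and Kaplansky density, to take $a$ to be a reduced monomial in the variables $\{x_k:k>m\}$; the general $a\in\T$ then follows by inserting a bounded WOT-approximating net into the normal functionals $\phi(c^*(\cdot)b)$ and $\phi(E[c]^*(\cdot)E[b])$. For such monomials I would write $E[b]=w^{*}\text{-}\lim_N\alpha^N(b)$ and $E[c]=w^{*}\text{-}\lim_{N'}\alpha^{N'}(c)$, so that, by normality of $\phi$ and separate WOT-continuity of multiplication on bounded sets (and the fact that $\alpha^{N'}(c)^*\to E[c]^*$ in WOT on a bounded net),
$$\phi\big(E[c]^*\,a\,E[b]\big)=\lim_{N',N}\phi\big(\alpha^{N'}(c)^*\,a\,\alpha^{N}(b)\big).$$

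The heart of the argument is that for all sufficiently large $N,N'$ each term on the right already equals $\phi(c^*ab)$, and this is exactly where the boolean invariance enters through Lemma \ref{6.3}. The word $c^*ab$ is consecutive-distinct (inside each of $c^*$, $a$, $b$ by reducedness, and across the two junctions because $c^*$ and $b$ use indices $\le m$ while $a$ uses indices $>m$), and the shifted word $\alpha^{N'}(c)^*\,a\,\alpha^{N}(b)$ is consecutive-distinct as well once $N,N'$ exceed the largest index occurring in $a$. Since shifting does not alter the ordered sequence of exponents, the two words carry identical ordered exponent data, and Lemma \ref{6.3} identifies the value of $\phi$ on any consecutive-distinct word with the single standard moment $\phi(x_1^{p_1}\cdots x_\nu^{p_\nu})$ determined by that data. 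Hence every term equals $\phi(c^*ab)$, the limit collapses to $\phi(c^*ab)$, and the scalar identity—and with it $E[a]=a$—follows.

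I expect the main difficulty to be bookkeeping rather than conceptual: one must justify passing the iterated WOT limits through the normal functional and through the multiplications on bounded sets, confirm the adjoint behaves well in WOT on bounded nets, and check carefully that the junction indices remain distinct so that Lemma \ref{6.3} genuinely applies to the shifted word. The one substantive input is the observation that Lemma \ref{6.3} makes $\phi$ of a consecutive-distinct word depend only on its ordered exponents; this is precisely what forces the shifted and unshifted moments to coincide and thereby pins $E$ down to the identity on $\T$.
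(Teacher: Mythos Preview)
Your argument is correct, but it takes a different and somewhat longer route than the paper. The paper works directly with $\langle E[a]\hat c,\hat b\rangle$: it fixes $b,c\in\A_{[N]}$, approximates $a\in\T\subset\overline{\A_{N+1}}^{w^*}$ by a bounded sequence $(a_k)\subset\A_{N+1}$, and then observes that $\phi(b^*\alpha^n(a_k)c)=\phi(b^*a_kc)$ for every $n$ by plain \emph{exchangeability} (the indices of $a_k$ and of $b,c$ are already disjoint, so a single finite permutation does the job). Thus $\phi(b^*E[a_k]c)=\phi(b^*a_kc)$, and one WOT limit in $k$ finishes. By contrast, you first invoke the duality identity from Lemma~\ref{6.4} to convert the problem to $\phi(c^*ab)=\phi(E[c]^*aE[b])$, then shift the \emph{outer} factors $b,c$ separately, and appeal to the stronger moment rigidity of Lemma~\ref{6.3} to match the shifted and unshifted words. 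This works, but it costs you an iterated limit, the bookkeeping about junctions and adjoints that you flag, and the full boolean invariance rather than mere exchangeability; the paper's version shows that $E|_{\T}=\mathrm{id}$ is really an exchangeability fact. If you wanted to streamline your own approach without changing its spirit, taking $N=N'$ would already let you replace the appeal to Lemma~\ref{6.3} by exchangeability.
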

\begin{proof}
Let $a\in\T$,  $b,c\in\A_0$, then there exists an $N\in\mathbb{N} $ such that $a\in\overline{\A_{N+1}}^{w^*}$ and $b,c\in\A_{[N]}$. We can approximate $a$ in  WOT by a bounded sequence $(a_k)_{k\in\mathbb{N}}\subset \A_{N+1}$ in WOT. According to the definition of $E$ and the exchangeability, we have 
$$
\begin{array}{rcl}
\langle E[a]\hat c,\hat b \rangle
&=&\phi(b^*E[a]c)\\
&=&\lim\limits_k\phi(b^*E[a_k]c)\\
&=&\lim\limits_k\lim\limits_n\phi(b^*\alpha^n(a_k) c)\\
&=&\lim\limits_k\phi(b^*a_kc)\\
&=&\phi(b^*ac)=\langle a\hat c,\hat b \rangle.
\end{array}$$
The equation is true for all $b,c\in\A_0$, so $E[a]=a$.
\end{proof}

To check the bimodule property of $E$, we need to show that the quality of \ref{6.4} holds for all $x\in\A$:
\begin{lemma}\label{6.11}
For all $a,b,x\in\A$, we have  
$$\phi(aE[x]b)=\phi(E[a]xE[b]).$$
\end{lemma}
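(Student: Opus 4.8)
The claim is exactly the extension of Lemma \ref{6.4} from $\A_0$ to all of $\A$, and I would obtain it by a density-and-normality argument. First note that since the $x_i$ are selfadjoint, $\A_0$ is a $*$-algebra, and since $\alpha$ is a $C^*$-isomorphism the map $E$ is $*$-preserving, whence $E[b^*]=E[b]^*$. Unwinding the GNS inner products, Lemma \ref{6.4} reads $\phi(b^*E[y]a)=\phi(E[b]^*yE[a])$ for $a,b,y\in\A_0$; relabelling $a'=b^*$, $b'=a$, $x=y$ this becomes $\phi(a'E[x]b')=\phi(E[a']xE[b'])$ for $a',b',x\in\A_0$. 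Furthermore, the displayed identity proved just before the normality of $E$ already upgrades this to $\phi(a'E[x]b')=\phi(E[a']xE[b'])$ for all $a',b'\in\A_0$ and all $x\in\A$. It therefore remains only to remove the restriction $a,b\in\A_0$, and this I would do one variable at a time.

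To extend in the left variable, fix $x\in\A$ and $b\in\A_0$ and let $a\in\A$ be arbitrary. By the Kaplansky density theorem there is a bounded sequence $(a_n)_{n\in\mathbb{N}}\subset\A_0$ with $a_n\to a$ in the weak operator topology, which on bounded sets coincides with the weak-$*$ topology. Since $\alpha$ is isometric we have $\|E[a]\|\leq\|a\|$, so $E$ is a contraction, and by its normality the sequence $(E[a_n])$ is bounded and converges to $E[a]$ in WOT. For each $n$ the identity $\phi(a_nE[x]b)=\phi(E[a_n]xE[b])$ holds by the case already established. Now right multiplication by the fixed elements $E[x]b$ and $xE[b]$ is WOT-continuous, so $a_nE[x]b\to aE[x]b$ and $E[a_n]xE[b]\to E[a]xE[b]$ in WOT; applying the normal state $\phi$ and letting $n\to\infty$ yields $\phi(aE[x]b)=\phi(E[a]xE[b])$ for all $a,x\in\A$ and all $b\in\A_0$.

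An entirely analogous approximation removes the last restriction: fixing $a,x\in\A$ and choosing a bounded sequence $(b_n)\subset\A_0$ with $b_n\to b\in\A$ in WOT, one has $E[b_n]\to E[b]$ in WOT by normality of $E$, and passing to the limit on both sides of $\phi(aE[x]b_n)=\phi(E[a]xE[b_n])$, now using WOT-continuity of left multiplication by the fixed elements $aE[x]$ and $E[a]x$ together with normality of $\phi$, gives the full statement. The only point requiring care is that multiplication is merely separately, not jointly, WOT-continuous; this forces the three arguments to be approximated successively rather than simultaneously, and it is essential that each approximating sequence be kept bounded so that WOT convergence coincides with weak-$*$ convergence and the normality of both $\phi$ and $E$ can be invoked.
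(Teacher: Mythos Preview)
Your argument is correct and follows essentially the same route as the paper: both use Kaplansky density to approximate $a$ and $b$ by bounded sequences from $\A_0$ and pass to the limit one variable at a time, relying on the normality of $\phi$ and $E$ together with separate WOT-continuity of multiplication. The paper records this more tersely as an iterated limit $\lim_n\lim_m$, while you spell out the two approximation steps and the care needed about boundedness and separate continuity; the content is the same.
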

\begin{proof}
By the Kaplansky's density  theorem, there exist two bounded sequences $\{a_n\in\A_0|\|a_n\|\leq \|a\|, n\in\mathbb{N}\}$ and $\{b_n\in\A_0|\|b_n\|\leq \|b\|, n\in\mathbb{N}\}$ which  converge to $a$ and $b$ in WOT, respectively. Since $\phi$ and $E$ are normal, we have
$$
\begin{array}{rcl}
\phi(aE[x]b)&=& \lim\limits_{n}\phi(a_nE[x]b)\\ 
&=& \lim\limits_{n}\lim\limits_{m}\phi(a_nE[x]b_m)\\ 
&=& \lim\limits_{n}\lim\limits_{m}\phi(E[a_n]xE[b_m])\\
&=& \lim\limits_{n}\phi(E[a_n]xE[b])\\
&=&\phi(E[a]xE[b]).\\
\end{array}
$$
\end{proof}

\begin{lemma}
$E[ax]=aE[x]$ for all $a\in \T$ and $x\in\A$.
\end{lemma}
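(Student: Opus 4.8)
The plan is to settle this by the concrete shift structure of $E$ together with exchangeability, rather than trying to extract it formally from Lemmas \ref{6.10} and \ref{6.11}. A word of warning about the latter route: Lemma \ref{6.11} is symmetric in its two outer arguments, and since $E$ is a $*$-map and $\T=\T^*$, taking adjoints shows that the left identity $E[ax]=aE[x]$ (for all $a\in\T$) is equivalent to the right identity $E[ya]=E[y]a$; feeding either of these into Lemma \ref{6.11} merely reproduces the other. So the genuine input must come from the defining limit $E[y]=w^*\text{-}\lim_{n}\alpha^{n}(y)$ on $\A_0$ and from Corollary \ref{c63}.

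First I would reduce to $x\in\A_0$. Both maps $x\mapsto E[ax]$ and $x\mapsto aE[x]$ are weak$^*$-continuous on bounded sets, being compositions of fixed (left or right) multiplications with the normal map $E$; since $\A_0$ is weak$^*$-dense with bounded approximants, it suffices to treat $x=p(x_1,\dots,x_m)\in\A_{[m]}$. Moreover $E[ax]$ and $aE[x]$ both lie in $\T$ (as $a,E[x]\in\T$ and $\T$ is an algebra), and $\pi$ is faithful because $\phi$ is non-degenerated; hence it is enough to prove $\langle E[ax]\hat c,\hat b\rangle=\langle aE[x]\hat c,\hat b\rangle$ for all $b,c\in\A_0$.

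Now fix such $b,c$, say with all indices at most $m'$, and put $N=\max(m,m')$. Since $a\in\T\subseteq\overline{\A_N}^{w^*}$, Kaplansky's theorem provides a bounded sequence $a_k\in\A_N$ with $a_k\to a$ in WOT. Using normality of $E$ and of $\phi(b^*\,\cdot\,c)$, I would write $\langle E[ax]\hat c,\hat b\rangle=\lim_k\phi(b^*E[a_kx]c)$ and $\langle aE[x]\hat c,\hat b\rangle=\lim_k\phi(b^*a_kE[x]c)$. As $a_kx\in\A_0$, the defining limit for $E$ gives, for each $k$, the expressions $\phi(b^*E[a_kx]c)=\lim_n\phi(b^*\alpha^n(a_k)\alpha^n(x)c)$ and $\phi(b^*a_kE[x]c)=\lim_n\phi(b^*a_k\alpha^n(x)c)$.

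The crux is that these two inner limits coincide for each fixed $k$. Once $n$ is large, $\phi(b^*\alpha^n(a_k)\alpha^n(x)c)$ and $\phi(b^*a_k\alpha^n(x)c)$ are moments of one and the same abstract word $b^*\!\cdot a_k\cdot x\cdot c$ evaluated at two index assignments: in the first the $a_k$-block is shifted up together with the $x$-block, in the second it stays at its original indices in $(N,N+K_k]$. Because $N\ge m'$, in both assignments the indices of $a_k$ are disjoint from those of $b$ and $c$, and the shifted $x$-block is disjoint from everything else; hence the two assignments induce exactly the same coincidence pattern on the positions of the word, the only repetitions being those internal to the fixed $b$ and $c$. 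By exchangeability, made explicit in Corollary \ref{c63}, the two moments are therefore equal. Passing to the limit first in $n$ and then in $k$ yields $\langle E[ax]\hat c,\hat b\rangle=\langle aE[x]\hat c,\hat b\rangle$, and since $b,c\in\A_0$ are arbitrary and $\pi$ is faithful, $E[ax]=aE[x]$.

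I expect the main obstacle to be precisely this moment comparison. It is not formal from Lemmas \ref{6.10} and \ref{6.11}; the real content is that, after using the shift to push the local variable $x$ out to infinity, the tail element $a$, approximated by $a_k$ whose support one is free to place beyond the indices of the test vectors $b,c$, may itself be shifted without altering any coincidences. The only delicate bookkeeping is choosing $N\ge m'$ \emph{depending on} $b,c$ so that the two index configurations realize the same partition; everything else is routine normality and density.
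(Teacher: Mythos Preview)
Your direct shift argument is essentially correct, but your opening warning is mistaken: the lemma \emph{does} follow purely formally from Lemmas~\ref{6.10} and~\ref{6.11}, and this is precisely the paper's proof. The ingredient that breaks the left/right circularity you describe is Lemma~\ref{6.10}, i.e.\ $E|_{\T}=\mathrm{id}$. For $b,c\in\A_0$ one applies Lemma~\ref{6.11} four times, each time using Lemma~\ref{6.10} to insert or remove an $E$ on a $\T$-element at the appropriate slot:
\[
\phi(c^*E[ax]b)=\phi(E[c^*]\,ax\,E[b])=\phi\big((E[c^*]a)\,E[x]\,b\big)=\phi\big(E[c^*]\,(aE[x])\,E[b]\big)=\phi(c^*\,aE[x]\,b),
\]
where the second and third equalities use $E[c^*]a\in\T$ and $aE[x]\in\T$ respectively, and the third and fourth also use $E[c^*]\in\T$. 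No appeal to the shift or to exchangeability is needed beyond what is already encoded in Lemma~\ref{6.11}.

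Your alternative route also works, but is heavier than necessary. One small correction: the moment comparison you call ``the crux'' is not an instance of Corollary~\ref{c63} (whose hypothesis is that \emph{consecutive} indices differ), but of plain exchangeability: for $n$ large one exhibits a finite permutation fixing the indices appearing in $b^*$, $c$, and $\alpha^n(x)$ while translating those of $a_k$ by $n$. The paper's formal argument buys brevity and isolates a reusable trick (moving $E$ across $\T$-flanked products using only $\phi(uE[w]v)=\phi(E[u]wE[v])$ and $E|_{\T}=\mathrm{id}$); your argument gives an independent check closer to the concrete construction, at the cost of another round of Kaplansky approximation and index bookkeeping.
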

\begin{proof}
For all $b,c\in\A_{0}$, by lemma \ref{6.11} and Lemma \ref{6.10}, we have
$$
\begin{array}{rcl}
\langle E[ax]\hat b,\hat c\rangle&=& \phi(c^*E[ax]b)\\
&=&\phi(E[c^*]axE[b])\\
&=&\phi((E[c^*]a)xE[b]).
\end{array}
$$
since $E[c^*]a\in\T$, $E[E[c^*]a]=E[c^*]a$, then
$$
\begin{array}{rcl}
\phi((E[c^*]a)xE[b])&=& \phi(E[E[c^*]a]xE[b])\\
&=& \phi(E[c^*]aE[x]b)\\
&=& \phi(E[c^*]E[aE[x]]b)\\
&=& \phi(E[E[c^*]](aE[x]) E[b])\\
&=& \phi(E[c^*](aE[x]) E[b])\\
&=& \phi(c^*E[aE[x]]b)\\
&=& \phi(c^*aE[x]b)\\
&=&\langle aE[x]\hat b,\hat c\rangle.
\end{array}
$$

Since $b,c$ are arbitrary, we get our desired conclusion
\end{proof}

\begin{lemma}\label{6.6}
$$E[x^{k_1}_{i_1}\cdots x^{k_s}_{i_s}\cdots x^{k_t}_{i_t}\cdots x^{k_n}_{i_n}]=E[x^{k_1}_{i_1}\cdots \alpha^N(x^{k_s}_{i_s}\cdots x^{k_t}_{i_t})\cdots x^{k_n}_{i_n}]$$
whenever $ i_1\neq i_2\neq\cdots\neq i_n$, $N\geq\max\{i_1,...,i_n\}$, $k_j$'s are positive integers.
\end{lemma}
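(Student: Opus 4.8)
The plan is to reduce the equality of these two elements of $\T$ to an equality of scalar moments and then invoke faithfulness. Write $A=x^{k_1}_{i_1}\cdots x^{k_n}_{i_n}$ for the left-hand argument and $B=x^{k_1}_{i_1}\cdots x^{k_{s-1}}_{i_{s-1}}\,\alpha^N(x^{k_s}_{i_s}\cdots x^{k_t}_{i_t})\,x^{k_{t+1}}_{i_{t+1}}\cdots x^{k_n}_{i_n}$ for the right-hand one; both lie in $\A_0$, so $E[A]$ and $E[B]$ are defined and lie in $\T\subset\A$. Since $\phi$ is non-degenerated its GNS representation is faithful, so $E[A]=E[B]$ will follow once I check $\langle E[A]\hat b,\hat c\rangle=\langle E[B]\hat b,\hat c\rangle$ for all $b,c\in\A_0$, because $\{\hat b\mid b\in\A_0\}$ is dense in $\HH$.

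First I would rewrite each inner product through the defining $w^*$-limit of $E$. For fixed $b,c\in\A_0$ the map $y\mapsto\phi(c^*yb)$ is a normal functional, and $\{\alpha^m(A)\}_m$ is bounded with $E[A]=w^*\text{-}\lim_m\alpha^m(A)$, hence
$$\langle E[A]\hat b,\hat c\rangle=\phi(c^*E[A]b)=\lim_m\phi(c^*\alpha^m(A)b),$$
and the analogous identity holds for $B$. Thus it suffices to prove $\phi(c^*\alpha^m(A)b)=\phi(c^*\alpha^m(B)b)$ for all sufficiently large $m$.

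The core observation is that both $\alpha^m(A)$ and $\alpha^m(B)$ are words $x^{k_1}_{p_1}\cdots x^{k_n}_{p_n}$ whose index sequences are \emph{alternating} (consecutive indices distinct) and, for large $m$, exceed every index occurring in $b$ and $c$. For $\alpha^m(A)$ this is immediate from $i_1\neq\cdots\neq i_n$. For $\alpha^m(B)$ I would check the two junctions created by the inner shift: since $N\geq\max\{i_1,\dots,i_n\}$ we have $i_{s-1}\leq N<i_s+N$ and $i_t+N>N\geq i_{t+1}$, so the shifted middle block $x^{k_s}_{i_s+N}\cdots x^{k_t}_{i_t+N}$ is separated from its neighbours by distinct indices, while every remaining neighbour relation is inherited from $A$; hence $\alpha^m(B)$ is alternating as well. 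Choosing $m$ so large that all indices of $\alpha^m(A)$ and $\alpha^m(B)$ exceed $M$, the largest index appearing in $b$ or $c$, and using $c^*,b\in\A_{[M]}$ (the $x_i$ being selfadjoint), Corollary \ref{c63} applies: the moment $\phi(c^*\,x^{k_1}_{p_1}\cdots x^{k_n}_{p_n}\,b)$ depends only on the ordered power sequence $(k_1,\dots,k_n)$ and not on the particular alternating indices $p_\ell>M$. As $\alpha^m(A)$ and $\alpha^m(B)$ carry the same ordered powers, the two moments coincide.

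Passing to the limit in $m$ gives $\langle E[A]\hat b,\hat c\rangle=\langle E[B]\hat b,\hat c\rangle$ for all $b,c\in\A_0$, and faithfulness of the GNS representation then yields $E[A]=E[B]$. I expect the only delicate point to be the bookkeeping at the two junctions of the shifted block, that is, verifying that displacing an interior segment by $\alpha^N$ preserves the alternating pattern of the whole word; once that is secured, the ``moreover'' part of Corollary \ref{c63} does the real work, and normality together with faithfulness are used only to move between $E$ and its defining shifts and between operators and scalars.
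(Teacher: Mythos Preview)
Your proposal is correct and follows essentially the same route as the paper: reduce to inner products $\langle E[\cdot]\hat b,\hat c\rangle$ via the defining $w^*$-limit of $E$, verify that the shifted word remains alternating at the two junctions because $N\geq\max\{i_1,\dots,i_n\}$, and then apply the ``moreover'' clause of Corollary~\ref{c63} to conclude that the two moments agree. The paper does the same, only it fixes a single $M$ with $b,c\in\A_{[M]}$ and notes that the limit stabilises at $l=M$ rather than arguing ``for all sufficiently large $m$''; the content is identical.
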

\begin{proof} 
Given $a,b\in\A_0$, then there exists an $M$ such that $a,b\in\A_{[M]}$. Then, we have
$$
\begin{array}{lcl}
&&\langle E[x^{k_1}_{i_1}\cdots x^{k_s}_{i_s}\cdots x^{k_t}_{i_t}\cdots x^{k_n}_{i_n}]\hat{a},\hat{b}\rangle\\
&=&\lim\limits_{l\rightarrow \infty}\langle \alpha^l(x^{k_1}_{i_1}\cdots x^{k_s}_{i_s}\cdots x^{k_t}_{i_t}\cdots x^{k_n}_{i_n})\hat{a},\hat{b}\rangle\\
&=&\langle \alpha^M(x^{k_1}_{i_1}\cdots x^{k_s}_{i_s}\cdots x^{k_t}_{i_t}\cdots x^{k_n}_{i_n})\hat{a},\hat{b}\rangle\\
&=&\langle x^{k_1}_{i_1+M}\cdots x^{k_s}_{i_s+M}\cdots x^{k_t}_{i_t+M}\cdots x^{k_n}_{i_n+M}\hat{a},\hat{b}\rangle,\\
\end{array}
$$  
by lemma \ref{c63} and $i_1+M\neq\cdots\neq i_{s-1}+M\neq i_s+M+N\neq i_{s+1}+M+N\neq\cdots\neq i_{t}+M+N\neq i_{t+1}+M\neq \cdots i_{n}+M$,
$$
\begin{array}{lcl}
&&\langle x^{k_1}_{i_1+M}\cdots x^{k_s}_{i_s+M}\cdots x^{k_t}_{i_t+M}\cdots x^{k_n}_{i_n+M}\hat{a},\hat{b}\rangle\\
&=&\langle x^{k_1}_{i_1+M}\cdots x^{k_s}_{i_s+M+N}\cdots x^{k_t}_{i_t+M+N}\cdots x^{k_n}_{i_n+M}\hat{a},\hat{b}\rangle\\
&=&\langle \alpha^M(x^{k_1}_{i_1}\cdots \alpha^N(x^{k_s}_{i_s}\cdots x^{k_t}_{i_t})\cdots x^{k_n}_{i_n})\hat{a},\hat{b}\rangle\\
&=&\lim\limits_{l\rightarrow \infty}\langle \alpha^l(x^{k_1}_{i_1}\cdots \alpha^N(x^{k_s}_{i_s}\cdots x^{k_t}_{i_t})\cdots x^{k_n}_{i_n})\hat{a},\hat{b}\rangle\\
&=&\langle E[x^{k_1}_{i_1}\cdots \alpha^N(x^{k_s}_{i_s}\cdots x^{k_t}_{i_t})\cdots x^{k_n}_{i_n}]\hat{a},\hat{b}\rangle.\\
\end{array}
$$
Because $\{\hat{a}|a\in\A_0\}$ is dense in $\HH$, the proof is complete.
\end{proof}

\begin{corollary}\label{6.7}
$$E[x^{k_1}_{i_1}\cdots x^{k_s}_{i_s}\cdots x^{k_t}_{i_t}\cdots x^{k_n}_{i_n}]=E[x^{k_1}_{i_1}\cdots E[x^{k_s}_{i_s}\cdots x^{k_t}_{i_t}]\cdots x^{k_n}_{i_n}],$$
whenever $ i_1\neq i_2\neq\cdots\neq i_n$.
\end{corollary}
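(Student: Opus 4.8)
The plan is to start from Lemma \ref{6.6} and let the shift parameter $N$ tend to infinity, recognizing the WOT limit of $\alpha^N$ as the conditional expectation $E$. Fix $i_1\neq\cdots\neq i_n$ and set $N_0=\max\{i_1,\dots,i_n\}$. Then for every $N\geq N_0$, Lemma \ref{6.6} gives
$$E[x^{k_1}_{i_1}\cdots x^{k_s}_{i_s}\cdots x^{k_t}_{i_t}\cdots x^{k_n}_{i_n}]=E[x^{k_1}_{i_1}\cdots \alpha^N(x^{k_s}_{i_s}\cdots x^{k_t}_{i_t})\cdots x^{k_n}_{i_n}],$$
and crucially the left-hand side is independent of $N$. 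Writing $y=x^{k_s}_{i_s}\cdots x^{k_t}_{i_t}$ for the middle block, the defining property $E[y]=w^*\text{-}\lim_N\alpha^N(y)$ gives $\alpha^N(y)\to E[y]$ in WOT as $N\to\infty$; moreover, since $\alpha$ is norm-preserving (Lemma \ref{E}), the sequence $\{\alpha^N(y)\}_N$ is bounded, with $\|\alpha^N(y)\|=\|y\|$.

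Next I would pass this WOT limit through the surrounding multiplications and then through $E$. Denote the fixed left and right factors by $L=x^{k_1}_{i_1}\cdots x^{k_{s-1}}_{i_{s-1}}$ and $R=x^{k_{t+1}}_{i_{t+1}}\cdots x^{k_n}_{i_n}$. Since multiplication by a fixed element on either side is WOT-continuous, $L\,\alpha^N(y)\,R\to L\,E[y]\,R$ in WOT, and this sequence stays norm-bounded because $L$ and $R$ have fixed norms while $\{\alpha^N(y)\}_N$ is bounded. Because $E$ is normal, it is WOT-continuous on bounded sets, so
$$E[L\,\alpha^N(y)\,R]\longrightarrow E[L\,E[y]\,R]=E[x^{k_1}_{i_1}\cdots E[x^{k_s}_{i_s}\cdots x^{k_t}_{i_t}]\cdots x^{k_n}_{i_n}].$$
Combining this with the $N$-independence of the left-hand side of the displayed Lemma \ref{6.6} identity yields exactly the claimed formula.

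The main obstacle, though largely bookkeeping, is justifying that the two limit-interchanges are legitimate: multiplication in a von Neumann algebra is only \emph{separately} WOT-continuous in general, so I must rely on the factors $L$ and $R$ being \emph{fixed} (rather than jointly varying) to move the limit through the product, and I must invoke the \emph{normality} of $E$ together with the \emph{boundedness} of $\{\alpha^N(y)\}_N$ to move the limit inside $E$. This boundedness is precisely what the isometric (norm-preserving) nature of $\alpha$ from Lemma \ref{E} supplies, so no separate uniform-boundedness estimate is needed.
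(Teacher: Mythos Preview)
Your argument is correct and follows essentially the same route as the paper: apply Lemma \ref{6.6} for all large shifts, then pass to the WOT limit using boundedness of $\{\alpha^N(y)\}$, separate WOT-continuity of multiplication by the fixed flanking factors, and normality of $E$. The paper additionally rewrites the WOT limit as a Ces\`aro average $\frac{1}{l}\sum_{s=1}^{l}\alpha^{N+s}(y)$ before passing to the limit, but since the sequence $\alpha^N(y)$ itself is already shown to converge in WOT this extra averaging is superfluous, and your more direct version is cleaner.
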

\begin{proof}
Let $N=\max\{i_1,...,i_n\}$. Since $E[x^{k_s}_{i_s}\cdots x^{k_t}_{i_t}]=w^*-\lim\limits_{l\rightarrow\infty} \alpha^l(x^{k_s}_{i_s}\cdots x^{k_t}_{i_t})$, we have
$$E[x^{k_s}_{i_s}\cdots x^{k_t}_{i_t}]=w^*-\lim\limits_{l\rightarrow\infty} \frac{1}{l}\sum\limits_{s=1}^l\alpha^{N+l}(x^{k_s}_{i_s}\cdots x^{k_t}_{i_t}).$$
Then, by lemma \ref{6.6},
$$
\begin{array}{rcl}
&&E[x^{k_1}_{i_1}\cdots x^{k_s}_{i_s}\cdots x^{k_t}_{i_t}\cdots x^{k_n}_{i_n}]\\
&=& \frac{1}{l}\sum\limits_{s=1}^lE[x^{k_1}_{i_1}\cdots \alpha^{N+l}(x^{k_s}_{i_s}\cdots x^{k_t}_{i_t})\cdots x^{k_n}_{i_n}]\\
&=&E[x^{k_1}_{i_1}\cdots [w^*-\lim\limits_{l\rightarrow\infty} \frac{1}{l}\sum\limits_{s=1}^l\alpha^{N+l}(x^{k_s}_{i_s}\cdots x^{k_t}_{i_t})]\cdots x^{k_n}_{i_n}]\\
&=&E[x^{k_1}_{i_1}\cdots E[x^{k_s}_{i_s}\cdots x^{k_t}_{i_t}]\cdots x^{k_n}_{i_n}].\\
\end{array}
$$
The last two equations follow the normality of $E$ and $$x^{k_1}_{i_1}\cdots [\frac{1}{l}\sum\limits_{s=1}^l\alpha^{N+l}(x^{k_s}_{i_s}\cdots x^{k_t}_{i_t})]\cdots x^{k_n}_{i_n} \rightarrow x^{k_1}_{i_1}\cdots E[x^{k_s}_{i_s}\cdots x^{k_t}_{i_t}]\cdots x^{k_n}_{i_n}$$
in WOT.
\end{proof}

\begin{lemma}\label{6.8}
$$E[b_1x^{k_1}_{i_1}b_2\cdots b_sx^{k_s}_{i_s}\cdots b_tx^{k_t}_{i_t}\cdots b_nx^{k_n}_{i_n}]=E[b_1x^{k_1}_{i_1}b_2\cdots E[b_sx^{k_s}_{i_s}\cdots b_tx^{k_t}_{i_t}]\cdots b_nx^{k_n}_{i_n}],$$
whenever $ i_1\neq i_2\neq\cdots\neq i_n$, $k_1,...k_n$ are positive integers, $b_1,...b_n\in \A_{N+1}$ where $N=\max\{i_1,...,i_n\}$.
\end{lemma}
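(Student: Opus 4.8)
The plan is to reproduce the proof of Corollary~\ref{6.7} after expanding the whole product into a single reduced word in the variables $x_k$, using Lemma~\ref{6.6} as the shift identity. First, both sides of the claimed identity are multilinear in $b_1,\dots,b_n$: the inner expression $E[b_sx^{k_s}_{i_s}\cdots b_tx^{k_t}_{i_t}]$ is linear in each of $b_s,\dots,b_t$ because $E$ is linear, and the outer $E$ is linear. Hence by linearity it suffices to treat the case in which every $b_j$ is a \emph{reduced} monomial in the generators $\{x_k\mid k>N+1\}$ of the non-unital algebra $\A_{N+1}$. For such $b_j$, the fully written-out product $W:=b_1x^{k_1}_{i_1}\cdots b_nx^{k_n}_{i_n}$ is a reduced word in the $x_k$'s: within each monomial $b_j$ consecutive indices differ by definition, and at every junction between a factor $x^{k_j}_{i_j}$ and an adjacent $b$-factor the indices differ because $i_j\le N$ while every index occurring in the $b$'s is at least $N+2$. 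Write $N^\ast\ge N+1$ for the largest index occurring in $W$.

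The singled-out block $B:=b_sx^{k_s}_{i_s}\cdots b_tx^{k_t}_{i_t}$ is a contiguous sub-word of $W$ and lies in $\A_0$. For any $L\ge N^\ast$ the map $\alpha^L$ raises every index of $B$ strictly above $N^\ast$, hence strictly above all indices of the surrounding factors, so the word obtained by replacing $B$ with $\alpha^L(B)$ is still reduced and meets the hypotheses of Lemma~\ref{6.6} (applied to the expanded word $W$, with $\alpha$ a homomorphism so that $\alpha^L$ of the expanded block equals the expansion of $\alpha^L(B)$). Lemma~\ref{6.6} therefore gives, for every $L\ge N^\ast$,
$$E[W]=E\big[b_1x^{k_1}_{i_1}\cdots b_{s-1}x^{k_{s-1}}_{i_{s-1}}\,\alpha^L(B)\,b_{t+1}x^{k_{t+1}}_{i_{t+1}}\cdots b_nx^{k_n}_{i_n}\big].$$

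To finish, I would let $L\to\infty$ exactly as in Corollary~\ref{6.7}. Since $B\in\A_0$ we have $\alpha^L(B)\to E[B]$ in WOT, and the sequence $\{\alpha^L(B)\}$ is bounded because $\alpha$ is isometric; as two-sided multiplication by the fixed bounded elements $b_1x^{k_1}_{i_1}\cdots b_{s-1}x^{k_{s-1}}_{i_{s-1}}$ and $b_{t+1}x^{k_{t+1}}_{i_{t+1}}\cdots b_nx^{k_n}_{i_n}$ is WOT-continuous on bounded sets, the inserted word converges in WOT to $b_1x^{k_1}_{i_1}\cdots E[B]\cdots b_nx^{k_n}_{i_n}$. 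Normality of $E$ then yields
$$E\big[b_1x^{k_1}_{i_1}\cdots E[B]\cdots b_nx^{k_n}_{i_n}\big]=\lim_{L\to\infty}E\big[b_1x^{k_1}_{i_1}\cdots \alpha^L(B)\cdots b_nx^{k_n}_{i_n}\big]=E[W],$$
the last equality holding because the quantity is constantly $E[W]$ once $L\ge N^\ast$. This is the assertion. The only delicate point, and the main obstacle, is the index bookkeeping showing that the expanded product is a reduced word and remains reduced after the block is shifted by $L\ge N^\ast$; this is exactly where the index gap $i_j\le N<N+2\le(\text{indices in the }b\text{'s})$ coming from the hypothesis $b_j\in\A_{N+1}$ is used, and it is what lets Lemma~\ref{6.6} (hence Corollary~\ref{c63}) apply verbatim.
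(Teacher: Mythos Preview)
Your argument is correct and matches the paper's approach: reduce by linearity to the case where each $b_j$ is a monomial in the generators $x_k$ with $k>N$, observe that the fully expanded product is a reduced word thanks to the index gap at each junction, and then apply the shift--factorization identity to the contiguous sub-block. The only cosmetic difference is that the paper invokes Corollary~\ref{6.7} directly on the expanded word, whereas you inline that corollary's proof by appealing to Lemma~\ref{6.6} and passing to the WOT limit; the content is the same.
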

\begin{proof}
By the linearity of $E$, we can assume that $b_i$'s are \lq\lq monomials",  i.e. $b_j=x_{i_{j,1}}\cdots x_{i_{j,r_j}}$ where $i_{j,j'}$'s are greater than $N$. Then,
$$b_1x^{k_1}_{i_1}b_2\cdots b_sx^{k_s}_{i_s}\cdots b_tx^{k_t}_{i_t}\cdots b_nx^{k_n}_{i_n}=b_1x^{k_1}_{i_1}b_2\cdots x_{i_{s,1}}\cdots x_{i_{s,r_s}}x^{k_s}_{i_s}\cdots x_{i_{t,1}}\cdots x_{i_{t,r_t}}x^{k_t}_{i_t}\cdots b_nx^{k_n}_{i_n},$$
$i_{s,1}\geq N+1>i_{s-1}$ and $i_{t,r_t}\geq N+1>i_{t+1}$. Therefore, by lemma \ref{6.7},
$$
\begin{array}{rcl}
&&E[b_1x^{k_1}_{i_1}b_2\cdots x_{i_{s,1}}\cdots x_{i_{s,r_s}}x^{k_s}_{i_s}\cdots x_{i_{t,1}}\cdots x_{i_{t,r_t}}x^{k_t}_{i_t}\cdots b_nx^{k_n}_{i_n}]\\
&=&E[b_1x^{k_1}_{i_1}b_2\cdots E[x_{i_{s,1}}\cdots x_{i_{s,r_s}}x^{k_s}_{i_s}\cdots x_{i_{t,1}}\cdots x_{i_{t,r_t}}x^{k_t}_{i_t}]\cdots b_nx^{k_n}_{i_n}]\\
&=&E[b_1x^{k_1}_{i_1}b_2\cdots E[b_sx^{k_s}_{i_s}\cdots b_tx^{k_t}_{i_t}]\cdots b_nx^{k_n}_{i_n}].
\end{array}
$$

\end{proof}

\begin{proposition}\label{7.1} Let $(\A,\phi)$ be a $W^*$-probability space and $(x_i)_{i\in\mathbb{N}}$ be a sequence of of selfadjoint random variables in $\A$ whose joint distribution is invariant of under the boolean permutations. Let $E$ be the  conditional expectation onto the non-unital tail algebra $\T$ of the sequence. Then, $E$ has the following factorization property: for all $n,k\in\mathbb{N}$, polynomials $p_1,...,p_n\in\T\langle X_1,...,X_k\rangle_0$ and $i_1,...,i_n\in\{1,...,k\}$, we have
$$ E[p_1(x_{i_1})\cdots p_l(x_{i_m})\cdots p_n(x_{i_n})]=E[p_1(x_{i_1})\cdots E[p_l(x_{i_m})]\cdots p_n(x_{i_n})].$$
\end{proposition}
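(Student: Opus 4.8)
The plan is to deduce the $\T$-valued factorization from the pure-power factorization of Corollary \ref{6.7} by an approximation argument, using the normality of $E$.

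By linearity of $E$ it suffices to treat the case where each $p_j$ is a single $\T$-monomial, so that after merging powers that are separated only by scalars,
$$p_j(x_{i_j})=c_{j,0}\,x_{i_j}^{m_{j,1}}\,c_{j,1}\cdots x_{i_j}^{m_{j,r_j}}\,c_{j,r_j},$$
with every $c_{j,s}\in\T\cup\C 1_\A$ and every $m_{j,s}\ge 1$. Put $N=\max\{i_1,\dots,i_n\}$. Since $\T\subseteq\overline{\A_{N}}^{\,w^*}$, each genuine tail coefficient $c_{j,s}$ is, by the Kaplansky density theorem, a bounded strong-operator limit of elements of $\A_N$, that is, of sums of monomials in the generators $x_k$ with $k>N$. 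Moreover, because $\T\subseteq\overline{\A_m}^{\,w^*}$ for every $m$, I may choose the approximants of the finitely many coefficients $c_{j,s}$ to be supported on disjoint, successively higher ranges of indices, all exceeding $N$.

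Since multiplication is jointly strong-operator continuous on bounded sets and $E$ is normal, it is enough to prove the identity after every genuine tail coefficient has been replaced by such a high-index approximant, and then pass to the limit on both sides. After this replacement the word $p_1(x_{i_1})\cdots p_n(x_{i_n})$ is an honest product of powers of generators, and the key point is that it is reduced: inside a single $p_j$ two powers of $x_{i_j}$ are either merged or separated by a high-index monomial; across the junction of $p_j$ and $p_{j+1}$ two low-index powers can be adjacent only when the bordering coefficients are scalars, and then their indices are $i_j\neq i_{j+1}$ by hypothesis, while the disjointness of the index ranges prevents two coefficient-approximants from merging across a junction. Hence the sub-word coming from $p_l(x_{i_l})$ is a well-delimited contiguous block of a reduced word, and Corollary \ref{6.7} gives
$$E[\cdots p_l(x_{i_l})\cdots]=E[\cdots E[p_l(x_{i_l})]\cdots]$$
for the approximated word. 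Letting the approximations refine and invoking the normality of $E$ (first to turn the inner $E$ of the approximated block into $E[p_l(x_{i_l})]$, then on the outer $E$) recovers the assertion.

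The main obstacle is the combinatorial bookkeeping of the third step: one must verify that substituting the high-index approximants really produces a reduced word meeting the hypothesis $i_1\neq\cdots\neq i_n$ of Corollary \ref{6.7}, and that the extracted block is exactly $p_l(x_{i_l})$ rather than a larger block created by an accidental merge at a junction — which is precisely what the disjoint-range choice of approximants rules out. The remaining technical care lies in justifying the interchange of these bounded strong-operator limits with multiplication and with $E$, for which the boundedness of the Kaplansky approximants and the normality of $E$ are exactly what is needed.
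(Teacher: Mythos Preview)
Your argument is correct and is essentially the paper's own proof: reduce to $\T$-monomials, use Kaplansky density to replace each tail coefficient by a bounded SOT-approximant built from generators $x_k$ with $k>N$, apply Corollary~\ref{6.7} to the resulting reduced word, and pass to the limit using the normality of $E$. The only packaging difference is that the paper isolates the step ``coefficients in $\A_{N+1}$ $\Rightarrow$ factorization'' as a separate Lemma~\ref{6.8} before invoking it here, and writes each $p_j$ so that it ends in an $X$-power (making every block boundary automatically a low-index/high-index transition), which is why it does not need your disjoint-range precaution for the coefficient approximants.
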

\begin{proof}
It suffices to prove the statement in the case: $p_1,...,  p_n$ are $\T$-monomials but none of them is an element of $\T$. 
Assume that 
$$  p_i(X)=b_{i,0}X^{t_{i,1}}b_{i,1}X^{t_{i,2}}b_{i,2}\cdots X_{t_i}^{k_i},$$ 
where $b_{i,j}\in \T$ and $t_{i,j}'s$ are positive integers. Let $N=\max\{i_1,...,i_n\}$, then $b_{i,j}\in \T\subset \overline{\A_{N+1}}^{w^*}$. By the Kaplansky theorem, for every $b_{i,j}$, there exists a bounded sequence $\{b_{l,i,j}\}_{\l\in\mathbb{N}}$ such that $b_{l,i,j}$ converges to $b_{i,j}$ in strong operator topology (SOT). Let $p_{n,i}(X)=b_{n,i,0}X^{t_{i,1}}b_{n,i,1}X^{t_{i,2}}b_{n,i,2}\cdots X_{t_i}^{k_i}$, then $p_{l,k}(x_{i_k})$ converges to $p_k(x_{i_k})$ in SOT. By the normality of $E$, we have 
$$E[p_1(x_{i_1})\cdots p_l(x_{i_m})\cdots p_n(x_{i_n})]=w^*-\lim\limits_{l\rightarrow \infty} E[p_{l,1}(x_{i_1})\cdots p_{l,m}(x_{i_m})\cdots p_{l,m}(x_{i_n})].$$ 
By lemma \ref{6.8}, we have
$$E[p_{l,1}(x_{i_1})\cdots p_{l,m}(x_{i_m})\cdots p_{l,m}(x_{i_n})]=E[p_{l,1}(x_{i_1})\cdots E[p_{l,m}(x_{i_m})]\cdots p_{l,m}(x_{i_n})].$$
It follows that $E[p_{l,m}(x_{i_m})]$ converges to $E[p_{m}(x_{i_m})]$ in WOT. Therefore, $p_{l,1}(x_{i_1})\cdots E[p_{l,m}(x_{i_m})]\cdots p_{l,m}(x_{i_n})$ converges to $p_{1}(x_{i_1})\cdots E[p_{m}(x_{i_m})]\cdots p_{m}(x_{i_n})$ in WOT. Now, we have
$$
\begin{array}{rcl}
&&E[p_1(x_{i_1})\cdots p_l(x_{i_m})\cdots p_n(x_{i_n})]\\
&=&w^*-\lim\limits_{l\rightarrow \infty} E[p_{l,1}(x_{i_1})\cdots p_{l,m}(x_{i_m})\cdots p_{l,m}(x_{i_n})]\\
&=&w^*-\lim\limits_{l\rightarrow \infty} E[p_{l,1}(x_{i_1})\cdots E[p_{l,m}(x_{i_m})]\cdots p_{l,m}(x_{i_n})]\\
&=&E[p_{1}(x_{i_1})\cdots E[p_{m}(x_{i_m})]\cdots p_{m}(x_{i_n})],\\
\end{array}
$$
the last equality follows $E$'s WOT continuity.
\end{proof}

\section{Main theorem and examples}
\subsection{non-unital tail algebra case}
Now, we can state and prove our main theorem for the non-unital tail algebra case:
\begin{theorem}
Let $(\A,\phi)$ be a $W^*$-probability space and $(x_i)_{i\in\mathbb{N}}$ be an infinite sequence of selfadjoint random variables. Suppose $\A$ is the WOT closure of the non-unital algebra generated by $(x_i)_{i\in\mathbb{N}}$ and $\phi$ is non-degenerated. Then the following statements are equivalent:
\begin{itemize}
\item[a)] The joint distribution of $(x_i)_{i\in \mathbb{N}}$  satisfies the invariance conditions associated with the linear coactions of the quantum semigroups $\B_s(n)$'s.

\item[b)]  The sequence $(x_i)_{i\in\mathbb{N}}$ is identically distributed and boolean independent with respect to a $\phi-$preserving normal conditional expectation $E$ onto the non-unital tail algebra $\T$ of the sequence $(x_i)_{i\in\mathbb{N}}$
\end{itemize}
\end{theorem}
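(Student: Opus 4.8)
The plan is to establish the two implications separately, with almost all of the work concentrated in (a)$\Rightarrow$(b); the reverse implication is a direct specialization of the invariance result already proved for finite boolean independent families. Throughout I use that in our $W^*$-setting $\A$ is unital, and that the non-degeneracy of $\phi$ makes its GNS representation faithful.

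For (b)$\Rightarrow$(a) I would specialize Theorem~\ref{4.5}. Assuming (b), for each fixed $n$ the finite subfamily $(x_1,\dots,x_n)$ is identically distributed and boolean independent with respect to the conditional expectation $E\colon\A\to\T$. Taking $\B=\T$ and the functional $\phi|_{\T}$, the associated functional $\bar\phi=\phi|_{\T}\circ E$ coincides with $\phi$ precisely because $E$ is $\phi$-preserving. Theorem~\ref{4.5} then yields that $\mu_{x_1,\dots,x_n}$ is invariant under the linear coaction of $\B_s(n)$, and since $n$ is arbitrary this is exactly condition (a).

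For (a)$\Rightarrow$(b) I would first harvest classical exchangeability from (a): by the Proposition of Section~3 the boolean invariance condition is stronger than invariance under Wang's quantum permutations $\A_s(n)$, and quantum exchangeability implies classical exchangeability, so $(x_i)_{i\in\mathbb{N}}$ is exchangeable. This is the hypothesis that powers the Section~6 construction. Lemma~\ref{E} then supplies the shift $C^*$-isomorphism $\alpha$ with $\alpha(x_i)=x_{i+1}$; the subsequent lemmas produce $E=w^*\text{-}\lim_n\alpha^n$ on $\A_0$, extend it normally to all of $\A$, and verify that $E$ is a $\phi$-preserving normal conditional expectation onto the non-unital tail algebra $\T$ (here $\A=\overline{\A_0}^{w^*}$ guarantees $1_\A\in\overline{\A_0}^{w^*}$, so the standing assumption of Section~6 holds). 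With $E$ in hand having all the asserted properties, only the two distributional statements remain. Boolean independence over $\T$ I would obtain by induction on the number of factors: given $i_1\neq\cdots\neq i_n$ and $p_1,\dots,p_n\in\T\langle X\rangle_0$, apply the factorization Proposition~\ref{7.1} to the first factor to replace $p_1(x_{i_1})$ by $E[p_1(x_{i_1})]\in\T$, and then use the left module property $E[b\,y]=b\,E[y]$ for $b\in\T$ to extract it, giving $E[p_1(x_{i_1})\cdots p_n(x_{i_n})]=E[p_1(x_{i_1})]\,E[p_2(x_{i_2})\cdots p_n(x_{i_n})]$; the induction hypothesis applies to the remaining product since $i_2\neq\cdots\neq i_n$, and unwinding yields $E[p_1(x_{i_1})]\cdots E[p_n(x_{i_n})]$. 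Identical distribution over $\T$ I would deduce from the shift-invariance $E\circ\alpha=E$ (immediate from the limit definition) together with $\alpha(x_i)=x_{i+1}$: on pure powers $E[x_i^k]=w^*\text{-}\lim_n x_{i+n}^k$ is visibly independent of $i$, and for a general $\T$-monomial one strips the outer tail coefficients by the module property, approximates the inner coefficients by elements of $\A_{N+1}$ via Kaplansky, reduces the resulting scalar moments to exchangeability relations through Corollary~\ref{c63}, and passes to the limit using the normality of $E$.

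The principal difficulty of the whole result lies in the (a)$\Rightarrow$(b) direction, and it has already been isolated into the preparatory results: building a $\phi$-preserving normal conditional expectation onto a tail algebra that need not contain $1_\A$ (Lemma~\ref{E} and the lemmas following it) and the single-factor factorization of Proposition~\ref{7.1}. These are precisely the places where the full boolean invariance condition, rather than mere exchangeability, is indispensable — exchangeability alone constructs $\alpha$ but the normal extension of $E$ and the factorization genuinely invoke the invariance under $\B_s(n)$, and without the boolean product structure would not follow. Granting them, the boolean-independence half collapses to the clean inductive pull-out above, so the only remaining delicate point internal to this proof is the identical-distribution claim for genuinely operator-valued coefficients, where the shift argument must be interwoven with the Kaplansky approximation and the normality of $E$.
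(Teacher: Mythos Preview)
Your proposal is correct and follows essentially the same route as the paper: (b)$\Rightarrow$(a) is reduced to Theorem~\ref{4.5}, and (a)$\Rightarrow$(b) is obtained by invoking the Section~6 construction of the normal $\phi$-preserving conditional expectation $E$ onto $\T$ and then iterating Proposition~\ref{7.1} with the leftmost factor to peel off one $E[p_j(x_{i_j})]$ at a time. The only difference is that you spell out the identical-distribution claim (via $E\circ\alpha=E$ and a Kaplansky approximation for $\T$-coefficients), whereas the paper's proof leaves this implicit in the shift-limit definition of $E$; this is a welcome bit of extra care rather than a divergence in strategy.
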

\begin{proof}
$a)\Rightarrow b)$: By choosing $m=1$ in proposition \ref{7.1}, we have
$$ 
\begin{array}{rcl}
&&E[p_1(x_{i_1})\cdots p_2(x_{i_2})\cdots p_n(x_{i_n})]\\
&=&E[E[p_1(x_{i_1})] p_2(x_{i_2})\cdots p_n(x_{i_n})]\\
&=&E[p_1(x_{i_1})] E[p_2(x_{i_2})\cdots p_n(x_{i_n})]\\
&&\cdots\\
&=&E[p_1(x_{i_1})]E[p_2(x_{i_2})]\cdots E[p_n(x_{i_n})],
\end{array}
$$
whenever $i_1\neq i_2\neq\cdots \neq i_n$, $p_1,...,p_n\in\T\langle X\rangle_0$\\
$b)\Rightarrow a)$ is a special case of theorem \ref{4.5}
\end{proof}

\subsection{Unital tail algebra case}
Let $(\A,\phi)$ be a $W^*$ probability space with a non-degenerated normal state $\phi$ and $(x_i)_{i\in\mathbb{N}}$ be a sequence of selfadjoint random variables. Suppose $\A$ is the WOT closure of the unital algebra generated $(x_i)_{i\in\mathbb{N}}$ and $\phi$ is non-degenerated. Again, we denote by $\A_0$ the non-unital algebra generated by $(x_i)_{i\in\mathbb{N}}$. Let $I_{\A}$ be the unit of $\A$, we have considered the case that $1_{\A}$ is contained in $\overline{\A_0}^{w^*}$. If $I_{\A}$ is not contained in $\overline{\A_0}^{w^*}$, denote by $I_1$ the unit of $\overline{\A_0}^{w^*}$. Then 
$$I_2=I_{\A}-I_1\neq 0$$ 
and 
$$\A=\C I_2\oplus \overline{\A_0}^{w^*}.$$ 
For all $x\in \overline{\A_0}^{w^*}$, we have $$I_2x=(I_{\A}-I_1)x=0.$$ Let $a\in\overline{\A_0}^{w^*}$ such that $\phi(xay)=0$ for all $x,y\in \overline{\A_0}^{w^*}$. For $\bar{x}, \bar{y}\in {\A}$,  there exist two constants $ c_1,c_2\in\C$ and $x,y\in\overline{\A_0}^{w^*}$ such that $x=c_1I_2+x$ and $y=c_2I_2+y$, then 
$$\phi(\bar{x}a\bar{y})=\phi(xab)=0,$$ Since our $\bar{x}, \bar{y}$ are chosen arbitrarily, we have $a=0$. Therefore, $(\overline{\A_0}^{w^*},\frac{1}{\phi(I_1)}\phi)$ is a $W^*-$ probability space with a non-degenerated normal state. Let $\A_{tail }$ be the unital tail algebra of $(x_i)_{i\in\mathbb{N}}$ in $(\A,\phi)$ and $\T$  be the non-unital tail algebra of $(x_i)_{i\in\mathbb{N}}$ in $(\overline{\A_0}^{w^*},\frac{1}{\phi(I_1)}\phi)$. Then, we have 
$$\A_{tail}=\bigcap\limits_{n=1}^\infty vN\{x_k|k\geq n\}=\bigcap\limits_{n=1}^\infty (W^*\{x_k|k\geq n\}+\C I_{\A})=\T+\C I_{\A}.$$

Since $\overline{\A_0}^{w^*}$ is a two-sided ideal of $\A$. For $\bar x\in\AT$, $\bar x=aI_{\A}+x$ for some $x\in\T$ and $a\in\C$. By theorem 7.2, there is a $\phi$ preserving normal conditional expectation $E$ from $\overline{\A_0}^{w^*}$ onto $\T$. As we proceeded  in section 7, we can extend this conditional expectation $E$ to an conditional expectation $\bar E$ which is  from the unitalization of $\overline{\A_0}^{w^*}$ to the unitalization of $\T$. The unitalizations of the two algebras  are  isomorphic to $\A$ and $\AT$, respectively. We have
\begin{lemma}
The conditional expectation $\bar E$ is $\phi$-preserving and normal.
\end{lemma}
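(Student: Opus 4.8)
The final lemma claims that the extension $\bar E$ of the conditional expectation $E \colon \overline{\A_0}^{w^*} \to \T$ to the unitalization—which is identified with the map $\A \to \AT$—is $\phi$-preserving and normal. The plan is to exploit the explicit direct-sum decomposition $\A = \C I_2 \oplus \overline{\A_0}^{w^*}$ established just above, together with the corresponding decomposition $\AT = \C I_{\A} + \T$, and to reduce every assertion about $\bar E$ to the already-established properties of $E$ on $\overline{\A_0}^{w^*}$.

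**$\phi$-preserving.** First I would write an arbitrary $\bar x \in \A$ as $\bar x = c I_2 + x$ with $c \in \C$ and $x \in \overline{\A_0}^{w^*}$, and record that, by the explicit formula for the unitalization $\bar E[(c, x)] = (c, E[x])$, one has $\bar E[\bar x] = c I_2 + E[x]$ under the identification of the unitalizations with $\A$ and $\AT$. Then $\phi(\bar E[\bar x]) = c\,\phi(I_2) + \phi(E[x])$. Since $E$ is $\phi$-preserving on $\overline{\A_0}^{w^*}$ by Theorem~7.2, $\phi(E[x]) = \phi(x)$, and since the $I_2$-components match trivially, I get $\phi(\bar E[\bar x]) = c\,\phi(I_2) + \phi(x) = \phi(c I_2 + x) = \phi(\bar x)$. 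Here I would be slightly careful to note that $\phi$, restricted to $\overline{\A_0}^{w^*}$ and normalized by $\phi(I_1)^{-1}$, is exactly the state with respect to which $E$ was built, so that the $\phi$-preserving property of $E$ transfers back after undoing the normalization; the normalization constant cancels because the identity on both sides carries the same scalar.

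**Normality.** For normality I would take a bounded net (or sequence) $\bar y_n = c_n I_2 + y_n$ in $\A$ converging to $\bar y = c I_2 + y$ in WOT, where $y_n, y \in \overline{\A_0}^{w^*}$ and $c_n, c \in \C$. Because the decomposition $\A = \C I_2 \oplus \overline{\A_0}^{w^*}$ is a WOT-closed direct sum and $I_2$ is a fixed projection orthogonal to $\overline{\A_0}^{w^*}$, WOT convergence of $\bar y_n$ forces both $c_n \to c$ (testing against a vector detecting the $I_2$-component) and $y_n \to y$ in WOT with $\{y_n\}$ bounded. Applying the normality of $E$ established earlier then gives $E[y_n] \to E[y]$ in WOT, and adding the scalar parts $c_n I_2 \to c I_2$ yields $\bar E[\bar y_n] = c_n I_2 + E[y_n] \to c I_2 + E[y] = \bar E[\bar y]$ in WOT. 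This is precisely normality of $\bar E$.

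**Main obstacle.** The routine algebra is immediate; the only genuinely delicate point is justifying that WOT convergence of the net $\bar y_n$ in $\A$ really decouples into separate WOT convergence of the scalar parts $c_n$ and the ideal parts $y_n$, and that the resulting net $\{y_n\}$ inherits boundedness so that the normality of $E$ (which was proved only for bounded WOT-convergent sequences in $\overline{\A_0}^{w^*}$) applies. I would handle this by using that $I_2 = I_{\A} - I_1$ is a central projection relative to this decomposition with $I_2 \overline{\A_0}^{w^*} = 0$, so multiplication by $I_2$ and by $I_1$ are WOT-continuous projections onto the two summands; this cleanly extracts $c_n I_2 = I_2 \bar y_n I_2$ and $y_n = I_1 \bar y_n I_1$ as WOT limits and preserves the uniform norm bound. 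Once that splitting is in place, both conclusions follow immediately from Theorem~7.2 and the normality lemma already proved for $E$.
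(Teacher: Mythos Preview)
Your overall strategy matches the paper's---decompose an element of $\A$ into a scalar part plus a piece of $\overline{\A_0}^{w^*}$ and reduce to the known properties of $E$---but there is a slip in how you identify the abstract unitalization with $\A$. The algebra isomorphism from the unitalization of $\overline{\A_0}^{w^*}$ onto $\A$ sends $(c,x)$ to $cI_\A + x$, \emph{not} to $cI_2 + x$: the map $(c,x)\mapsto cI_2+x$ is only a linear bijection, not multiplicative, since $I_2\,x = 0$ for every $x\in\overline{\A_0}^{w^*}$ whereas $(1,0)(0,x)=(0,x)$. Hence the correct transported formula is $\bar E[cI_\A + x] = cI_\A + E[x]$, which is precisely what the paper writes. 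In your $I_2$-coordinates this becomes $\bar E[cI_2 + x] = cI_2 + E[x] + c\bigl(I_1 - E[I_1]\bigr)$, and one does \emph{not} know that $E[I_1]=I_1$ in general: the map $E$ built in Section~6 need not be unital on $\overline{\A_0}^{w^*}$ (compare the non-unital example in \S7.3, where $E[I_\HH]=P_{e_0}\neq I_\HH$).

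Fortunately this does not break either conclusion. For $\phi$-preservation the spurious term has $\phi\bigl(c(I_1-E[I_1])\bigr)=0$ because $E$ preserves $\phi$, so your computation still gives $\phi(\bar E[\bar x])=\phi(\bar x)$. For normality the extra term is a fixed element multiplied by the convergent scalars $c_n$, so WOT convergence of $\bar E[\bar y_n]$ is unaffected. The paper avoids the issue altogether by decomposing along $I_\A$ instead of $I_2$; its $\phi$-preserving argument is then the one-line identity $\phi(aI_\A + E[x]) = a + \phi(x)$, and it simply declares normality ``obvious'' rather than writing out the projection-splitting argument you supply.
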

\begin{proof}
The normality is obvious, we just check the $\phi$-preserving condition here. Let $\bar x=aI_{\A}+x\in\A$ for some $x\in\overline{\A_0}^{w^*}$ and $a\in\C$,  we have 
$$\phi(E[\bar x]=\phi(E[aI_{\A}+x])=\phi(aI_{\A}+E[x])=a+\phi(E[x])=a+\phi(x).$$
The last equality follows the fact that $E$ is a ${\phi(I_1)}\phi$-preserving conditional expectation in $(\overline{\A_0}^{w^*},\frac{1}{\phi(I_1)}\phi)$.
\end{proof}
Together with proposition \ref{4.5} We have the following theorem for our unital case:

\begin{theorem}
Let $(\A,\phi)$ be a $W^*$-probability space and $(x_i)_{i\in\mathbb{N}}$ be a sequence of selfadjoint random variables. Suppose  the unit $I_{\A}$ of $\A$ is not contained in the WOT closure of the non-unital algebra generated by $(x_i)_{i\in\mathbb{N}}$ and $\phi$ is non-degenerated. Then the following statements are equivalent:
\begin{itemize}
\item[a)] The joint distribution of $(x_i)_{i\in \mathbb{N}}$  satisfies the invariance condition associated with the linear coactions of the quantum semigroups $\B_s(n)$'s.

\item[b)]  The sequence $(x_i)_{i\in\mathbb{N}}$ is identically distributed and boolean independent with respect to a $\phi-$preserving normal conditional expectation $E$ onto the unital tail algebra $\AT$ of the $(x_i)_{i\in\mathbb{N}}$. 
\end{itemize}
\end{theorem}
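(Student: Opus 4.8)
The plan is to deduce this from the non-unital tail algebra case already proved (Theorem 7.2), using the decomposition $\A=\C I_2\oplus\overline{\A_0}^{w^*}$ recorded above. This decomposition exhibits $\A$ as the unitalization of the ideal $\overline{\A_0}^{w^*}$, with $I_2=I_{\A}-I_1$ the adjoined unit annihilating $\overline{\A_0}^{w^*}$; likewise $\AT=\T+\C I_{\A}$ is the unitalization of $\T$, and $\bar E$ is the unitalization of the map $E$ furnished by Theorem 7.2. By the lemma just proved, $\bar E:\A\to\AT$ is a $\phi$-preserving normal conditional expectation, so $\bar E$ is already the map required in b).

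For $a)\Rightarrow b)$ I would first note that the invariance condition only constrains the mixed moments $\phi(x_{i_1}\cdots x_{i_k})$ with $k\geq1$, each of which lies in the ideal $\overline{\A_0}^{w^*}$, and that both sides of the identity $\mu_{x_1,\dots,x_n}(p)\p=(\mu_{x_1,\dots,x_n}\otimes id)(\L_n p)$ scale linearly with the functional. Hence the invariance condition for $(x_i)$ in $(\A,\phi)$ is equivalent to the invariance condition for $(x_i)$ in the $W^*$-probability space $(\overline{\A_0}^{w^*},\tfrac{1}{\phi(I_1)}\phi)$, where the state is non-degenerated and where the unit $I_1$ does belong to the WOT-closure of the non-unital algebra generated by $(x_i)$. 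Theorem 7.2 then applies directly and yields that $(x_i)$ is identically distributed and boolean independent over $\T$ with respect to $E$.

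The remaining and most delicate step is to upgrade boolean independence over the non-unital $\T$ to boolean independence over the unital $\AT$. For this I would use two observations: first, any $q\in\AT\langle X\rangle_0$ evaluated at some $x_i$ again lands in the ideal $\overline{\A_0}^{w^*}$, since each monomial carries at least one factor of $X$, and there $\bar E$ restricts to $E$; second, writing each coefficient $b\in\AT$ as $t+cI_{\A}$ with $t\in\T$, $c\in\C$, and using $I_{\A}x_i=x_i$, the value $q(x_i)$ equals $\tilde q(x_i)$ for some $\tilde q\in\T\langle X\rangle_0$. Thus both the factorization identity and the identical-distribution identities over $\AT$ reduce termwise to the ones over $\T$ supplied by Theorem 7.2. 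The genuine care is needed here in the book-keeping with the adjoined unit $I_{\A}$, since boolean independence is intrinsically a non-unital notion; this is where I expect the main obstacle to lie, the rest of the argument being a transcription of the non-unital case.

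The converse $b)\Rightarrow a)$ is immediate: with $\A$ unital, $\B=\AT$, $\bar E$ the $\phi$-preserving conditional expectation, and $(x_i)$ identically distributed and boolean independent over $\AT$, Theorem \ref{4.5} gives that the joint distribution of $(x_i)$ with respect to $\phi=\phi\circ\bar E$ is invariant under the linear coaction of $\B_s(n)$ for every $n$. Here one only needs to check that the hypotheses of Theorem \ref{4.5} hold, with the linear functional on $\AT$ taken to be $\phi|_{\AT}$.
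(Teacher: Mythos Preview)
Your proposal is correct and follows essentially the same route as the paper: reduce to the non-unital case on $(\overline{\A_0}^{w^*},\tfrac{1}{\phi(I_1)}\phi)$ via the decomposition $\A=\C I_2\oplus\overline{\A_0}^{w^*}$, unitalize $E$ to $\bar E$, and invoke Theorem~\ref{4.5} for the converse. The one place where you add substance beyond the paper is the ``upgrade'' of boolean independence from $\T$ to $\AT$; the paper simply states the theorem after constructing $\bar E$, whereas your observation that every $q\in\AT\langle X\rangle_0$ evaluated at $x_i$ equals $\tilde q(x_i)$ for some $\tilde q\in\T\langle X\rangle_0$ (since $I_\A x_i=I_1 x_i=x_i$) makes this step explicit and is the right way to see it.
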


\subsection{Examples}
To illustrate theorem 7.1 and theorem 7.3, we provide two examples here. For the details of the examples, see \cite{CF} and \cite{Fi}.\\
{\bf Non-unital case} 
Let $\HH$ be a Hilbert space with orthonormal basis $\{e_i\}_{i\in\mathbb{N}\cup \{0\}}$, we define a sequence of operators $\{x_n\}_{n\in\mathbb{N}}$ as follows:
$$x_n e_0=e_n, \text{and}\,\,\ x_ne_i=\delta_{n,i}e_0 \,\,\text{for}\,\,\,i\in\mathbb{N}.$$
Let $\A$ be the von Neumann algebra generated by $\{x_n\}_{n\in\mathbb{N}}$, then $e_0$ is cyclic for $\A$. Since $\A$ is WOT closed and  contains all finite-rank operators, $\A$ is actually $B(\HH)$. Let $\phi$ be the vector state $\phi(\cdot)=\langle \cdot e_0, e_0 \rangle$, then we can easily check that the random variables  $x_i$'s are identically distributed and boolean independent. The tail algebra is  $\C P_{e_0}$ which does not contain the unit of $B(\HH)$. The conditional expectation  $E$ is given by the following formula:
$$E[x]=P_{e_0}xP_{e_0},$$
for all $x\in \A.$\\
{\bf Unital case} 
Let $\HH_1=\HH\oplus \C e_{-1}$ be the direct sum of the Hilbert space $\HH$ with orthonormal basis $\{e_i\}_{i\in\mathbb{N}\cup \{0\}}$ and $\C P_{e_{-1}}$. As we constructed in the previous example, we define a sequence of operators $\{x_n\}_{n\in\mathbb{N}}$ as follows: 
$$x_n e_0=e_n, \text{and}\,\,\ x_ne_i=\delta_{n,i}e_0 \,\,\text{for}\,\,\,i\in\mathbb{N}.$$ Let $\A$ be the von Neumann algebra generated by $\{x_n\}_{n\in\mathbb{N}}$, then $\A=B(\HH)\oplus \C P_{e_{-1}}$. Therefore, the WOT-closure of the non-unital algebra generated by   $\{x_n\}_{n\in\mathbb{N}}$ is $B(\HH)\oplus 0$ but not the entire algebra $\A$. 
Let $\phi$ be the vector state $\phi(\cdot)=\frac{1}{2}\langle \cdot (e_0+e_{-1}), e_0+e_{-1}\rangle$, then  the random variables  $x_i$'s are identically distributed and boolean independent. The unital tail algebra is  $\C I_{\HH}\oplus\C P_{e_0}$ which contains the unit of $B(\HH_1)$. The conditional expectation  $E$ is given by the following formula:
$$E[x]=P_{e_0}xP_{e_0}+\langle x e_{-1}, e_{-1}\rangle (I_{\HH_1}-P_{e_{-1}}),$$
for all $a\in\A$.

\subsection{On $W^*$-probability spaces with faithful states}

If we restrict the invariance condition for boolean independence to a $W^*$-probability space with a faithful state, then we will have the following:
\begin{theorem}
Let $(\A,\phi)$ be a $W^*$-probability space and $(x_i)_{i\in\mathbb{N}}$ be a sequence of selfadjoint random variables such that $\A$ is generated by  $(x_i)_{i\in\mathbb{N}}$ and $\phi$ is faithful. Then the following statements are equivalent:
\begin{itemize}
\item[a)] The joint distribution of $(x_i)_{i\in \mathbb{N}}$  satisfies the invariance condition associated with the linear coactions of the quantum semigroups $\B_s(n)$'s.

\item[b)]  $x_i=x_j$ for all $i,j\in \mathbb{N}$
\end{itemize}
\end{theorem}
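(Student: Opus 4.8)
The plan is to prove the two implications separately, treating $b)\Rightarrow a)$ as the routine direction and locating the real content in $a)\Rightarrow b)$.

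For $b)\Rightarrow a)$ I would take the tail algebra to be all of $\A$ and the conditional expectation to be $\mathrm{id}_{\A}$. With $E=\mathrm{id}_{\A}$ the (constant) sequence is trivially identically distributed and boolean independent over $\A$: for $p_1,\dots,p_n\in\A\langle X\rangle_0$ and $i_1\neq\cdots\neq i_n$ one has $E[p_1(x_{i_1})\cdots p_n(x_{i_n})]=p_1(x)\cdots p_n(x)=E[p_1(x_{i_1})]\cdots E[p_n(x_{i_n})]$, where $x$ is the common value. Applying Theorem \ref{4.5} with $\B=\A$ and $\bar\phi=\phi\circ E=\phi$ then yields, for every $n$, the invariance of the joint distribution under the linear coaction of $\B_s(n)$, which is precisely $a)$.

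For $a)\Rightarrow b)$ the idea is to feed the single hypothesis into two machines and compare. Since $\phi$ is faithful it is non-degenerated, so the main theorem of this section applies and gives that $(x_i)_{i\in\mathbb{N}}$ is identically distributed and boolean independent over the tail algebra with respect to the $\phi$-preserving normal conditional expectation $E$. On the other hand, because $A_s(n)$ is a quotient of $\B_s(n)$, the Proposition of Section 3 forces the sequence to be quantum exchangeable; as $\phi$ is faithful, the K\"ostler--Speicher free de Finetti theorem \cite{KS} then shows that $(x_i)_{i\in\mathbb{N}}$ is also freely independent with amalgamation over the same tail algebra, with respect to the same shift-limit conditional expectation $E$. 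Thus over one $E$ the variables are simultaneously boolean and free.

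The heart of the argument exploits this coincidence. Writing $b=E[x_i]$ (independent of $i$ by identical distribution) and $\mathring x_i=x_i-b$, I would compute $E[x_i\,y\,x_i]$ for $y$ in the algebra $\A_{\hat i}$ generated by $\{x_k:k\neq i\}$ in two ways. Freeness of $x_i$ from $\A_{\hat i}$ over the tail gives $E[x_i y x_i]=E[\mathring x_i\,E[y]\,\mathring x_i]+b\,E[y]\,b$, whereas boolean independence applied to alternating words such as $x_i p(x_j)x_i$ and $x_i p(x_j)q(x_k)x_i$ gives $E[x_i y x_i]=b\,E[y]\,b$. Comparing, $E[\mathring x_i\,c\,\mathring x_i]=0$ for all $c\in E[\A_{\hat i}]$. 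Since the tail sits inside $\overline{\A_{\hat i}}^{\,w^*}$ and $E$ is normal and fixes the tail, the WOT-closed span of $E[\A_{\hat i}]$ is the whole tail algebra; as $c\mapsto E[\mathring x_i\,c\,\mathring x_i]$ is normal, the relation extends to $c=1$, giving $E[\mathring x_i^{\,2}]=E[x_i^2]-E[x_i]^2=0$. Faithfulness of $\phi$ then forces $\mathring x_i=0$, i.e. $x_i=b\in\T$ for every $i$, whence $x_i=x_j$ for all $i,j$.

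I expect the main obstacle to be the bookkeeping of the conditional expectations rather than the algebra: one must verify that the tail algebra and $E$ delivered by the boolean main theorem coincide with those of the K\"ostler--Speicher theorem (both arise as the same $w^*$-limit of shifts, so under faithfulness this should reduce to reconciling the unital and non-unital tail descriptions), and that the normality and density step genuinely permits setting $c=1$ in $E[\mathring x_i\,c\,\mathring x_i]=0$. Once $E[\mathring x_i^{\,2}]=0$ is established, the conclusion is immediate from faithfulness, since $\phi\big((x_i-x_j)^2\big)=2\,\phi\big(E[x_1^2]-E[x_1]^2\big)=0$.
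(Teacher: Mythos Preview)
Your proposal is essentially correct but follows a genuinely different route from the paper, especially for $a)\Rightarrow b)$.

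For $b)\Rightarrow a)$ the paper simply verifies the invariance identity by hand, using that all $x_{j_l}$ equal a common $x$ together with $\sum_{j_1,\dots,j_k}\p u_{j_1,i_1}\cdots u_{j_k,i_k}\p=\p$. Your appeal to Theorem~\ref{4.5} with $\B=\A$ and $E=\mathrm{id}_\A$ is a legitimate shortcut and avoids rewriting that computation.

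For $a)\Rightarrow b)$ the paper does \emph{not} invoke K\"ostler--Speicher at all. It uses only the boolean de Finetti theorem to obtain the conditional expectation $E$, then computes directly
\[
\phi\big((x_1-x_2)x_k^2(x_1-x_2)\big)=\phi\big(E[x_1-x_2]\,E[x_k^2]\,E[x_1-x_2]\big)=0
\]
for $k>2$, so faithfulness gives $(x_1-x_2)x_k=0$, hence $(x_1-x_2)z=0$ for all $z\in\overline{\A_2}^{w^*}$. The proof concludes by showing the unit $I_2$ of $\overline{\A_2}^{w^*}$ equals the unit $I_0$ of $\overline{\A_0}^{w^*}$ (via $\phi((I_n-I_{n+1})^2)=\phi(I_n)-\phi(\alpha(I_n))=0$ and faithfulness), so $x_1-x_2=(x_1-x_2)I_0=(x_1-x_2)I_2=0$.

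Your approach---forcing simultaneous free and boolean independence over the same $E$ and deducing $E[\mathring x_i^{\,2}]=0$---is conceptually appealing, and your freeness/boolean comparison $E[x_iyx_i]=E[\mathring x_iE[y]\mathring x_i]+bE[y]b$ versus $bE[y]b$ is correct for $y$ in the non-unital algebra $\A_{\hat i}$. The two caveats you flag are real but resolvable: the shift-limit construction of $E$ is literally the same in both this paper and \cite{KS}, so the conditional expectations coincide; and your density step (getting $c=1$ into $E[\mathring x_i\,c\,\mathring x_i]=0$) ultimately rests on the same fact the paper uses, namely that faithfulness forces $I_n=I_0$ for all $n$, so $I_0\in\T$ and one may take $c=I_0$ (noting $\mathring x_iI_0\mathring x_i=\mathring x_i^{\,2}$). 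In short, your argument is valid but imports the external K\"ostler--Speicher theorem and still needs the paper's ``equal units'' observation, whereas the paper's proof is entirely self-contained and shorter.
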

\begin{proof} 

$b)\Rightarrow a)$: If  $x_i=x_j$ for all $i,j\in \mathbb{N}$, given a  monomial $p=X_{i_1}\cdots X_{i_k}\in\C\langle X_1,...,X_n\rangle$, then
$$
\begin{array}{rcl}
\mu_{x_1,...x_n}(X_{i_1}\cdots X_{i_k})\p&=&\phi(x_{i_1}\cdots x_{i_k})\p\\
&=&\phi(x_1^k)\p\\
&=& \sum\limits_{j_1,...,j_k=1}^n \phi(x_1^k)\pi(\p u_{j_1,i_1} \cdots u_{j_k,i_k}\p)\\
&=&\sum\limits_{j_1,...,j_k=1}^n \phi(x_{j_1}\cdots x_{j_k})\p u_{j_1,i_1}\cdots {j_k,i_k}\p\\
&=&\mu_{x_1,...x_n}\otimes id_{\B_s(n)}(\L p).\\
\end{array}
$$
$b)\Rightarrow a)$: It is sufficient to show that $x_1=x_2$. By theorem 7.1 and 7.3, there exists a $\phi-$preserving conditional expectation $E$ maps $\A$ to its unital or non-unital tail algebra such that $(x_i)_{i\in\mathbb{N}}$ is identically boolean independent with respect to $E$. For $k\in\mathbb{N}$ and $k>2$, we have 
$$
\begin{array}{rcl}
&&\phi((x_1-x_2)x_k((x_1-x_2)x_k)^*)\\
&=&\phi((x_1-x_2)x^2_k(x_1-x_2))\\
&=&\phi(E[(x_1-x_2)x^2_k(x_1-x_2)])\\
&=&\phi(E[x_1-x_2]E[x^2_k]E[x_1-x_2])\\
&=&0.
\end{array}
$$
Since $\phi$ is faithful, we get 
$$(x_1-x_2)x_k=0$$
 for all $k> 2$. Let $\A_n$ be the WOT closure of the non-unital algebra generated by $\{x_k|k> n\}$, then
 we have $$(x_1-x_2)x=0$$
  for all $x\in \A_k$ . Notice that $(x_i)_{i\in\mathbb{N}}$ is exchangeable, by the construction of proposition 4.2 in \cite{KS} , there exists a normal $\phi$-preserving  homomorphism $\alpha: \A_n\rightarrow \A_{n+1}$ such that $\alpha(x_i)=x_{i+1}$. Denote by $I_n$ the unit of $\A_n$, then $\alpha(I_n)=I_{n+1}$ and  $I_nI_{n+1}=I_{n+1}$, since $I_{n+1}$ is a projection in $\A_n$. Then, we have
$$\phi((I_n-I_{n+1})^2)=\phi(I_n-I_{n+1})=\phi(I_n)-\phi(\alpha(I_n))=0,$$
which implies that $I_n=I_{n+1}$. It follows that $$I_0=I_1=I_2.$$ Therefore,
$$0=(x_1-x_2)I_2=(x_1-x_2)I_0=x_1-x_2.$$

\end{proof}

\section{Two more kinds of distributional symmetries}
Since $\C\langle X_1,...,X_n\rangle$ is an algebra which is freely generated by $n$ indeterminants $X_1,...,X_n$. It would be natural to define  coactions of the quantum semigroups $\B_s(n)$ on $\C\langle X_1,...,X_n\rangle$ as an algebraic homomorphism  but not only a linear map.  In this section, we will study the probabilistic symmetries associated with  some algebraic coactions of the quantum semigroups $\B_s(n)$'s and $B_s(n)$'s on $\C\langle X_1,...,X_n\rangle$. We we will define the invariance condition for the joint distribution of a sequence of noncommutative random variables in a similar form as we did in previous sections.

Now, let us consider $\C\langle X_1,...,X_n\rangle$ as an algebra and define a coaction of the quantum semigroups $\B_s(n)$ to be a homomorphism $$\L'_n:\C\langle X_1,...,X_n\rangle\rightarrow\C\langle X_1,...,X_n\rangle\otimes\B_s(n)$$ by the following formulas:
$$\L'_n(1)=1\otimes I,\,\,\,\L'_n(X_i)=\sum\limits_{k=1}^n X_k\otimes \p u_{k,i}\p.$$
Then, we would have 
$$L_n(X_{i_1}\cdots X_{i_k})=\sum\limits_{j_1,...j_k=1}^n X_{j_1}\cdots X_{j_k}\otimes  \p u_{j_1,i_1}\p \cdots \p u_{j_n,i_n}\p$$
and $$(\L'_n\otimes id_{B_s(n)})\L'_n=(id_{\C_n} \otimes \Delta)\L'_n.$$
We will call $\L'_n$ the algebraic coaction of $\B_s(n)$ on $\C\langle X_1,...,X_n\rangle$.
The invariance condition is so strong that we can get our conclusion in some  finitely generated probability spaces.

\begin{proposition}
Let $(\A,\phi)$ be a $W^*$-probability space with a non-degenerated state $\phi$, Fixed $n\in\mathbb{N}$, let $(x_i)_{i=1,...,n}$ be a sequence of selfadjoint noncommutative random variables in $\A$. We say the joint distribution of $(x_i)_{i=1,...,n}$ is invariant under the algebraic coaction $\L'_n $ of $\B_s(n)$ if 
$$\mu_{x_1,...,x_n}(p)\p=\mu_{x_1,...,x_n}\otimes id_{\B_s(n)}(\L'_n(p)),$$
for all $p\in\C\langle X_1,...,X_n\rangle$, where $\mu_{x_1,...,x_n}$ is the joint distribution of  $(x_i)_{i=1,...,n}$. If  is the WOT closure of the unital algebra generated by $(x_i)_{i=1,...,n}$, then
the joint distribution of $(x_i)_{i=1,...,n}$ is invariant under the algebraic coaction $\L'_n$ of $\B_s(n)$ is equivalent to  $x_1=x_2=\cdots=x_n$.
\end{proposition}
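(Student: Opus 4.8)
The plan is to prove both implications, treating the implication $x_1=\cdots=x_n\Rightarrow$ invariance as a short direct computation and the converse as the substantial part. For the easy direction, write $x$ for the common value of the $x_i$. Then $\mu_{x_1,\dots,x_n}(X_{j_1}\cdots X_{j_k})=\phi(x^k)$ for \emph{every} choice of $j_1,\dots,j_k$, so for a monomial $p=X_{i_1}\cdots X_{i_k}$ one gets
$$\mu_{x_1,\dots,x_n}\otimes id\,(\L'_n(p))=\phi(x^k)\sum_{j_1,\dots,j_k=1}^n \p u_{j_1,i_1}\p\cdots\p u_{j_k,i_k}\p.$$
The key is that this sum collapses to $\p$: summing over $j_k$ first and using the defining relation $\sum_{l}u_{l,i}\p=\p$ together with $\p^2=\p$ gives $\sum_{j_k}\p u_{j_k,i_k}\p=\p$, and peeling off the indices from the right one at a time yields $\sum_{j_1,\dots,j_k}\p u_{j_1,i_1}\p\cdots\p u_{j_k,i_k}\p=\p$. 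Hence the right-hand side equals $\phi(x^k)\p=\mu(p)\p$, the invariance condition.

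For the forward direction I would feed the invariance identity into the concrete representation $\pi$ of $B_s(n)$ on $\C^{2n}$ from Lemma \ref{4.1}, for which $\pi(\p)=P$ and $PP_{i,j}P=\tfrac1n P$ for all $i,j$ (the collapse already exploited in the proof of Lemma \ref{6.3}). Since every block $PP_{j,i}P$ equals $\tfrac1n P$ and consecutive factors are joined by $P\cdot P=P$, an induction on $k$ gives
$$\pi\bigl(\p u_{j_1,i_1}\p\cdots\p u_{j_k,i_k}\p\bigr)=PP_{j_1,i_1}P\cdots PP_{j_k,i_k}P=\frac{1}{n^k}P,$$
crucially independent of $j_1,\dots,j_k$. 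Applying $\pi$ to $\mu(p)\p=\mu\otimes id(\L'_n p)$ for $p=X_{i_1}\cdots X_{i_k}$ therefore collapses the entire right-hand side onto $P$ and, after cancelling the nonzero $P$, yields
$$\phi(x_{i_1}\cdots x_{i_k})=\frac{1}{n^k}\sum_{j_1,\dots,j_k=1}^n\phi(x_{j_1}\cdots x_{j_k}).$$
The right-hand side is independent of $i_1,\dots,i_k$, so every mixed moment of a fixed length $k$ equals the same constant; that is, $\phi(x_{i_1}\cdots x_{i_k})$ does not depend on the choice of indices.

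From this moment rigidity I would deduce operator equality. Fix $i\neq j$. For monomials $a=x_{l_1}\cdots x_{l_r}$ and $b=x_{m_1}\cdots x_{m_s}$ in the generators, $\phi(ax_ib)$ and $\phi(ax_jb)$ are both mixed moments of length $r+s+1$ and hence coincide, so $\phi\bigl(a(x_i-x_j)b\bigr)=0$. By linearity this holds for all $a,b$ in the unital $*$-algebra generated by the selfadjoint $x_i$, which is WOT-dense in $\A$. Approximating arbitrary $a,b\in\A$ by bounded nets from this algebra via Kaplansky's density theorem, using joint strong continuity of multiplication on bounded sets, and invoking the normality of $\phi$, I extend this to $\phi\bigl(a(x_i-x_j)b\bigr)=0$ for all $a,b\in\A$; non-degeneracy of $\phi$ then forces $x_i=x_j$. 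The main obstacle is exactly the forward direction, in two spots where the projection/non-unital structure must be handled correctly: first, pinpointing the representation in which the algebraic-coaction word $\p u_{j_1,i_1}\p\cdots\p u_{j_k,i_k}\p$ collapses to a single scalar multiple of $P$ \emph{uniformly} in the $j$'s (this uniformity is precisely what forces all length-$k$ moments to agree and is what distinguishes the algebraic coaction from the linear one), and second, the limiting step that upgrades moment equalities to an operator identity through non-degeneracy rather than faithfulness, where one must use joint strong continuity on bounded sets so that normality of $\phi$ applies.
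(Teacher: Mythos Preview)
Your proposal is correct and follows essentially the same route as the paper: both use the concrete representation of Lemma~\ref{4.1} to collapse $\p u_{j_1,i_1}\p\cdots\p u_{j_k,i_k}\p$ to $\tfrac{1}{n^k}P$, deduce that all length-$k$ mixed moments coincide, and then invoke non-degeneracy. Your final step is actually a bit cleaner than the paper's—you go straight to $\phi(a(x_i-x_j)b)=0$ rather than through $\phi(a(x_i-x_j)^2 b)=0$—and you make the Kaplansky/normality passage to all of $\A$ explicit, which the paper leaves implicit.
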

\begin{proof}
Suppose $x_1=x_2=\cdots=x_n$. Let $p=X_{i_1}\cdots X_{i_m}\in\C\langle X_1,...,X_n\rangle$, then we have 
$$
\begin{array}{rcl}
& &\mu_{x_1,...,x_n}\otimes id_{\B_s(n)}(\L'_n(X_{i_1}\cdots X_{i_m}))\\
&=&\mu_{x_1,...,x_n}\otimes id_{\B_s(n)}(\sum\limits_{j_1,...j_m=1}^n X_{j_1}\cdots X_{j_m}\otimes \p u_{j_1,i_1}\p u_{j_2,i_2}\p\cdots\p u_{j_m,i_m} \p)\\
&=& \sum\limits_{j_1,...j_m=1}^n \phi(x_{j_1}\cdots x_{j_m}) \p u_{j_1,i_1}\p u_{j_2,i_2}\p\cdots\p u_{j_m,i_m} \p  \\
&=&  \sum\limits_{j_1,...j_m=1}^n \phi(x_1^m)\p u_{j_1,i_1}\p u_{j_2,i_2}\p\cdots\p u_{j_m,i_m} \p\\
&=&\phi(x_1^m)\p\\
&=&\mu_{x_1,...,x_n}(X_{i_1}\cdots X_{i_m})\p.\\
\end{array}
$$
Since $p$ is arbitrary, we proved $\Leftarrow$.\\
Suppose the joint distribution of $(x_i)_{i=1,...,n}$ is invariant under the algebraic coaction $\L'_n $. Let $\{v_1,...,v_{2n}\}$ be orthonormal basis of the standard $2n-$dimensional Hilbert space $\mathbb{C}^{2n}$ and denote $v_k=v_{k+2n}$ for all $k\in\mathbb{Z}$. Let $$P_{i,j}=P_{v_{2(i-j)+1}+v_{2(j-i)+2}}$$
and $$P=P_{v_1+v_2+\cdots+v_{2n}},$$
where $P_v$ is the orthogonal projection onto the one dimensional subspace generated by the  vector $v$ . By lemma \ref{4.1}, we have a representation $\pi$ of $\B_s(n)$ on $\C^{2n}$ defined by the following formulas:
$$\pi(\p)=P,\,\,\,\,\, \pi(\p u_{i_1,j_1}\cdots u_{i_k,j_k}\p)=PP_{i_1,j_1}\cdots P_{i_k,j_k}P$$
for all $i_1,j_1,...,i_k,j_k\in \{1,...,n\}$ and $k\in \mathbb{N}$.
In particular, we have 
$$\pi(\p u_{i,j}\p)=PP_{i,j}P=\frac{1}{n}P.$$
Let $\pi$ act on the invariance condition, we get 
$$
\begin{array}{rcl}
\phi(x_{i_1}\cdots x_k)P&=&\pi(\mu_{x_1,...x_n}(X_{i_1}\cdots X_{i_k})\p)\\
&=&\pi(\sum\limits_{j_1,...,j_k=1}^n \mu_{x_1,...x_n}\otimes id_{\B_s(n)}(X_{j_1}\cdots X_{j_k}\otimes \p u_{j_1,i_1}\p \cdots\p u_{j_k,i_k}\p))\\
&=&\sum\limits_{j_1,...,j_k=1}^n \phi(x_{j_1}\cdots x_{j_k})\pi(\p u_{j_1,i_1}\p \cdots\p u_{j_k,i_k}\p))\\
&=&\sum\limits_{j_1,...,j_k=1}^n \phi(x_{j_1}\cdots x_{j_k})\frac{1}{n^k}P,\\
\end{array}
$$
for all $i_1,i_k\in\{1,...n\}$. It implies that
$$\phi(x_{i_1}\cdots x_k)= \frac{1}{n^k}\sum\limits_{j_1,...,j_k=1}^n \phi(x_{j_1}\cdots x_{j_k}).$$

Therefore, two mixed moments are the same if their  degree are the same. Given two monomials $a=x_{s_1}\cdots x_{s_{k_1}}$ and $b=x_{s_1}\cdots x_{s_{k_2}}$ then
$$\phi(a(x_i-x_j)(x_i-x_j)^*b)=\phi(a(x_i-x_j)^2b)=\phi(ax_ix_ib)-\phi(ax_ix_jb)-\phi(ax_ix_jb)+\phi(ax_ix_ib)=0,$$
the last equation is true because all the monomials have the same degree. By some linear combinations, we have 
$$\phi(a(x_i-x_j)(x_i-x_j)^*b)=0,$$
for all $a,b\in\A_{[n]}$, where $\A_{[n]}$ is the unital algebra generated by $x_1,...x_n$, thus 
$$x_i=x_j,$$
for all $i\neq j$.
\end{proof}

In the end of this section, we study a coaction of the  quantum semigroups $B_s(n)$ on the joint distribution of a sequence of noncommutative random variables.
We can define a  coaction $$L_n:\C\langle X_1,...,X_n\rangle\rightarrow\C\langle X_1,...,X_n\rangle\otimes B_s(n)$$ of $B_s(n)$ on the algebra of noncommutative polynomials $\C\langle X_1,...,X_n\rangle$  by the following formulas:
$$L_n(1)=1\otimes I,\,\,\,\, L_n(X_i)=\sum\limits_{k=1}^n X_k\otimes u_{k,i},$$
where $1$ is the identity of $\C\langle X_1,...,X_n\rangle$ and $I$ is the identity of $B_s(n)$.
According to the definition of $L_n$, we have 
$$L_n(X_{i_1}\cdots X_{i_k})=\sum\limits_{j_1,...j_k=1}^nX_{j_1}\cdots X_{j_k}\otimes  u_{j_1,i_1}\cdots u_{j_n,i_n}.$$
One can easily check $$(L_n\otimes id_{B_s(n)})L_n=(id_{\C_n} \otimes \Delta)L_n, $$
where $id_{B_s(n)}$ and $id_{\C_n}$ are identity map of $B_s(n)$ and $\C\langle X_1,...,X_n\rangle$.\\
Then, we have  
\begin{proposition}
Let $(\A,\phi)$ be a probability space and $(x_i)_{i\in\mathbb{N}}$. The joint distribution is invariant under the coactions $L_n$'s of the quantum semigroups $B_s(n)$ if
for all $n$, $p\in \C\langle X_1,...,X_n\rangle$. Let $\mu_{x_1,...,x_n}$ be the joint distribution of $x_1,...,x_n$, $I$ be the unit of $B_s(n)$ and $L_n$ is the coaction on $\C\langle X_1,...,X_n\rangle$, we have 
$$\mu_{x_1,...,x_n}(p)I=\mu_{x_1,...,x_n}\otimes id(L_n(p)).$$
If the joint distribution of $(x_i)_{i\in\mathbb{N}}$ is invariant under the coactions $L_n$'s of the quantum semigroups $B_s(n)$'s, 
then $\phi(x_{i_1}x_{i_2}\cdots x_{i_k})=0$ for all $i_1,...,i_k\in \mathbb{N}$ and $k\in\mathbb{N}$.
\end{proposition}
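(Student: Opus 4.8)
The plan is to evaluate the invariance condition on an arbitrary monomial, read off the resulting identity in $B_s(n)$, and then transport it into a concrete matrix algebra via the representation of Lemma~\ref{4.1}, where a rank count forces the moment to vanish. Since a fixed monomial $x_{i_1}\cdots x_{i_k}$ involves only finitely many indices, I would first fix $n\geq\max\{i_1,\dots,i_k\}$ and apply the invariance condition $\mu_{x_1,\dots,x_n}(p)I=\mu_{x_1,\dots,x_n}\otimes id(L_n(p))$ to $p=X_{i_1}\cdots X_{i_k}$, obtaining the identity
$$\phi(x_{i_1}\cdots x_{i_k})\,I=\sum_{j_1,\dots,j_k=1}^n\phi(x_{j_1}\cdots x_{j_k})\,u_{j_1,i_1}u_{j_2,i_2}\cdots u_{j_k,i_k}$$
in $B_s(n)$. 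Unlike the $\B_s(n)$ situation, there are no invariant projections $\p$ separating the generators here, so the product cannot be telescoped directly; instead I would exploit that the left-hand side is a scalar multiple of the unit.

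Next I would apply the representation $\pi$ of $B_s(n)$ on $\C^{2n}$ from Lemma~\ref{4.1}, under which $\pi(I)=\mathbf{1}=I_{\C^{2n}}$ and $\pi(u_{j,i})=P_{j,i}$ is the rank-one orthogonal projection onto $e_{j,i}:=\tfrac{1}{\sqrt2}(v_{2(j-i)+1}+v_{2(i-j)+2})$. Since $\pi$ is a homomorphism, the identity becomes
$$\phi(x_{i_1}\cdots x_{i_k})\,I_{\C^{2n}}=\sum_{j_1,\dots,j_k=1}^n\phi(x_{j_1}\cdots x_{j_k})\,P_{j_1,i_1}P_{j_2,i_2}\cdots P_{j_k,i_k}.$$
The key observation is that each product $P_{j_1,i_1}\cdots P_{j_k,i_k}$ sends every vector into the line $\C e_{j_1,i_1}$ determined by its leftmost factor, because the last projection applied is $P_{j_1,i_1}$. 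Hence, summing over $j_1$ with $i_1$ held fixed, the entire right-hand side has range contained in $V_{i_1}:=\mathrm{span}\{e_{j,i_1}\mid j=1,\dots,n\}$. By the orthogonality computation in the proof of Lemma~\ref{4.1}, the $n$ vectors $e_{j,i_1}$ are orthonormal, so $\dim V_{i_1}=n$.

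Finally I would compare ranks. The right-hand side is an operator of rank at most $n$, whereas the left-hand side equals $\phi(x_{i_1}\cdots x_{i_k})\,I_{\C^{2n}}$, which is invertible of rank $2n$ as soon as the scalar is nonzero. As $n<2n$ for every $n\geq1$, the two sides can agree only when $\phi(x_{i_1}\cdots x_{i_k})=0$; since $k$ and the indices are arbitrary, this gives the vanishing of all mixed moments. The point I expect to require the most care is the range bound itself, namely verifying cleanly that a product of the rank-one projections $P_{j,i}$ always lands in the single line fixed by $i_1$, so that the ambient range stays $n$-dimensional regardless of $k$; once this is established the rank obstruction is immediate. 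It is worth noting that this argument relies only on the representation of Lemma~\ref{4.1} and uses neither exchangeability nor the tail-algebra constructions of the earlier sections.
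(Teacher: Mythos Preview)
Your argument is correct: pushing the invariance identity through the representation of Lemma~\ref{4.1} and observing that every summand $P_{j_1,i_1}\cdots P_{j_k,i_k}$ has range in the line $\C e_{j_1,i_1}$ does force the right-hand side to have rank at most $n$, and this is incompatible with a nonzero multiple of $I_{\C^{2n}}$. The range bound you flagged as the delicate point is indeed clean---it follows immediately from the fact that the leftmost factor $P_{j_1,i_1}$ is rank one.

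The paper, however, takes a much shorter route. Instead of the $2n$-dimensional representation of Lemma~\ref{4.1}, it uses the one-dimensional representation $\pi$ of $B_s(N)$ given by $\pi(I)=1$ and $\pi(u_{i,j})=\pi(\p)=0$. This is well defined by universality because the defining relations of $B_s(n)$ tie the column sums of the $u_{i,j}$ only to $\p$, not to $I$; sending every generator to $0$ satisfies all of them. Applying $\pi$ to the invariance identity then kills the right-hand side outright, giving $\phi(x_{i_1}\cdots x_{i_k})=0$ in one line. Your approach has the merit of reusing a representation already on hand and makes explicit a geometric obstruction, but the paper's trivial representation isolates the essential point more sharply: the conclusion holds simply because $B_s(n)$, unlike $A_s(n)$, admits a nonzero character annihilating all the $u_{i,j}$.
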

\begin{proof}
Let $k$ be a positive integer, $i_1,...,i_k\in\mathbb{N}$ and $N=\max\{i_1,...,i_k\}$. Take a trivial representation $\pi$ of $B_s(N)$ on a $1-$dimensional space $V$ defined by the following formulas:
$$\pi(I)=1,\,\,\,\text{and}\,\,\,\pi(u_{i,j})=\pi(\p)=0,$$ where $1$ is the identity of $V$. By the universality of $B_s(n)$, $\pi$ is well defined. According to the invariance condition, we have
$$\pi(\mu_{x_1,...,x_N}(p)I)=\mu_{x_1,...,x_N}\otimes \pi(L_n(p)),$$
for all $p\in\C\langle X_1,...,X_n\rangle$.
Let $p=X_{i_1}\cdots X_{i_k}$, we get 
$$\phi(x_{i_1}x_{i_2}\cdots x_{i_k})1=\sum\limits_{j_1,...j_k=1}^n \phi(x_{j_1}\cdots x_{j_k})\pi( u_{j_1,i_1}\cdots u_{j_n,i_n})=0,$$ 
which completes the proof.
\end{proof}

\noindent{\bf Acknowledgement} I would like to thank Dan-Virgil Voiculescu for his suggestions and supports while working on this project. I would also like to thank Claus K\"{o}stler and Francesco Fidaleo for helpful discussions and Hasebe for pointing out that the original framework does not contain pairs of  non-trivial boolean independent random variables. While working on this paper, the author was supported  in part by funds from NSF grant DMS-1001881 and DMS-1301727. The author was benefited a lot from attending the Focus Program on Noncommutative Distributions in Free Probability Theory at the Fields Institute at the University of Toronto in July of 2013. His travel expenses for this conference were funded by NSF grant DMS-1302713.

Department of Mathematics\\
University of California at Berkeley\\
Berkeley, CA 94720, USA\\
E-MAIL: weihualiu@math.berkeley.edu\\

\end{document}